\documentclass[11pt]{amsart}

\usepackage{amsmath, amsthm, amssymb}
\usepackage{amsfonts}
\usepackage{mathtools}

\usepackage[cal=boondox]{mathalfa}
\usepackage{bbm}
\usepackage{mathrsfs}
\usepackage{stmaryrd}
\usepackage{esint}
\usepackage[T1]{fontenc}
\usepackage{lmodern}

\usepackage{graphicx}
\usepackage{tikz}
\usepackage{tikz-cd}
\usetikzlibrary{arrows}
\usepackage[all,cmtip]{xy}
\usepackage{amscd}
\usepackage{picinpar}

\usepackage{enumerate}
\usepackage{enumitem}
\usepackage{comment}
\usepackage{array,booktabs,longtable,ragged2e}
\usepackage{extarrows}
\usepackage{verbatim}
\usepackage{calc}
\usepackage{xcolor}

\usepackage{hyperref}
\usetikzlibrary{calc}

\numberwithin{equation}{section}

\theoremstyle{plain}
\newtheorem{thm}{Theorem}
\numberwithin{thm}{subsection}
\newtheorem{cor}[thm]{Corollary}
\newtheorem{lem}[thm]{Lemma}
\newtheorem{prop}[thm]{Proposition}
\newtheorem{propdef}[thm]{Proposition/Definition}

\newtheorem{mainthm}{Theorem}

\theoremstyle{definition}
\newtheorem{defi}[thm]{Definition}

\newtheorem{example}[thm]{Example}

\theoremstyle{remark}
\newtheorem{rem}[thm]{Remark}

\newcolumntype{L}[1]{>{\RaggedRight\arraybackslash}p{#1}}
\DeclareMathOperator{\PGL}{PGL}

\DeclareMathOperator{\Pic}{Pic}

\DeclareMathOperator{\coh}{H}

\DeclareMathOperator{\Stab}{Stab}

\DeclareMathOperator{\Sing}{Sing}

\DeclareMathOperator{\Crit}{Crit}
\DeclareMathOperator{\Zar}{Zar}

\def\bC{{\mathbb{C}}}
\def\bD{{\mathbb{D}}}

\def\bZ{{\mathbb{Z}}}

\def\bP{{\mathbb{P}}}
\def\bN{{\mathbb{N}}}

\def\bG{{\mathbb{G}}}

\def\cO{{\mathcal{O}}}

\title{Postcritically finite endomorphisms I: Kobayashi hyperbolicity and tautness}

\author{Ruiran Sun}
 \address{School of Mathematical Sciences, Xiamen University, Xiamen 361005, China}
\email{ruiransun@xmu.edu.cn}
\author{Shengyuan Zhao}
\address{Universit\'e Paul Sabatier, Institut de Math\'ematiques de Toulouse, 118, route de Narbonne, F-31062 Toulouse, France}
\email{shengyuan.zhao@math.univ-toulouse.fr}

\begin{document}

\begin{abstract}
Let $f:\bP^N\to\bP^N$ be a postcritically finite endomorphism of degree at least two. Combining Yamanoi's hyperbolicity results with the dynamical property of $f$, we prove that the complement of its postcritical divisor is taut unless $f$ is a monomial power map or admits a nontrivial equivariant rational fibration with a polarized divisorially postcritically finite base map. We establish analogous alternatives for polarized PCF endomorphisms of rationally connected projective manifolds and, for Kobayashi hyperbolicity, on normal rationally connected projective varieties. In dimension two, the non-taut case can be described, up to iteration and rational semiconjugacy, by monomial power maps and skew products with monomial fiber maps.
\end{abstract}

\maketitle

\tableofcontents

\section{Introduction}

Let $N\geq 1$. A postcritically finite (PCF) endomorphism of $\bP^N$ is an endomorphism whose critical divisor has an algbraic forward orbit (this forward orbit is called the PCF divisor). A complex manifold $Y$ is called taut if every sequence of holomorphic maps $\mathbb{D}\to Y$ has a subsequence that either converges uniformly on compact subsets or is compactly divergent (see \cite{Wu}). A complex manifold is Kobayashi hyperbolic if, roughly speaking, holomorphic disks in it are sufficiently constrained that they cannot stretch arbitrarily in any complex direction (see \cite{Kobayashi1998}). These two notions of hyperbolicity are closely related; every taut complex manifold is Kobayashi hyperbolic. In this paper we study the hyperbolicity of the complement of the PCF divisor of a PCF endomorphism. Our motivation comes, on one hand, from the work of Fornaess--Sibony~\cite{FornaessSibonyKobayashi} where Kobayashi hyperbolicity of the PCF complement is used in holomorphic dynamics of PCF maps, on the other hand, from the construction by Koch~\cite{Koch} of interesting examples of PCF maps satisfying the Kobayashi hyperbolicity. 

In complex dimension one, postcritically finite rational maps form a classical and extensively studied part of holomorphic dynamics. From the viewpoint considered in our paper, however, the one-dimensional case is trivial because the complement of the postcritical set in $\bP^1$ is taut when the postcritical set contains at least three points; we refer to \cite{DKMpcfset} for finer information on the configuration of PCF divisors in dimension one. Our focus will therefore be on higher dimensions. 

Let $N\geq 1$. Let $f:\bP^N\to\bP^N$ be an endomorphism of degree at least $2$. Its critical locus is
\[
\Crit(f):=\{x\in\bP^N \mid \operatorname{rank}(d_xf)<N\}.
\]
Its postcritical set is
\[
P_f:=\bigcup_{n\geq 1} f^n(\Crit(f)).
\]
We say that $f$ is \emph{postcritically finite} (PCF) if $P_f$ is an algebraic subset of $\bP^N$. In this case $P_f$ is a divisor because $\Crit(f)$ is a divisor. By abuse of notation we denote by $P_f$ both the postcritical set and its associated reduced effective divisor.
Higher dimensional examples of PCF endomorphisms include certain monomial maps (see Example~\ref{ex:monomial-pcf} below), Lattès maps (see Section~\ref{sec:lattes-maps} and the references therein), maps induced by one dimensional PCF maps via operations such as fibration or symmetric product. There aren't many other known examples of higher dimensional PCF endomorphisms. There are the above mentionned examples of Fornaess--Sibony and Koch~\cite{FornaessSibonyCF,FornaessSibonyKobayashi,Koch} for which Kobayashi hyperbolicity of PCF divisor complement is known (One may also deduce tautness by a theorem of Kiernan~\cite{Kiernan}). A few other examples can be found in \cite{DabijaJonsson,Crass2007Klein,Crass2011Valentiner}. We refer also to \cite{Ueda,Jonsson,RongFatou,Astorg,Ji,Van1,Van2,GTV} for dynamics of PCF endomorphisms.

\begin{example}\label{ex:monomial-pcf}
The following map is a PCF endomorphism:
\[
f:[x_0:\cdots:x_N]\mapsto [\alpha_0 x_{\sigma(0)}^{d}:\cdots:\alpha_N x_{\sigma(N)}^d]
\]
with \(\alpha_0,\cdots,\alpha_N\in \bC^*\) and \(\sigma\in \mathfrak{S}_{N+1}\). The PCF divisor is the union of the $N+1$ coordinate hyperplanes. 
\end{example}

Our first main result relates tautness of the postcritical complement to the algebraic structure of the map. We first recall the terminology. An endomorphism $g:Y\to Y$ is \emph{polarized} if $g^*H\sim qH$ for an ample Cartier divisor $H$ and an integer $q\geq 2$. On a normal variety, it is \emph{divisorially PCF} if the forward images of its ramification divisor form a finite union of prime divisors. See Definition~\ref{def:divisorial-PCF-definition}.
\begin{mainthm}\label{mainthm:Projective-space-pcf}
Let \(f:\bP^N\to\bP^N\) be a PCF endomorphism with PCF set \(P_f\). If \(\bP^N\setminus P_f\) is not taut, then either \(f\) is a monomial power map in suitable homogeneous coordinates (see Example~\ref{ex:monomial-pcf}), or there exist a normal rationally connected projective variety \(Y\) of positive dimension strictly less than \(N\), a dominant rational map with connected fibers \(\pi:\bP^N\dashrightarrow Y\), and a polarized divisorially PCF endomorphism \(g:Y\to Y\) such that \(g\circ\pi=\pi\circ f\).
\end{mainthm}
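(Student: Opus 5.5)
Assume $X:=\bP^N\setminus P_f$ is not taut; we aim to produce either the monomial structure or the equivariant fibration. The plan has three steps: use the non-tautness to find entire curves in $X$, show their Zariski closures form an $f$-invariant algebraic object, and then use that object to obtain the fibration or the monomial map.

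\textbf{Step 1 (obtaining entire curves).} By Kiernan's theorem, tautness is equivalent to Kobayashi hyperbolicity for a complement of a divisor in a compact manifold once it is hyperbolically embedded. So failure of tautness gives one of two things. Either there is a nonconstant entire curve $\bC\to \bP^N\setminus P_f$, or there is a nonconstant entire curve in some stratum $D_I^\circ:=\bigcap_{i\in I}D_i\setminus\bigcup_{j\notin I}D_j$, where the $D_i$ are the components of $P_f$. The second case comes from Brody reparametrization at the boundary, using Green's criterion.

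\textbf{Step 2 (the exceptional locus is invariant and algebraic).} Let $\Sigma$ be the Zariski closure of the union of all entire curves in the relevant stratum. Since $f^{-1}(P_f)\supseteq P_f$ and $f$ is a finite unramified covering of $\bP^N\setminus f^{-1}(P_f)\to \bP^N\setminus P_f$, entire curves lift under $f$, and $f$ maps entire curves of $X$ to entire curves of $X$. Hence $\Sigma$ is totally invariant up to components.

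Yamanoi's results on the Green--Griffiths--Lang type statement then apply. Entire curves in complements of divisors in varieties of log general type (or with many components / maximal quasi-Albanese dimension) are algebraically degenerate. The Zariski closure of an entire curve in $X$ is therefore a proper subvariety, unless $(\bP^N,P_f)$ is not of log general type or the quasi-Albanese of $X$ is large.

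Invariant proper subvarieties of a polarized endomorphism which are totally invariant are finitely many. If instead these closures form a covering family, they give a rational fibration, for instance via the Stein factorization of a family / quotient by the family of Zariski closures. This fibration is $f$-equivariant by uniqueness, so $f$ descends to $g:Y\to Y$. $Y$ is rationally connected as the image of $\bP^N$, and $g$ is polarized because $\pi_*$ of an ample class is big and invariant up to scaling.

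\textbf{Step 3 (identifying the base and the monomial case).} We next show that $g$ is divisorially PCF. The ramification of $g$ is contained in the image of the ramification of $f$ away from multiple fibres, and those fibres are controlled by $P_f$. So the forward orbit of the ramification divisor of $g$ lies in $\pi(P_f)$ together with finitely many divisors.

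The degenerate remaining case is when the fibration is trivial, meaning $Y$ is a point or entire curves are Zariski dense. Then $(\bP^N,P_f)$ is not log general type with Zariski-dense entire curves. Yamanoi's theorem forces the quasi-Albanese to be dominant, so $X$ is essentially a torus $\bG_m^N$: $P_f$ has $N+1$ components forming a hyperplane configuration. A totally invariant union of $N+1$ hyperplanes in general position is exactly the monomial situation (Example~\ref{ex:monomial-pcf}), so $f$ is a monomial power map after a coordinate change.

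\textbf{Main obstacle.} I expect the hardest step to be Step 2. It needs the right version of Yamanoi's degeneracy: either a log general type complement has a proper exceptional locus, or a quasi-Albanese / fibration structure exists. It also needs that structure made equivariant, and the boundary strata handled inductively on dimension.
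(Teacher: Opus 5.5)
\textbf{Step 1 discards exactly the information the theorem needs.} Non-tautness only tells you that $\bP^N\setminus P_f$ is not hyperbolically embedded. By Green's criterion this gives an entire curve in $\bP^N\setminus P_f$ \emph{or} in some boundary stratum, and nothing forces the first alternative: the Brody limit of a non-normal sequence of discs may land in $P_f$.

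The boundary alternative cannot imply the conclusion. Take the paper's example $F[X:Y:Z]=[X^2-2YZ:Y^2-2XZ:Z^2]$. There $P_F=D\cup H_\infty$ with $D\cap H_\infty=\{[1:0:0],[0:1:0]\}$, so $H_\infty\setminus D\simeq\bC^*$ carries entire curves. Yet $\bP^2\setminus P_F$ is taut, $F$ is not monomial, and no iterate of $F$ preserves a rational pencil (Proposition~\ref{hyper-emb-counterexample}). Any argument that extracts a fibration or a monomial structure from boundary entire curves therefore proves something false.

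The paper never passes to entire curves at that stage. It keeps a non-normal sequence $\varphi_\nu:\bD\to U_0$ and lifts it through the finite étale tower $f^k:U_k\to U_0$, $U_k=\bP^N\setminus f^{-k}(P_f)$, to discs centred off an algebraic exceptional set; this uses Zariski density of backward orbits. It then applies Yamanoi's taut embedding modulo $S_{\overline W}$ to a generically finite map $U_M\to W$ into a torus with finite stabilizer, which yields uniform $H$-area bounds. After one more lift through a deep iterate $f^\ell$, it Brody-rescales. The resulting bubble has finite area, hence is a rational curve $g$, and $\deg(f^\ell\circ g)^*H\ge q^\ell$ contradicts the area bound. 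It is the polarization, not an analysis of where entire curves lie, that rules out bubbles anywhere, including in the boundary.

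\textbf{Steps 2--3 rest on unavailable or false claims and miss the source of the dichotomy.} Several of the claims you rely on do not hold as stated.
Degeneracy of entire curves in log-general-type complements is open in general. Yamanoi's theorem concerns subvarieties of semi-abelian varieties, so you need a map to a torus that is generically finite onto a log-general-type image; the quasi-Albanese of $U_0$ alone need not provide this.
Zariski closures of entire curves do not canonically form an $f$-equivariant fibration.
$f$ does not send entire curves of $X$ into $X$ when they meet $f^{-1}(P_f)\setminus P_f$.
Zariski-dense entire curves in a non-log-general-type complement do not make it a torus; $\bC^N$ is a counterexample.

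The paper's dichotomy comes instead from the number $r_n$ of components of $f^{-n}(P_f)$.
If $(r_n)$ is bounded, the tower stabilizes to a totally invariant $D_\infty\supset\Crit(f)$, so $f$ is étale off $D_\infty$. Meng--Zhong then makes $(\bP^N,D_\infty)$ toric, and $N+1\le\deg P_f\le\deg D_\infty=N+1$ (Corollary~\ref{cor:degree-recurrence}) gives the monomial form.
If $(r_n)$ is unbounded, the unit groups $\mathcal O(U_n)^*/\bC^*$ — the character lattices of the quasi-Albanese tori of the $U_n$ — have rank growing along the tower. Either these units and their $f$-pullbacks generate a field of transcendence degree $<N$, or they give the generically finite torus map used above. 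In the first case the relative algebraic closure $L$ satisfies $f^*L\subset L$. Nakayama's Chow reduction, Meng--Zhang's polarization descent and Proposition~\ref{prop:pcf-descent} then yield alternative (2). This is what makes $g$ an honest polarized endomorphism; your argument that $\pi_*$ of an ample class is big does not give that.
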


In fact, Theorem~\ref{mainthm:Projective-space-pcf} is obtained as a special case of the following:
\begin{mainthm}\label{mainthm:rationally-connected}
Let \(f\) be a divisorially PCF polarized endomorphism of a rationally connected complex projective manifold \(X\) of dimension \(N\) with PCF divisor \(P_f\). If \(X\setminus P_f\) is not taut, then one of the following holds: 
\begin{enumerate}
\item there exists a reduced divisor \(D_\infty\subset X\) containing the ramification locus of \(f\) such that \(f^{-1}(D_\infty)=D_\infty\), the pair \((X,D_\infty)\) is toric, and on the torus \(T=X\setminus D_\infty\) the map \(f\) is a translation composed with an isogeny of \(T\);
\item there exist a rationally connected normal projective variety \(Y\) of positive dimension strictly less than \(N\), a dominant rational map with connected fibers
\(\pi:X\dashrightarrow Y\), and a polarized divisorially PCF endomorphism \(g:Y\to Y\) such that \(g\circ\pi=\pi\circ f\).
\end{enumerate}
\end{mainthm}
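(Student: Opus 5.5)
We prove the contrapositive. Assume that $X\setminus P_f$ is not taut and that alternative (2) fails. We will show that alternative (1) holds.

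\textbf{Step 1: from non-tautness to a degenerate entire curve.} The first step uses the standard Brody--Zalcman type rescaling for complements of divisors in compact manifolds, in the form of Kiernan's criterion. If $X\setminus P_f$ is not taut, then it is either not hyperbolically embedded or it contains a Brody-type limit. In either case we obtain a nonconstant holomorphic map $\gamma:\bC\to X$. This map lands either in $X\setminus P_f$ or in a stratum $Z\setminus (P_f|_Z)$, where $Z$ is an intersection of components of $P_f$. Next we use the dynamics. Since $f^{-1}(P_f)\supseteq P_f\cup R_f$, the map $f:X\setminus f^{-1}(P_f)\to X\setminus P_f$ is a finite étale cover. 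Hence any entire curve in $X\setminus P_f$ lifts along all backward iterates. So the set of entire curves, and its Zariski closure $\Sigma$, is totally invariant in the sense that $f^{-1}(\Sigma)=\Sigma$ generically. The same holds on each invariant stratum after passing to an iterate.

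\textbf{Step 2: Yamanoi's theorem and the log Kodaira dimension.} Here Yamanoi's result enters: an entire curve in a quasi-projective variety of maximal quasi-Albanese dimension, or more generally with big log cotangent behaviour, is degenerate. Combined with Step 1 and the polarization, which implies that $f$ expands the log canonical class, this forces the log Kodaira dimension of $(X,P_f)$ to be small. To see why, note that the pair is log Calabi--Yau-like, since $K_X+P_f$ pulls back as $f^*(K_X+P_f)=K_X+P_f+(\text{effective})$ by the log ramification formula. Here the key comparison is $R_f\le f^*P_f - P_f$, which follows because $f(R_f)\subseteq P_f$. Polarization plus this identity gives $\bar\kappa(X\setminus P_f)\le 0$, and we expect $K_X+P_f\equiv 0$ after checking that the effective part vanishes. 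If there were a nontrivial log Iitaka fibration or a quasi-Albanese fibration, it would be $f$-equivariant. This uses the uniqueness of such fibrations together with finiteness of the induced maps, after iteration. Rational connectedness of $X$ would make the base rationally connected, and we would obtain (2) with the descended endomorphism polarized and divisorially PCF, pushing forward $P_f$. So we may assume there is no such fibration.

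\textbf{Step 3: toric characterization (main obstacle).} With $K_X+P_f\sim_{\mathbb Q}0$ and the log ramification exactly $f^*P_f-P_f=R_f$, the pair $(X,P_f)$ is log canonical and $f$ is log étale on it. By known results on polarized endomorphisms that are étale in codimension one relative to a boundary (Nakayama/Zhang, Meng--Zhang, and the characterization of toric pairs through complexity zero, Brown--McKernan--Svaldi--Zong), we want to deduce that $(X,D_\infty)$ is toric with $D_\infty=P_f$. In this situation $f$ restricted to the torus is an étale endomorphism of $(\bC^*)^N$, hence a translation composed with an isogeny. The hard point is bridging the gap between ``non-taut'' and ``$K_X+P_f\equiv 0$ with no equivariant fibration''. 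Concretely, we must exclude the case where the entire curve lies in a proper invariant subvariety while the ambient complement has positive log Kodaira dimension in some directions. The strategy I would try first is to take the invariant Zariski closure $\Sigma$ of entire curves. By Yamanoi it has log Kodaira dimension zero, hence it is a translate of a subtorus in a quasi-Albanese. We then use the polarized dynamics, namely the density of preimages of totally invariant subvarieties, to show that either $\Sigma=X$, which gives the toric case, or the family of such subvarieties sweeps $X$ and defines the equivariant fibration of alternative (2).
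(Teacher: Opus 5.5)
There is a genuine gap, located exactly at the ``hard point'' you flag, and the mechanism you propose in Step~2 runs in the wrong direction. Your identity $f^*(K_X+P_f)=K_X+P_f+(D_1-P_f)$, with $D_1=(f^*P_f)_{\mathrm{red}}\supseteq P_f$, is correct. But it says that $K_X+P_f$ is \emph{expanded} by $f^*$. Expanding $(f^*-1)^{-1}=\sum_{k\ge1}q^{-Nk}(f^k)_*$ on $N^1(X)$ then shows that $K_X+P_f$ is pseudo-effective; it does not give $\bar\kappa(X\setminus P_f)\le 0$. On $\bP^N$ this divisor has degree $\deg P_f-N-1$, which by Corollaries~\ref{cor:degree-recurrence} and~\ref{cor:projective-space-minimal-toric-pair} is positive unless $f$ is monomial. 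For the map $F$ of Proposition~\ref{hyper-emb-counterexample}, $U_F$ is even of log general type: it is finite \'etale covered by $T^\circ$, which is finite \'etale over $W\cong\mathcal M_{0,5}$.

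Moreover, ``the effective part vanishes'' means $f^{-1}(P_f)=P_f$, which is the toric alternative itself, so Steps~2--3 are circular. The case that must actually be handled is therefore the log general type case, and there non-tautness cannot be turned into small $\bar\kappa$. Log general type alone gives no hyperbolicity (that is Green--Griffiths--Lang). Yamanoi's theorem applies only to subvarieties of semi-abelian varieties, i.e.\ when the quasi-Albanese map is generically finite, and this fails for $X\setminus P_f$ itself in general: for $F$, $P_F$ has two components, so the quasi-Albanese of $U_F$ is $\bC^*$. The same objection defeats your claim in Step~3 that the Zariski closure of entire curves has log Kodaira dimension zero ``by Yamanoi''.

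The missing idea is to use the finite \'etale tower $f:U_{n+1}\to U_n$, $U_n=X\setminus f^{-n}(P_f)$, to \emph{raise} the quasi-Albanese dimension.
\begin{itemize}
\item If the number of components of $f^{-n}(P_f)$ stays bounded, the tower stabilizes to a totally invariant $D_\infty\supseteq\operatorname{Supp}R_f$, and Meng--Zhong gives (1).
\item Otherwise $\operatorname{rank}\mathcal O(U_n)^*/\bC^*\to\infty$ (Lemma~\ref{lem:unit-growth}), and the dichotomy becomes field-theoretic. If the units of all $U_n$, or the pullbacks $(f^j)^*\beta^*\bC(Z)$ of the log-general-type quotient $Z$ of the unit-map image, generate a field of transcendence degree $<N$, then its relative algebraic closure in $\bC(X)$ is $f^*$-stable. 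Chow reduction, polarization descent and Proposition~\ref{prop:pcf-descent} then give (2).
\item If that transcendence degree is $N$, you get a generically finite map $U_M\to W$ into a torus with finite stabilizer, where Yamanoi's taut-modulo-$S_{\overline W}$ theorem applies.
\end{itemize}
Equivariance cannot come from ``uniqueness'' of a quasi-Albanese or Iitaka fibration of $X\setminus P_f$, because $f$ maps $U_1$, not $U_0$, onto $U_0$; the nested fields and relative algebraic closure replace this.

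Finally, Step~1 discards the information that tautness needs. $X$ is covered by rational curves, and a Brody limit inside $P_f$ is compatible with tautness: $U_F$ is taut but not hyperbolically embedded. The paper instead proceeds as follows.
\begin{itemize}
\item It keeps a non-normal, non-divergent sequence of discs centred near a point of $U$.
\item It lifts this sequence twice through the tower, with centres off the exceptional set (Lemma~\ref{lem:backward-density}).
\item It obtains uniform $H$-area bounds from convergence of the torus images (Lemma~\ref{uniform_energy_bound}).
\item It bubbles to a rational curve $g$ with $q^\ell\le\deg (f^\ell\circ g)^*H\le C<q^\ell$, a contradiction.
\end{itemize}
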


Note that a PCF endomorphism preserving a fibration may still have a Kobayashi hyperbolic PCF complement, so the trichotomy in Theorem~\ref{mainthm:Projective-space-pcf} is not exclusive. In dimension two we have a more precise trichotomy:
\begin{mainthm}\label{mainthm:P2}
Let \(f:\bP^2\to\bP^2\) be a PCF endomorphism with PCF set \(P_f\). If \(\bP^2\setminus P_f\) is not taut, then there exists \(k\in \bN^*\) such that the iterate \(F:=f^k\), of degree \(D\), satisfies one of the following:
\begin{enumerate}
\item \(F\) is a monomial power map.
\item There exists a rational surface \(S\), a dominant rational map \(\tau:S\dashrightarrow \bP^2\), a dominant rational map \(\rho:S\dashrightarrow \bP^1\), a rational self-map \(G:S\dashrightarrow S\), a PCF map \(h:\bP^1\to \bP^1\) such that
\[
\tau \circ G=F\circ \tau, \quad \rho \circ G=h\circ \rho,
\]
and, in coordinates trivializing \(\rho\) over a Zariski-open subset of \(\bP^1\), 
\[
G(t,u)=\bigl(h(t), a(t)u^D \bigr),\quad
a(t)\in \bC(t)^*,\; u\in \bC^*.
\]
\end{enumerate}
\end{mainthm}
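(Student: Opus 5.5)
The plan is to deduce Theorem~\ref{mainthm:P2} from Theorem~\ref{mainthm:Projective-space-pcf} and then analyse the fibred case, which in dimension two has a one-dimensional base. If $\bP^2\setminus P_f$ is not taut, Theorem~\ref{mainthm:Projective-space-pcf} gives two possibilities. If $f$ is a monomial power map $[\alpha_i x_{\sigma(i)}^d]$, the iterate $f^{k}$ with $k$ the order of $\sigma$ has trivial permutation part, and one more conjugation by a diagonal matrix removes the coefficients. This gives alternative (1). Otherwise there is a dominant rational map $\pi:\bP^2\dashrightarrow Y$ with connected fibers, where $Y$ is a normal rationally connected curve, so $Y\cong\bP^1$. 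We also get a polarized divisorially PCF map $g:\bP^1\to\bP^1$ with $g\circ\pi=\pi\circ f$. In dimension one, divisorially PCF is just PCF, so $g$ is a PCF rational map $h$ of degree $\geq2$.

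Next I would resolve the indeterminacy of $\pi$ to get a rational surface $S_0\to\bP^2$ and a fibration $\rho_0:S_0\to\bP^1$ whose general fiber is a curve $C_t$ (the closure of a level set of $\pi$). Then $f$ induces a rational map $G_0$ on $S_0$, which maps fibers to fibers: $C_t\to C_{h(t)}$.

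The key step is to show the general fiber is rational and that the fiber maps are monomial. First, $\deg(f^n)=d^{2n}$ factors as $\deg(h^n)$ times the degree on fibers, so the fiber maps have degree growing exponentially. Ramification and PCF considerations then force the genus of the normalization of $C_t$ to be $0$ or $1$: by Riemann--Hurwitz along the orbit, a higher-genus fiber would force the fiber maps to have degree $1$.

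Non-tautness should also exclude the case where the fiber maps are generically PCF with at least three postcritical points in each fiber. In that case one expects a hyperbolic fibration over $\bP^1\setminus P_h$ with hyperbolic fibers, which is taut (a Kiernan-type argument). The remaining case is that each fiber meets $P_f$ in at most two points. These points are totally invariant for the fiber maps, since the critical points of the fiber maps lie in $P_f\cup\Crit(f)$. So, on a Zariski-open set of $t$, the fibers are $\bP^1$ with two marked points $0,\infty$, and the fiber maps are $u\mapsto a(t)u^{e}$ with $e=\pm\deg$. The genus one case should lead to Lattès-type maps, which do have taut complement, or can be reduced to the above by passing to a quotient. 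I expect showing that only the two-point case survives to be the main obstacle.

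Finally, trivialise $\rho$ over a Zariski-open subset of $\bP^1$. The two invariant sections $0$ and $\infty$ are multisections, so after a base change we may need a cover $S\to S_0$. That is why the statement has $\tau:S\dashrightarrow\bP^2$ with $\tau\circ G=F\circ\tau$ rather than a birational conjugacy. Replacing $f$ by an iterate $F=f^k$ makes the monodromy on the two sections trivial and makes the fiber exponent positive. Passing to an iterate also keeps $h$ PCF. Comparing topological degrees gives $\deg(F)=D^2=\deg h\cdot e$, and matching the action on $\mathcal{O}(1)$ along a general fiber forces $e=D$. This gives $G(t,u)=(h(t),a(t)u^D)$, which is alternative (2).
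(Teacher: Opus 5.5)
\textbf{There is a genuine gap.} Your reduction to an invariant pencil with base $\bP^1$ and a PCF base map $h$ of degree $\deg f$ is fine. The failure is at the step you flag as the main obstacle: the claim that non-tautness rules out fiber maps with at least three postcritical points. This is false for the pencil that Theorem~\ref{mainthm:Projective-space-pcf} hands you, because that theorem gives no control over \emph{which} invariant pencil you get.

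\emph{Counterexample.} Take $f=[x^2:y^2:z^2-2x^2]$.
\begin{itemize}
\item One computes $\Crit(f)=\{xyz=0\}$ and $P_f=\{x=0\}\cup\{y=0\}\cup\{z=2x\}\cup\{z=-2x\}$.
\item In the chart $x=1$ this gives $\bP^2\setminus P_f\cong\bC^*\times(\bC\setminus\{\pm2\})$, which is not even hyperbolic, and the tower $D_n$ does not stabilize.
\item The pencil $[x:y]$ is invariant with $h(t)=t^2$, and it satisfies every conclusion of Theorem~\ref{mainthm:Projective-space-pcf}.
\item Yet its fiber maps are $w\mapsto w^2-2$, a general fiber meets $P_f$ in the three points $w=\pm2,\infty$, and the number of horizontal components of $D_n$ tends to infinity.
\end{itemize}
Your heuristic breaks because $\bP^1\setminus P_h=\bC^*$ is not hyperbolic. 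The theorem holds for this map only through a different invariant pencil, $[x:z]$, on which $G(s,v)=(s^2-2,v^2)$.

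Even when $\#P_h\ge3$, the implication ``hyperbolic base and hyperbolic fibers implies taut'' would need uniformity near the base point of the pencil. It would also need uniformity near points where horizontal components of $P_f$ meet away from the vertical part. You give no argument for either. The genus-one discussion is likewise only a sketch.

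\textbf{What is missing} is a way to use non-tautness to \emph{select} the pencil. In the paper this is Lemma~\ref{lem:p2-unit-pencil}.
\begin{itemize}
\item Suppose a unit map $\beta_n$, or one of the pairs $(\beta_{n_0},\beta_n)$ or $(\beta_{n_0},\beta_{n_0}\circ f)$, had two-dimensional image. That image has finite stabilizer, so the Yamanoi-based criterion Theorem~\ref{tautness-criterion} would make $U$ taut.
\item Hence all the fields $\beta_n^*\bC(Z_n)$ lie in a single $f$-invariant pencil field $L$, with $\operatorname{rk}\bigl(\mathcal O(U_n)^*/(\mathcal O(U_n)^*\cap L^*)\bigr)\le 1$ for all large $n$.
\item The Dabija--Jonsson classification (Theorem~\ref{thm:DJ}) shows that invariant pencils on $\bP^2$ have rational fibers, so genus one never occurs. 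It also makes the binomial case an explicit computation.
\item In the elementary case, each horizontal component $C$ of the tower gives a unit $F_C/l^{\deg C}$ that is independent modulo $L^*$. So the whole tower has at most one horizontal component.
\item Together with the exceptional section over the blown-up base point, the Riemann--Hurwitz count $2-r=d(2-r)$ on general fibers forces $r=2$.
\item This yields the two totally invariant sections and the form $G(t,u)=(h(t),a(t)u^d)$.
\end{itemize}
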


As Theorem~\ref{mainthm:Projective-space-pcf} suggest, PCF endomorphisms of varieties other than projective spaces may, a priori, show up as reduction factors even if one studies only PCF endomorphisms of projective spaces. Note that examples of divisorially PCF polarized endomorphisms of non-smooth non-rational normal Fano varieties were constructed by Koll\'ar--Xu~\cite{KollarXu}. 

For PCF endomorphism on such normal varieties, we can still prove a Kobayashi hyperbolicity alternative:

\begin{mainthm}\label{mainthm:rationally-connected_Kob}
Let \(f\) be a divisorially PCF polarized endomorphism of a rationally connected normal complex projective variety \(X\) of dimension \(N\) with PCF divisor \(P_f\) and singular locus \(\Sing(X)\). If \(X\setminus (P_f\cup \Sing(X))\) is not Kobayashi hyperbolic, then one of the following holds: 
\begin{enumerate}
\item there exists a divisor \(D_\infty\subset X\) containing the codimension one part of the ramification locus of \(f\) such that \(f^{-1}(D_\infty)=D_\infty\); 
\item there exist a rationally connected normal projective variety \(Y\) of positive dimension strictly less than \(N\), a dominant rational map with connected fibers \(\pi:X\dashrightarrow Y\), and a polarized divisorially PCF endomorphism \(g:Y\to Y\) such that \(g\circ\pi=\pi\circ f\).
\end{enumerate}
\end{mainthm}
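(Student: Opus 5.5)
The plan is to argue by contradiction with non-hyperbolicity. Set $U:=X\setminus(P_f\cup\Sing(X))$. Since $f$ is divisorially PCF, $f^{-1}(P_f)\supseteq P_f$. Away from the ramification divisor, $f$ is étale in codimension one. Removing $f^{-1}(P_f)\cup f^{-1}(\Sing X)$, one obtains a finite étale cover
\[
f: V:=X\setminus\bigl(f^{-1}(P_f\cup \Sing X)\bigr)\to U ,
\]
where one has to handle the codimension $\geq 2$ locus over $\Sing(X)$ via purity for normal varieties. Hyperbolicity is inherited by finite étale covers and conversely. If $U$ is not Kobayashi hyperbolic, Brody-type reparametrization (after passing to a suitable compactification) produces a nonconstant entire curve, or a limit disc, in $X$ whose image avoids $P_f$ up to the boundary behavior.

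The first step is to apply Yamanoi's results. For a variety whose quasi-Albanese/fundamental group data are rich, entire curves are degenerate. Here $X$ is rationally connected, so the relevant input is Yamanoi's theorem on entire curves in complements of divisors in varieties of log general type, or its version for quasi-projective varieties admitting a big representation / dominating a log general type base.

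I would split by the log Kodaira dimension $\bar\kappa(X\setminus P_f)$. Using the ramification formula $K_X+R_f=f^*K_X$ and $f^*P_f = P_f+R_f'$ with $R_f'\le R_f$, one gets $f^*(K_X+P_f)\geq K_X+P_f$. Polarization then forces $\bar\kappa\in\{-\infty,0\}$ unless $K_X+P_f$ is big. In the big case, Yamanoi's log-general-type result should give hyperbolicity modulo a proper $f$-invariant subvariety. I would try to push that exceptional locus into $P_f$ using $f$-invariance and polarization, since a totally invariant proper subvariety not in $P_f$ must avoid ramification.

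The second step handles the non-big case through the Iitaka fibration of $K_X+P_f$, which is $f$-equivariant because $f^*(K_X+P_f)$ differs from $K_X+P_f$ by an effective divisor. If $0<\bar\kappa<N$, the Iitaka fibration gives a rational map $\pi:X\dashrightarrow Y$ with connected fibers and $\dim Y$ strictly between $0$ and $N$. The map $f$ descends to $g$ with $g\circ\pi=\pi\circ f$. By the standard results on polarized endomorphisms (Meng--Zhang type descent) one may replace $Y$ by a normal model on which $g$ is a polarized endomorphism. $Y$ is rationally connected as the image of $X$, and $g$ is divisorially PCF because branch divisors of $g$ pull back into images of $R_f$. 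This gives case (2). If $\bar\kappa\le 0$, I would aim for case (1): a log Calabi--Yau type situation with $f^*(K_X+P_f)=K_X+P_f$ forces $R_f\subseteq f^{-1}(P_f)$ with equality of multiplicities. One then takes $D_\infty$ to be the maximal totally invariant divisor contained in $P_f$, which contains the codimension-one ramification locus. The remaining possibility, $\bar\kappa=-\infty$, would be reduced to case (1) or (2) via an MRC-type or equivariant MMP argument for the pair.

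The main obstacle is the big case. There I need Yamanoi's degeneracy statement in a form applicable to the possibly singular normal $X$ with the singular locus removed. I also need to show that the non-hyperbolicity locus cannot survive outside $P_f$, which requires an iteration argument: preimages under $f^n$ of the exceptional set, combined with the finite étale cover property, force it to be totally invariant. I would first resolve $X$ equivariantly only generically and work with Brody limits on the resolution.
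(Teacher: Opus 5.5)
Your proposal has a genuine gap at its central step. In the case where $K_X+P_f$ is big, you appeal to a ``Yamanoi log-general-type result'' that would give hyperbolicity of $X\setminus P_f$ modulo a proper subvariety. For an arbitrary quasi-projective variety of log general type this is the logarithmic Green--Griffiths--Lang conjecture, which is open. Yamanoi's theorem (Theorem~\ref{thm:pkob}) concerns closed subvarieties of semi-abelian varieties, and your argument never produces a morphism to one, nor a big representation of $\pi_1$.

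This is not a peripheral case. On $\bP^N$ one has $\deg P_f\ge N+1$ by Corollary~\ref{cor:degree-recurrence}, with equality only in the monomial case (Corollary~\ref{cor:projective-space-minimal-toric-pair}). Otherwise $K_{\bP^N}+P_f$ is ample, so every non-monomial example lies in the branch you cannot treat. Nor can a semi-abelian target be found on $U$ itself: $X$ has trivial Albanese, and $\mathcal O(U)^*/\bC^*$ can be zero, as on $\bP^N$ whenever $P_f$ is irreducible.

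The missing idea is the \'etale tower $U_n=X\setminus f^{-n}(\Sing X\cup P_f)$, with a dichotomy on the number $r_n$ of boundary components rather than on $\bar\kappa$.
\begin{itemize}
\item If $r_n$ is bounded, Corollary~\ref{cor:divisorial-tower} directly gives the totally invariant $D_\infty$ of case~(1).
\item If $r_n$ is unbounded, finite generation of $\Pic$ of a resolution forces $\operatorname{rank}\mathcal O(U_n)^*/\bC^*\to\infty$. The unit maps are quasi-Albanese maps into tori, and quotienting by the stabilizer gives a log-general-type $Z$ (Theorem~\ref{thm:kawamata-abramovich}).
\item Then either the fields generated by the units and by the $\beta\circ f^j$ have transcendence degree $<N$, giving an $f^*$-invariant subfield and hence case~(2) via Chow reduction, or one obtains a generically finite $\Psi:U_M\to Z^s$. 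Yamanoi's theorem applies to $\Psi$ and yields a Kobayashi--Royden lower bound off a proper algebraic subset.
\end{itemize}
Your last step, backward invariance of the degeneracy locus plus Zariski density of backward orbits, is essentially the paper's Proposition~\ref{prop:dynamical-amplification}. But it needs exactly this input.

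The other branches are not sound as written either.
\begin{itemize}
\item Polarization does not force $\bar\kappa\in\{-\infty,0,N\}$. Take $(z^d,T_d(w))$ on $\bP^1\times\bP^1$ with $T_d$ a Chebyshev polynomial, so $\#P_{T_d}=3$. It is polarized, and $U=\bC^*\times(\bP^1\setminus 3\text{ points})$ has $\bar\kappa(U)=1$. You treat $0<\bar\kappa<N$ anyway, and that Iitaka-fibration branch is essentially fine.
\item For $\bar\kappa=0$, the identity $f^*(K_X+P_f)=K_X+P_f$ is unjustified. In fact $f^*(K_X+P_f)\sim K_X+D_1$, where $D_1\supseteq P_f$ may be strictly larger. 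Also, a totally invariant divisor contained in $P_f$ need not contain $R_f^{(1)}$. A workable route would be Kawamata's theorem that the quasi-Albanese map is dominant when $\bar\kappa=0$. Applied to the covers $U_n$, which have the same $\bar\kappa$, it bounds the unit ranks and hence $r_n$.
\item The case $\bar\kappa=-\infty$ is left unaddressed.
\item For non-compact $U$, non-hyperbolicity does not produce an entire curve via Brody reparametrization. This is why the paper works with the infinitesimal Kobayashi--Royden metric rather than with entire curves.
\end{itemize}
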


In the final section we also study the explicit PCF endomorphism
\[
F[X:Y:Z]=[X^2-2YZ:Y^2-2XZ:Z^2]
\]
of $\bP^2$. Its postcritical complement $\bP^2\setminus P_F$ is taut, but is not hyperbolically embedded into $\bP^2$. Moreover, no positive iterate of $F$ preserves a rational pencil. See Proposition~\ref{hyper-emb-counterexample} for details.

\subsection{Strategy and plan}\label{subsec:plan}
Our proof combines quasi-Albanese morphisms with the backward dynamics of $f$. For $X$ and $f$ as in Theorem~\ref{mainthm:rationally-connected_Kob}, removing the successive inverse images of $P_f\cup\Sing(X)$ gives a \emph{tower of finite \'etale coverings} of the smooth postcritical complement $U:= X \setminus \left(P_f\cup\Sing(X)\right)$. If only finitely many prime boundary divisors occur, their union is totally invariant and contains the ramification divisor. In the smooth case, Meng--Zhong's theorem identifies the resulting pair as toric, yielding the first alternative.

Otherwise, the growing boundary supplies increasingly many units in the function field. The resulting unit maps have an intrinsic interpretation as \emph{quasi-Albanese morphisms}. Note that rational connectedness removes the abelian part, and thus the quasi-Albanese varieties of $U$ are algebraic tori. Their images connect the dynamics to Yamanoi's celebrated hyperbolicity theorem~\cite{Yamanoi2026}. Taking account of their translation symmetries and comparing these maps under iteration, we obtain either a nontrivial equivariant rational fibration or local lower bounds for the Kobayashi--Royden metric outside a proper algebraic subset. In the fibration case, polarization and divisorial postcritical finiteness descend to the base.

The dynamics then gives a \emph{dense-or-nothing} principle for metric degeneracy. Every point of the smooth postcritical complement has a Zariski-dense backward orbit, and failure of a local positive lower bound propagates backwards through the \'etale tower. The degeneracy locus is therefore either empty or Zariski dense (cf. Proposition~\ref{prop:dynamical-amplification}). Yamanoi's estimates confine it to a proper algebraic subset, forcing it to be empty. This proves the Kobayashi hyperbolicity alternative.

To strengthen this hyperbolicity conclusion to tautness when $X$ is smooth, we use a \emph{bubbling argument}. A sequence of discs witnessing non-tautness can be lifted through the tower with centers outside the algebraic exceptional set. Hurwitz's theorem preserves nonnormality in $X$. By Yamanoi's theorem, the images under the associated torus map have a subsequence converging in a compactification. An algebraic comparison then gives uniform area bounds on smaller discs. A second lift through a high iterate again has bounded area. Then one can use the Brody reparametrization to produce a nonconstant entire curve with \emph{finite area}, which extends to a rational curve in $X$. After composition with that iterate, this rational curve is constrained by the first area bound. Polarization forces its area to exceed that bound when the second lift is sufficiently deep, giving the contradiction.

The paper begins with a toy example of Latt\`es maps in Section~\ref{sec:lattes-maps}. Sections~\ref{sec:PCF-rc-chow} and~\ref{sec:semiabelian-hyperbolicity} develop the equivariant reductions and quasi-Albanese construction. We prove the hyperbolicity and tautness alternatives in Section~\ref{sec:main-proof-hyperbolicity}, then deduce the projective-space statement in Section~\ref{sec:ramification-and-toric}. The final section uses the classification of invariant pencils to prove Theorem~\ref{mainthm:P2} and studies the aforementioned example.

\section{Toy example: Lattès maps}\label{sec:lattes-maps}

\subsection{Preliminaries}\label{subsec:lattes-definition}

We say that an endomorphism $f:\bP^N\to\bP^N$ is a \emph{Lattès map} if there exist a complex abelian variety $A$ of dimension $N$, an affine endomorphism
\[
L=t_a\circ\varphi:A\longrightarrow A,
\]
where $\varphi:A\to A$ is an isogeny and $t_a$ is a translation, and a finite surjective morphism $\pi:A\to\bP^N$ such that $f\circ\pi=\pi\circ L$.

We now prove that such a Lattès map $f$ is PCF. Let $B_\pi:=\pi(\Crit(\pi))$ be the branch locus of $\pi$. It is a divisor. We claim that
\[
f^n(\Crit(f))\subset B_\pi
\quad\text{for every }n\geq1.
\]
Let $x\in\Crit(f)$ and choose $y\in A$ with $\pi(y)=x$. Since $d_xf$ is singular, the differential $d_x(f^n)$ is singular for every $n\geq 1$. From the identity \(f^n\circ\pi=\pi\circ L^n\) we obtain
\begin{equation}\label{eq:lattes-differential}
d_{L^n(y)}\pi\circ d_yL^n
=
d_x(f^n)\circ d_y\pi.
\end{equation}
The right-hand side is singular. Since $L$ is étale, $d_yL^n$ is invertible, and hence $d_{L^n(y)}\pi$ is singular. Thus $L^n(y)\in\Crit(\pi)$, and therefore
\[
f^n(x)=\pi(L^n(y))\in B_\pi.
\]
This proves the claim.
The postcritical set thus satisfies \(P_f\subset B_\pi\). In particular $P_f$ is contained in a proper Zariski-closed subset of $\bP^N$, so $f$ is PCF.

Actually we have $P_f=B_\pi$. The reason is as follows. Let
$\Crit(\pi)=D_1\cup\ldots\cup D_r$ be its irreducible decomposition. Let $H:=\pi^*\cO_{\bP^N}(1)$ and $d:=\deg f$. Since $\pi$ is finite, $H$ is ample and $H^N=\deg(\pi)>0$. Note that $L^*H\simeq H^{\otimes d}$.
By the projection formula, we have
$d^{nN}H^N=((L^n)^*H)^N=\deg(L^n)H^N$, so $\deg(L^n)=d^{nN}$. Again by the projection formula we get
\[
d^{n(N-1)}(L^n)^*D_i\cdot H^{N-1}=(L^n)^*D_i\cdot\bigl((L^n)^*H\bigr)^{N-1}=d^{nN}D_i\cdot H^{N-1},
\]
so $(L^n)^*D_i\cdot H^{N-1}=d^nD_i\cdot H^{N-1}$. Since $(L^n)^*D_i$ is reduced because $L$ is \'etale, if all its components lied in $\Crit(\pi)$, then we would have
\[
d^nD_i\cdot H^{N-1}\leq\Crit(\pi)_{\mathrm{red}}\cdot H^{N-1},
\]
impossible for $n$ large. Fix such an $n$ and a component $E$ of $(L^n)^*D_i$ with
$E\not\subset\Crit(\pi)$, so $L^n(E)=D_i$. Let $j<n$ be the minimal integer such that $L^{j+1}(E)\subset\Crit(\pi)$, and let $E':=L^j(E)$. For general $y\in E'$, taking $n=1$
in \eqref{eq:lattes-differential}, remarking that the maps $d_yL$ and $d_y\pi$ are invertible while $d_{L(y)}\pi$ is singular, we deduce that $d_{\pi(y)}f$ is singular. Hence
$\pi(E')\subset\Crit(f)$ and $\pi(D_i)=f^{n-j}(\pi(E'))\subset P_f$. Therefore $B_\pi\subset P_f$.

Let us summarize:
\begin{prop}\label{prop:lattes-pcf}
Every Lattès map $f:\bP^N\to\bP^N$ is PCF, with PCF divisor equal to the branching locus of the associated cover of $\bP^N$ by the abelian variety.
\end{prop}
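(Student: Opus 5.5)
The proposition is a summary of the two inclusions established just before it, so the plan is to assemble them in order. Fix the data of a Lattès map: an abelian variety $A$, the affine endomorphism $L=t_a\circ\varphi$, and the finite cover $\pi:A\to\bP^N$ with $f\circ\pi=\pi\circ L$. Two facts about $L$ drive everything. First, $L$ is étale, since both $\varphi$ and $t_a$ are. Second, $L^*H\simeq H^{\otimes d}$, where $H=\pi^*\cO_{\bP^N}(1)$ and $d=\deg f$; this follows from $\pi^*f^*\cO(1)=L^*\pi^*\cO(1)$.

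\textbf{Step 1: $P_f\subset B_\pi$.} Take $x\in\Crit(f)$, pick a preimage $y\in\pi^{-1}(x)$, and differentiate $f^n\circ\pi=\pi\circ L^n$ at $y$, as in \eqref{eq:lattes-differential}. The factor $d_yL^n$ is invertible because $L$ is étale. The factor $d_x f^n$ is singular because $x$ is critical. Hence $d_{L^n(y)}\pi$ must be singular, so $f^n(x)=\pi(L^n(y))\in B_\pi$. This holds for every $n\ge1$, so $P_f$ lies in the proper closed set $B_\pi$, and therefore $f$ is PCF. Note that $B_\pi$ is a divisor by purity of the branch locus, since $\bP^N$ is smooth.

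\textbf{Step 2: $B_\pi\subset P_f$.} Let $D_i$ be a component of $\Crit(\pi)$. The idea is to find a preimage component of $D_i$ under some $L^n$ that is not itself critical for $\pi$.
\begin{itemize}
\item Compute intersection numbers against $H$ using the projection formula: this gives $\deg L^n=d^{nN}$, and then $(L^n)^*D_i\cdot H^{N-1}=d^n\,D_i\cdot H^{N-1}$.
\item Since $L^n$ is étale, $(L^n)^*D_i$ is reduced. If all of its components lay in $\Crit(\pi)$, its degree would be at most $\Crit(\pi)_{\mathrm{red}}\cdot H^{N-1}$. The left side grows like $d^n$, so this fails for large $n$.
\item So for large $n$ there is a component $E$ of $(L^n)^*D_i$ with $E\not\subset\Crit(\pi)$. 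Let $j<n$ be the first index with $L^{j+1}(E)\subset\Crit(\pi)$, and set $E'=L^j(E)$. Then $E'\not\subset\Crit(\pi)$ while $L(E')\subset\Crit(\pi)$.
\item Apply the $n=1$ identity at a general point of $E'$. There $d\pi$ and $dL$ are invertible while $d_{L(y)}\pi$ is singular, which forces $d f$ to be singular at $\pi(y)$. Hence $\pi(E')\subset\Crit(f)$.
\item Since $L^{n-j}(E')=D_i$, we get $\pi(D_i)=f^{n-j}(\pi(E'))\subset P_f$, using $n-j\ge1$.
\end{itemize}

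Combining the two steps gives $P_f=B_\pi$.

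The main obstacle is the reducedness and counting argument in Step 2. It is the only place where the étaleness of $L$ and the polarization are used beyond the chain rule, and one must check that components of $\Crit(\pi)$ are being counted with their reduced degree. Everything else is formal.
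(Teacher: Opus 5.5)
Your proposal is correct and follows the paper's proof essentially step for step: the chain-rule argument gives $P_f\subset B_\pi$; the projection-formula count $(L^n)^*D_i\cdot H^{N-1}=d^n\,D_i\cdot H^{N-1}$, played against the reducedness of $(L^n)^*D_i$, produces a non-critical preimage component; and the first index $j$ with $L^{j+1}(E)\subset\Crit(\pi)$ yields $\pi(E')\subset\Crit(f)$ and hence $\pi(D_i)\subset P_f$. As in the paper, the growth step tacitly uses $d=\deg f\ge 2$, which is needed since for $d=1$ one has $P_f=\varnothing\neq B_\pi$.
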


\subsection{Kobayashi hyperbolicity of Lattès PCF complements}\label{subsec:Lattes-pcf-complements}
We give an illustration, in the case of Lattès maps, of how Yamanoi's work is relevant to our study of PCF endomorphisms. The following theorem of Yamanoi is a deep result relying on many prior theories. 

\begin{thm}[{Yamanoi~\cite[Cor.~1.2]{YamanoiAbelian}}]\label{thm:Yamanoi-abelian}
If $A$ is a complex abelian
variety and $D$ is a reduced ample divisor on $A$, then $A\setminus  D$ is Kobayashi hyperbolic.
\end{thm}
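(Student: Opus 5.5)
We take this result from Yamanoi rather than reprove it, but here is how we would approach it.

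The plan is the classical Brody-type reduction, combined with a value-distribution statement for entire curves in abelian varieties.

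Suppose $A\setminus D$ is not Kobayashi hyperbolic. There are then holomorphic discs $\varphi_n:\bD\to A\setminus D$ whose derivatives at the centre blow up with respect to a fixed flat metric on $A$. Since $A$ is compact, Brody's reparametrization lemma produces a nonconstant entire curve $g:\bC\to A$ with bounded derivative, obtained as a locally uniform limit of rescaled discs $\varphi_n\circ r_n$ with values in $A\setminus D$. Write $D=D_1\cup\cdots\cup D_r$ for the irreducible decomposition. By Hurwitz's theorem, applied to local defining equations of each $D_i$, for each $i$ either $g(\bC)\subset D_i$ or $g(\bC)\cap D_i=\emptyset$.

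Next we apply the Bloch--Ochiai theorem. The Zariski closure of $g(\bC)$ is a translate $x+B$ of an abelian subvariety $B\subset A$, with $\dim B>0$ because $g$ is nonconstant. Let $D'$ be the union of the components $D_i$ not containing $x+B$. Then $g$ maps $\bC$ into $(x+B)\setminus D'$. If $D'\cap(x+B)\neq\emptyset$, then $D'|_{x+B}$ is the restriction of an effective divisor to $x+B$. In the favourable case it is ample on $x+B$, since $D$ is ample and $D-D'$ restricts to a divisor that is trivial or at least effective. One then concludes by the Siu--Yeung / Noguchi--Winkelmann--Yamanoi theorem: every entire curve in an abelian variety avoiding an ample divisor is constant. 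More robustly, one can use Yamanoi's second main theorem with truncation level one for Zariski-dense entire curves in $x+B$,
\[
T_g(r,E)\le N^{(1)}(r,g^*E)+\varepsilon\,T_g(r)\quad\|,
\]
applied to $E=D'|_{x+B}$. This forces $T_g(r,E)=o(T_g(r))$, which contradicts the ampleness of $E$ together with the Zariski density of $g$ in $x+B$.

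We expect the main obstacle to be the degenerate case in which $x+B$ lies inside $D$ and $D'|_{x+B}$ carries no positivity, in particular when $D'\cap(x+B)=\emptyset$. There the Brody limit forgets the information that the discs $\varphi_n$ avoided $D$. To handle it we would go back to the approximating discs and not pass to the limit naively. First we translate so that the $\varphi_n$ approach $x+B$ transversally. Then we project along $A\to A/B$ and use the ampleness of $D$ in the normal directions. The goal is to show that the rescaled discs have a limit that still avoids a divisor which is ample on $x+B$, or else to produce, by induction on $\dim A$, a lower-dimensional abelian variety with a reduced ample boundary that contains a non-hyperbolic complement. Making this ``no loss in the limit'' step work is where the full strength of Yamanoi's truncated second main theorem, with its uniformity in families, would be needed.
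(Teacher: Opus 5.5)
The paper gives no proof of this statement. It is quoted from Yamanoi and used as a black box, so your decision to cite it is exactly what the paper does. The sketch you append, however, is not a proof, and its gap sits precisely where the content of the theorem lies.

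What the sketch does establish is the case where the Brody limit $g$ is not contained in $D$. Then Hurwitz gives $g(\bC)\cap D=\emptyset$, $D|_{x+B}$ is a nonempty ample divisor on $x+B$, and Siu--Yeung gives a contradiction. Together with Bloch--Ochiai, this settles the theorem when $D$ contains no translate of a positive-dimensional abelian subvariety, a case that follows from Siu--Yeung and Bloch--Ochiai alone. Your ``favourable case'' with $x+B\subset D_i$ for some $i$ is not justified. The components of $D-D'$ contain $x+B$, so $D-D'$ does not restrict to an effective divisor on $x+B$. The line bundle $\mathcal{O}(D-D')|_{x+B}$ is only nef, and $D|_{x+B}$ minus a nef class need not be ample. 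Indeed, for irreducible $D\supset x+B$ one has $D'=\emptyset$.

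More seriously, the degenerate case cannot be closed by any refinement of the Brody limit. The only hypothesis you retain is the existence of discs $\varphi_n:\bD\to A\setminus D$ with $\|\varphi_n'(0)\|\to\infty$. Such discs exist exactly when $A\setminus D$ is not hyperbolically embedded in $A$, and this happens for ample $D$. Here is an example:
\begin{itemize}
\item Let $A=E\times S$ with $S$ an abelian surface, and let $C=E\times\{0\}$.
\item Let $D$ be a general member of $|L\otimes\mathcal{I}_C|$ for $L$ sufficiently ample. Then $D$ is smooth, ample, and contains $C$.
\item On the universal cover $\bC\times B_\delta$ of a neighbourhood $E\times B_\delta$ of $C$, one has $D=\{\Phi_1(w)s_1+\Phi_2(w)s_2+O(|s|^2)=0\}$, with $\Phi_1,\Phi_2$ entire and without common zeros.
\item Choose entire $\Theta_1,\Theta_2$ with $\Theta_1\Phi_1+\Theta_2\Phi_2=1$. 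Take the discs $\zeta\mapsto\bigl(\lambda_n\zeta,\epsilon_n\Theta_1(\lambda_n\zeta),\epsilon_n\Theta_2(\lambda_n\zeta)\bigr)$, read through $\bC\to E$, with $\lambda_n\to\infty$ and $\epsilon_n$ small enough.
\end{itemize}
These discs avoid $D$. Their derivatives at $0$ blow up, and their Brody limit is a linear curve in $C\subset D$ that carries no positivity at all.

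So a proof must use that, by Royden's criterion, one may take $\varphi_n(0)\to p\in A\setminus D$. It must then transport this information from the centre of the disc to the points where the bubble forms near $x+B\subset D$. Your remedy supplies no such mechanism. At the Brody scale the projections to $A/B$ converge to a constant, so no normal positivity is visible, and the proposed induction on $\dim A$ is never formulated. That transport is the genuinely new input in Yamanoi's work. Without it, your argument covers only the case where $D$ contains no translate of an abelian subvariety.
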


\begin{cor}\label{cor:lattes-kobayashi-hyperbolic}
For any Lattès map, the complement of its PCF divisor is Kobayashi hyperbolic. 
\end{cor}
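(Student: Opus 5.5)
The plan is to reduce to Theorem~\ref{thm:Yamanoi-abelian} via the finite cover $\pi:A\to\bP^N$. By Proposition~\ref{prop:lattes-pcf}, $P_f=B_\pi=\pi(\Crit(\pi))$. Set $D:=\pi^{-1}(P_f)_{\mathrm{red}}$. This is a reduced divisor on $A$, and $\pi$ restricts to a finite \'etale covering
\[
A\setminus D\longrightarrow \bP^N\setminus P_f .
\]
Indeed, $\pi$ is finite and, away from $\Crit(\pi)\subset D$, it is a local biholomorphism; finite plus locally biholomorphic means a topological covering. Kobayashi hyperbolicity is invariant under unramified coverings, since the Kobayashi pseudodistance lifts: $d_{\tilde Y}$ and $d_Y$ are related by $d_Y(p,q)=\inf d_{\tilde Y}(\tilde p,\tilde q)$. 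Hence it suffices to show that $A\setminus D$ is Kobayashi hyperbolic.

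Next I show that $D$ is ample, so that Theorem~\ref{thm:Yamanoi-abelian} applies. Since $P_f$ is a nonempty effective divisor on $\bP^N$ (nonempty because $\Crit(f)\neq\emptyset$ for $\deg f\ge2$, e.g.\ by Riemann--Hurwitz $K_{\bP^N}=f^*K_{\bP^N}+R_f$ forces $R_f\sim (N+1)(d-1)H\neq0$), it is ample. Its pullback $\pi^*P_f$ under the finite morphism $\pi$ is then ample. The reduced divisor $D$ has the same support as $\pi^*P_f$, and on an abelian variety an effective divisor is ample if and only if some positive multiple of it is ample. More intrinsically, an effective divisor $E$ on an abelian variety is ample iff it is not invariant under translation by any positive-dimensional subgroup, a condition that depends only on the support. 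Moreover, $mD-\pi^*P_f$ is effective for $m$ large, and $\pi^*P_f\le mD$ with both effective of the same support. So I would argue via this support criterion: if $D$ were stabilized by an abelian subvariety $B\neq0$, then so would be every component of the support, hence $\pi^*P_f$, contradicting its ampleness.

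Then Theorem~\ref{thm:Yamanoi-abelian} gives that $A\setminus D$ is Kobayashi hyperbolic, and descending along the finite \'etale covering gives the corollary. The only mildly delicate step is the ampleness of the reduced divisor $D$. I expect it to be handled cleanly by the translation-stabilizer criterion on abelian varieties, stated for effective divisors with a given support. Alternatively, one can note that $\mathcal{O}(D)$ is nef and big: it is big because $\pi^*P_f\le mD$, and on abelian varieties nef and big implies ample.
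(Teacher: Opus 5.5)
Your proposal is correct and follows the paper's proof. Like the paper, you pass to the finite \'etale cover $A\setminus\pi^{-1}(P_f)\to\bP^N\setminus P_f$ furnished by $P_f=B_\pi$, apply Yamanoi's theorem on $A$, and descend Kobayashi hyperbolicity along the finite unramified covering. Your only addition is the justification that the reduced divisor underlying the ample divisor $\pi^*P_f$ is still ample (via the translation-stabilizer criterion, or via nef and big implies ample). The paper asserts this in one line, and it genuinely uses that $A$ is an abelian variety.
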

\begin{proof}
Let $f:\bP^N\to\bP^N$ be a Lattès map as in \S\ref{subsec:lattes-definition}. 
Since $P_f$ is a nonzero effective divisor on $\bP^N$, it is ample. As $\pi$ is a finite morphism, the pullback $\pi^*P_f$ is again ample, so is its underlying reduced divisor. Yamanoi's theorem shows that $A\setminus\pi^{-1}(P_f)$ is Kobayashi hyperbolic. Since $P_f=B_\pi$ by Prop.~\ref{prop:lattes-pcf}, the restriction
\[
\pi:A\setminus\pi^{-1}(P_f)\to\bP^N\setminus P_f
\]
is a finite unramified covering. Kobayashi hyperbolicity descends under
finite unramified coverings (see \cite[Th.~3.2.8]{Kobayashi1998}). Hence $\bP^N\setminus P_f$ is Kobayashi hyperbolic.
\end{proof}

\section{Postcritically finiteness on normal varieties and Chow reduction}\label{sec:PCF-rc-chow}

Throughout this section, all varieties are irreducible complex algebraic varieties.  For a normal variety \(X\), a prime divisor means an irreducible closed subvariety of codimension
one.  We write \(\bC(X)\) for the function field of \(X\). 

\subsection{Chow reduction and equivariant regularization}\label{subsec:chow-reduction}

Let
\[
 \pi:X\dashrightarrow Y
\]
be a dominant rational map between normal projective varieties.  Let
\(\Gamma_\pi\) be the normalization of the closure of its graph in \(X\times Y\), with the natural projection morphisms
\[
 p_\pi:\Gamma_\pi\longrightarrow X,
 \qquad
 q_\pi:\Gamma_\pi\longrightarrow Y.
\]
Note that \(p_\pi\) is birational and \(q_\pi\) is surjective.  Following
\cite{MengZhang}, we say that \(\pi\) has \emph{connected fibers} if \(q_\pi\) has connected fibers.  By Stein factorization, this is equivalent to the condition that \(\bC(Y)\) is relatively algebraically closed in \(\bC(X)\).

A dominant morphism \(q:Z\to Y\) between irreducible varieties is called
\emph{equidimensional} if every irreducible component of every fiber of \(q\) has dimension \(\dim Z-\dim Y\). 

\begin{lem}\label{lem:equidimensional-normalization}
Let $q:Z\to Y$ be a dominant morphism of irreducible varieties, and let $\nu:Z^\nu\to Z$ be the normalization. Then $q$ is equidimensional if and only if $q\circ\nu:Z^\nu\to Y$ is equidimensional.
\end{lem}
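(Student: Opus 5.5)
The plan is to compare fibers of $q$ and of $q\circ\nu$ directly, using only that the normalization $\nu:Z^\nu\to Z$ is finite and surjective. (Here $Z$ is an irreducible variety, so $\nu$ is finite and birational.) We also use the standard fact that a finite morphism preserves dimensions of closed subvarieties. Set $n:=\dim Z-\dim Y=\dim Z^\nu-\dim Y$; this makes sense because $\dim Z^\nu=\dim Z$. By upper semicontinuity of fiber dimension (Chevalley), every irreducible component of every nonempty fiber of a dominant morphism from an irreducible variety has dimension at least $n$. Hence in both directions it suffices to show that all fiber components have dimension at most $n$.

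Fix $y\in Y$ and write $F:=q^{-1}(y)$ and $F^\nu:=(q\circ\nu)^{-1}(y)=\nu^{-1}(F)$.
\begin{itemize}
\item The restriction $\nu|_{F^\nu}:F^\nu\to F$ is finite, since finiteness is stable under base change to the closed subscheme $F$.
\item It is surjective, since $\nu$ is.
\end{itemize}
Consequently each irreducible component $C'$ of $F^\nu$ maps onto a closed irreducible subset $\nu(C')\subset F$ with $\dim\nu(C')=\dim C'$. Conversely, each irreducible component $C$ of $F$ is the image of some irreducible component of $\nu^{-1}(C)\subset F^\nu$ of the same dimension. This holds because $\nu^{-1}(C)\to C$ is finite and surjective, so some irreducible component of $\nu^{-1}(C)$ dominates $C$, and that component has dimension $\dim C$.

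Suppose first that $q$ is equidimensional. Then each component $C'$ of $F^\nu$ satisfies $\dim C'=\dim\nu(C')\le\dim F\le n$, so $q\circ\nu$ is equidimensional. Suppose instead that $q\circ\nu$ is equidimensional. For a component $C$ of $F$, choose $C''\subset\nu^{-1}(C)\subset F^\nu$ irreducible with $\nu(C'')=C$. Then $\dim C=\dim C''\le\dim F^\nu\le n$, because every component of $F^\nu$ has dimension $n$. Combined with the lower bound, $q$ is equidimensional.

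There is no real obstacle here. The one point that needs care is the claim that a finite surjective map onto an irreducible $C$ admits an irreducible component of the source dominating $C$. This follows because the source is a finite union of irreducible components whose images are closed and together cover $C$, so one image must equal $C$.
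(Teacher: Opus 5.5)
Your proof is correct and follows the same route as the paper: every fiber component has dimension at least $\dim Z-\dim Y$ by the fiber-dimension theorem, and $\nu$ restricts to finite surjections on fibers, so fiber dimensions are the same before and after normalization. You simply spell out the component-by-component comparison that the paper compresses into the phrase ``finite surjections preserve the total dimension of each fiber.''
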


\begin{proof}
 Put $d=\dim Z-\dim Y$. For a dominant morphism of irreducible varieties, one can use the upper semicontinuity of local fiber dimension to show that every component of every nonempty fiber has dimension at least $d$. On the other hand, finite surjections preserve the total dimension of each fiber. Therefore, every fiber is pure of dimension $d$ before normalization if and only if this holds afterwards.
\end{proof}

Now let us introduce the notion of Chow reduction due to Nakayama~\cite[Prop.~4.14, Def.~4.15]{Nakayama}; see also \cite[Prop.~7.1]{MengZhang}.
\begin{propdef}[Nakayama~\cite{Nakayama}]\label{prop:chow-reduction-def}
Let \(\pi:X\dashrightarrow Y\) be a dominant rational map from a projective variety
\(X\) to a normal projective variety \(Y\).  There exist a normal projective variety
\(T\) and a birational map
\[
 \mu:Y\dashrightarrow T
\]
such that \(\mu\circ\pi\) is equidimensional.  Moreover, this model is minimal in the following sense. If \(\mu':Y\dashrightarrow T'\) is another birational model, with \(T'\) normal projective and \(\mu'\circ\pi\) equidimensional, then there is a birational morphism \(\nu:T'\to T\) such that \(\mu=\nu\circ\mu'\). The rational map
\[
 \mu\circ\pi:X\dashrightarrow T
\]
is called the \emph{Chow reduction} of \(\pi\); it is unique up to isomorphism.
\end{propdef}

We shall use the following reformulation of Nakayama's result:
\begin{thm}[{Nakayama~\cite[Th.~4.19]{Nakayama}}, {Meng--Zhang~\cite[Th.~7.2]{MengZhang}}]\label{thm:chow-reduction-endomorphism}
Let \(\pi:X\dashrightarrow Y\) be a dominant rational map between normal projective varieties with connected fibers. Let \(f:X\to X\) be a surjective endomorphism and let \(g:Y\dashrightarrow Y\) 
be a dominant rational self-map such that \(g\circ\pi=\pi\circ f\). 
If \(\pi\) is its own Chow reduction, then \(g\) is a surjective endomorphism of
\(Y\).
\end{thm}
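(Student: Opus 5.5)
Since the statement is attributed to Nakayama and Meng--Zhang, I will not reprove it from scratch; I will reduce it to Proposition/Definition~\ref{prop:chow-reduction-def} together with a rigidity property of equidimensional fibrations. The goal is to show that $g$ has no indeterminacy and is surjective. Surjectivity is automatic once $g$ is a morphism, since $g\circ\pi=\pi\circ f$ is dominant and $Y$ is projective. So the content is regularity of $g$.

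First, I look at the composition $\pi\circ f:X\dashrightarrow Y$. It is dominant and equidimensional. Indeed, take graph models $\Gamma_\pi$ and $\Gamma_{\pi\circ f}$. The fibre of $\pi\circ f$ over $y$ is $f^{-1}$ of the fibre of $\pi$ over $y$, taken at the level of normalized graphs. Since $f$ is finite (a surjective endomorphism of a projective variety is finite), preimages of equidimensional fibres stay equidimensional of the same dimension. Lemma~\ref{lem:equidimensional-normalization} handles the passage to normalizations.

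Next, I Stein-factorize $\pi\circ f = g\circ\pi$. Because $\pi$ has connected fibers, $\bC(Y)$ is algebraically closed in $\bC(X)$. Hence $\pi\circ f$ factors as $X\dashrightarrow Y'\to Y$, where $Y'$ is the normalization of $Y$ in the relative algebraic closure of $g^*\bC(Y)$ inside $f^*\bC(X)$. Identifying $f^*\bC(X)\cong \bC(X)$, the connected-fibre part $X\dashrightarrow Y'$ is, via $f$, just $\pi$ twisted by a birational map $\mu:Y\dashrightarrow Y'$. So we may write $g=\phi\circ\mu$ with $\phi:Y'\to Y$ finite. The fibration $\mu\circ\pi$ is equidimensional, because its fibres are components of fibres of the equidimensional $\pi\circ f$ composed with the finite $\phi$.

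Then the minimality in Proposition/Definition~\ref{prop:chow-reduction-def} applies. Since $\pi$ is its own Chow reduction and $\mu\circ\pi$ is an equidimensional birational model, there is a birational morphism $\nu:Y'\to Y$ with $\mathrm{id}=\nu\circ\mu$. So $\mu^{-1}=\nu$ is a morphism. Symmetrically, $Y'$ is itself a Chow reduction of the same fibration, and uniqueness up to isomorphism makes $\nu$ an isomorphism. Hence $g=\phi\circ\nu^{-1}$ is a morphism, finite and surjective.

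The main obstacle is the second step: making rigorous that the connected part of the Stein factorization of $\pi\circ f$ is birational to $\pi$ through $f$, and that equidimensionality passes to it. I would handle this by working on the normalized graph $\Gamma_\pi$ and lifting $f$ to a finite map between normalized graphs. Where this becomes technical, I would simply invoke \cite[Th.~4.19]{Nakayama} and \cite[Th.~7.2]{MengZhang}, which is how the statement is sourced.
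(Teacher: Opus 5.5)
\textbf{There is a genuine gap in your third step.}

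Your first two steps are essentially sound, up to one slip. The relative algebraic closure must be taken in $\bC(X)$, not in $f^*\bC(X)$. Inside $f^*\bC(X)$ the field $f^*\pi^*\bC(Y)$ is already closed, which would force $\phi=\mathrm{id}$. With that corrected, the argument runs as follows:
\begin{itemize}
\item $f\times\mathrm{id}_Y$ maps the graph of $\pi\circ f$ finitely into the graph of $\pi$, so $\pi\circ f=g\circ\pi$ is equidimensional.
\item Its Stein factorization is $\mu\circ\pi$ followed by a finite $\phi:Y'\to Y$, with $g=\phi\circ\mu$ and $\mu\circ\pi$ equidimensional.
\item Minimality then gives a birational morphism $\nu:Y'\to Y$ with $\nu\circ\mu=\mathrm{id}_Y$.
\end{itemize}

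The gap is the next sentence. Minimality of the Chow reduction only ever produces morphisms \emph{from} an equidimensional model \emph{to} $Y$. Saying that $Y'$ is ``itself a Chow reduction'' amounts to a birational morphism $Y\to Y'$ inverse to $\nu$, i.e.\ that $\mu$ is regular. That is exactly what has to be proved, so there is no symmetry to invoke.

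In fact the data you extract cannot suffice. Take $X=\bP^2\times\bP^1$ and $\pi$ the first projection. This $\pi$ is its own Chow reduction, since a birational morphism from $\bP^2$ to a normal projective variety is an isomorphism. Let $Y'=\mathrm{Bl}_p\bP^2$ with blow-down $\nu$, take any finite $\phi:Y'\to\bP^2$, and set $g:=\phi\circ\nu^{-1}$. Then:
\begin{itemize}
\item $g\circ\pi$ is equidimensional, since its normalized graph is $Y'\times\bP^1\to\bP^2$;
\item its Stein factor is $Y'$;
\item $Y'$ is an equidimensional model dominating $Y$.
\end{itemize}
Yet $g$ is not regular at $p$, because $\phi$ does not contract the exceptional curve. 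What excludes this is the existence of a \emph{morphism} $f$ with $\pi\circ f=g\circ\pi$. Your argument uses $f$ only to get equidimensionality of $g\circ\pi$, which this example also has.

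The missing idea is a way to use $f$ to build a map \emph{out of} $Y$. The standard route goes through the Chow variety.
\begin{itemize}
\item The Chow reduction $Y$ is the normalization of the closure $V$ of the image of $y\mapsto[\pi^{-1}(y)]$ in $\mathrm{Chow}(X)$. The fibre-cycle map is a morphism on all of $Y$ because $\pi$ is equidimensional over a normal base.
\item Proper push-forward $f_*$ acts algebraically on Chow varieties.
\item For general $y$ one has $f_*[\pi^{-1}(y)]=e\,[\pi^{-1}(g(y))]$ with $e$ constant.
\end{itemize}
So $g$ is induced by $f_*$ on $V$ and lifts to the normalization.

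Equivalently, in your notation, the fibre-cycle map $c_{Y'}$ of $\mu\circ\pi$ satisfies $c_{Y'}=c_Y\circ\nu$ and $f_*\circ c_{Y'}=(e\cdot)\circ c_Y\circ\phi$. The second identity shows $c_{Y'}$ is finite. The first then shows $\nu$ is finite and birational onto the normal $Y$, hence an isomorphism. This is the honest version of your ``symmetry'' claim.

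So the real difficulty sits in your third step, not in the second as you suggested. The paper itself gives no proof and simply imports the theorem from Nakayama and Meng--Zhang, so falling back on the citation matches the paper. The reduction you sketch, however, does not prove the statement on its own.
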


We shall also use the following related lemmas. 
\begin{lem}\label{lem:invariant-subfield-chow}
Let \(X\) be a normal projective variety, let \(f:X\to X\) be a surjective endomorphism, and let \(\bC\subsetneq L\subsetneq \bC(X)\) be a finitely generated subfield over \(\bC\) which is relatively algebraically closed in \(\bC(X)\) and satisfies \(f^*L\subset L\). 
Then there exist a normal projective variety \(Y\), a dominant rational map with connected fibers \(\pi:X\dashrightarrow Y\) and a surjective endomorphism \(g:Y\to Y\) 
such that
\[
 g\circ\pi=\pi\circ f,\quad \bC(Y)=L,
\]
and \(\pi\) is its own Chow reduction.
\end{lem}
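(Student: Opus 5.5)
We realize the field \(L\) geometrically, then replace the model by its Chow reduction, and finally invoke Theorem~\ref{thm:chow-reduction-endomorphism}.

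\textbf{Step 1: a model of \(L\) and the induced self-map.} Since \(L\) is finitely generated over \(\bC\), choose a normal projective variety \(Y_0\) with \(\bC(Y_0)=L\). For instance, take the normalization of the closure of the image of \(X\dashrightarrow\bP^m\) given by generators \(1,h_1,\dots,h_m\) of \(L\) inside \(\bC(X)\). The inclusion \(L\subset\bC(X)\) defines a dominant rational map \(\pi_0:X\dashrightarrow Y_0\). Since \(\bC\subsetneq L\subsetneq\bC(X)\), we have \(0<\dim Y_0\le\dim X\).

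Because \(L\) is relatively algebraically closed in \(\bC(X)\), the Stein factorization remark in \S\ref{subsec:chow-reduction} shows that \(\pi_0\) has connected fibers. The condition \(f^*L\subset L\) gives a field embedding \(f^*|_L:L\to L\). It is injective because \(f^*\) is injective on \(\bC(X)\), since \(f\) is surjective. This embedding corresponds to a dominant rational self-map \(g_0:Y_0\dashrightarrow Y_0\) with \(g_0\circ\pi_0=\pi_0\circ f\). To check the identity, note that both sides induce the same map \(L\to\bC(X)\), namely \(h\mapsto f^*h\).

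\textbf{Step 2: pass to the Chow reduction.} Apply Proposition/Definition~\ref{prop:chow-reduction-def} to \(\pi_0\). This yields a normal projective \(Y\) and a birational map \(\mu:Y_0\dashrightarrow Y\) such that \(\pi:=\mu\circ\pi_0\) is equidimensional and is the Chow reduction. Set \(g:=\mu\circ g_0\circ\mu^{-1}\), a dominant rational self-map of \(Y\).

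Since \(\mu\) is birational, \(\bC(Y)=L\) under the identification by \(\pi^*\), and \(\pi\) still has connected fibers. The equivariance \(g\circ\pi=\pi\circ f\) holds as an identity of rational maps.

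It remains to see that \(\pi\) is its own Chow reduction. Apply Proposition/Definition~\ref{prop:chow-reduction-def} to \(\pi\) itself. The minimality property, together with uniqueness up to isomorphism, shows that the Chow reduction of \(\pi\) coincides with that of \(\pi_0\), which is \(\pi\). Concretely, any equidimensional model \(T'\) of \(\pi\) is also one for \(\pi_0\), so it maps by a birational morphism to \(Y\).

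\textbf{Step 3: conclude.} Theorem~\ref{thm:chow-reduction-endomorphism} now applies to \(\pi\), \(f\) and \(g\). It shows that \(g\) is a surjective endomorphism of \(Y\), which completes the argument.

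\textbf{Main obstacle.} The delicate point is Step 2, namely that \(\pi\) is its own Chow reduction and that the Chow reduction depends only on the birational class of the target. The hypotheses of Proposition/Definition~\ref{prop:chow-reduction-def} require a projective source, and \(X\) is projective, so they are satisfied. Uniqueness then follows from the minimality property. The remaining steps are routine translations between field inclusions and dominant rational maps.
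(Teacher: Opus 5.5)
Your proposal is correct and follows the paper's proof essentially step by step. You choose a normal projective model \(Y_0\) of \(L\), induce \(g_0\) from \(f^*L\subset L\), pass to the Chow reduction \(\mu\), set \(g=\mu\circ g_0\circ\mu^{-1}\), and conclude with Theorem~\ref{thm:chow-reduction-endomorphism}; your minimality argument that \(\pi=\mu\circ\pi_0\) is its own Chow reduction is a valid detail that the paper leaves implicit.
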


\begin{proof}
Fix a normal projective variety \(Y_0\) with function field \(L\). The inclusion \(L\subset \bC(X)\) induces a dominant rational map \(\pi_0:X\dashrightarrow Y_0\). 
Since \(L\) is relatively algebraically closed in \(\bC(X)\), the map \(\pi_0\) has connected fibers.  The inclusion \(f^*L\subset L\) induces a dominant rational self-map \(g_0:Y_0\dashrightarrow Y_0\) satisfying \(g_0\circ\pi_0=\pi_0\circ f\). 

Let \(\mu:Y_0\dashrightarrow Y\) be the Chow reduction of \(\pi_0\), and let
\[
 \pi:=\mu\circ\pi_0,
 \quad
 g:=\mu\circ g_0\circ\mu^{-1}.
\]
Then \(\bC(Y)=L\), so \(\pi\) still has connected fibers. Theorem~\ref{thm:chow-reduction-endomorphism} therefore shows that
\(g\) is a surjective endomorphism.
\end{proof}

Recall that an endomorphism \(f:X\to X\) is called \emph{\(q\)-polarized} if there are an integer \(q\geq 2\) and an ample Cartier divisor \(H\) on \(X\) such that \(f^*H\sim qH\). The following is an immediate application of a theorem of Meng--Zhang:
\begin{lem}[{Meng--Zhang~\cite[Theorem~1.3]{MengZhangPolarized}}]\label{lem:polarization-descent}
In the setting of Lemma~\ref{lem:invariant-subfield-chow}, suppose that \(f\) is \(q\)-polarized. Then \(g\) is \(q\)-polarized.
\end{lem}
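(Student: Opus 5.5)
The plan is to reduce to Meng--Zhang's theorem on descent of polarization, \cite[Theorem~1.3]{MengZhangPolarized}. In the form we need, it says: if $f:X\to X$ is a $q$-polarized endomorphism of a projective variety and $\pi:X\to Y$ is a dominant equivariant morphism onto a projective variety with $g\circ\pi=\pi\circ f$, then $g$ is $q$-polarized. The only work is to place ourselves in this setting, since in Lemma~\ref{lem:invariant-subfield-chow} the map $\pi$ is only rational.

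First, pass to the graph. Let $\Gamma:=\Gamma_\pi$ be the normalization of the closure of the graph, with $p:=p_\pi:\Gamma\to X$ birational and $q_\pi:\Gamma\to Y$ surjective. Since $g\circ\pi=\pi\circ f$ with $f$ and $g$ both morphisms, $f\times g$ maps the graph of $\pi$ into itself. It therefore maps the closure $\overline{\Gamma}\subset X\times Y$ onto itself, since it is finite and the dimensions agree. By the universal property of normalization it lifts to a surjective endomorphism $f_\Gamma:\Gamma\to\Gamma$ satisfying $p\circ f_\Gamma=f\circ p$ and $q_\pi\circ f_\Gamma=g\circ q_\pi$.

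The obstacle is that $p^*H$ is only nef and big, not ample, so $f_\Gamma$ is not obviously polarized. I see two remedies.

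\begin{enumerate}
\item Use the version of Meng--Zhang's theorem stated for an equivariant dominant rational map. This is exactly the form of \cite[Theorem~1.3]{MengZhangPolarized}, which allows $\pi$ to be rational as long as $g$ is a genuine endomorphism. Their proof works on the graph and uses the fact that $f^*H\equiv qH$ with $H$ ample forces all eigenvalues of $g^*$ on $N^1(Y)$ to have modulus $q$. Together with $g^*$ being semisimple on the relevant part, this gives an ample class $H_Y$ with $g^*H_Y\equiv qH_Y$. One then upgrades numerical equivalence to linear equivalence, after replacing $H_Y$ within its class, because $g^*-q$ is invertible on $\Pic^0(Y)$.
\item Check the hypotheses directly. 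Here $\pi$ has connected fibers and is its own Chow reduction, and $g$ is a surjective endomorphism by Theorem~\ref{thm:chow-reduction-endomorphism}. These are precisely the inputs Meng--Zhang require.
\end{enumerate}

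So the proof amounts to verifying that the data produced in Lemma~\ref{lem:invariant-subfield-chow} satisfy the hypotheses of \cite[Theorem~1.3]{MengZhangPolarized}. The step to watch is whether the cited theorem needs $\pi$ to be a morphism. If it does, I would fall back on $q_\pi:\Gamma\to Y$ and replace ampleness of the polarization on $\Gamma$ by the big-and-nef class $p^*H$. One then checks that their eigenvalue argument only uses $(f_\Gamma)^*(p^*H)\sim q\,p^*H$ together with bigness, so that pushing forward gives a big class $B$ on $Y$ with $g^*B\equiv qB$. Ampleness would then come from Meng--Zhang's result that for a surjective endomorphism, a big eigen-class with eigenvalue $q>1$ equal to the spectral radius can be moved to an ample one.
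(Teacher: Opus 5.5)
Your remedy (1) is exactly what the paper does: it gives no separate argument and simply invokes \cite[Theorem~1.3]{MengZhangPolarized}, which is already stated for an equivariant dominant \emph{rational} map onto a projective variety carrying a genuine surjective endomorphism $g$, and Lemma~\ref{lem:invariant-subfield-chow} supplies exactly these hypotheses. Your fallback is therefore unnecessary, and its step of pushing a divisor class forward along $q_\pi$ would not produce a divisor class on $Y$, because the fibers of $q_\pi$ are positive-dimensional; the correct fallback is that $f_\Gamma$ is quasi-polarized by $p^*H$, hence polarized by Meng--Zhang's quasi-polarized-implies-polarized result, and one then descends along the morphism $q_\pi$.
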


\subsection{Divisorial postcritical finiteness on a normal variety}\label{subsec:divisor-PCF-normal-variety}

Let \(f:X\to X\) be a finite surjective endomorphism of a normal projective variety. If \(E\subset X\) is a prime Weil divisor and \(D=f(E)\), then the local rings at the generic points of \(E\) and \(D\) are discrete valuation rings, and we denote by \(e_f(E)\) the ramification index of the induced extension of discrete valuations. We define the \emph{ramification divisor} as
\[
 R_f^{(1)}:=\sum_E\bigl(e_f(E)-1\bigr)E.
\]
It is a Weil divisor encoding the codimension-one part of the ramification. We say that the morphism \(f\) is \emph{quasi-étale} if it is étale at every codimension-one point, equivalently if \(R_f^{(1)}=0\). 

\begin{defi}\label{def:divisorial-PCF-definition}
The \emph{divisorial postcritical set} of \(f\) is
\[
 P_f^{(1)}:=\bigcup_{n\geq 1} f^n\bigl(\operatorname{Supp}R_f^{(1)}\bigr).
\]
We say that \(f\) is \emph{divisorially postcritically finite}, or \emph{divisorially
PCF}, if \(P_f^{(1)}\) is a finite union of prime divisors.
\end{defi}
\begin{rem}
When \(X\) is smooth, this coincideds with the usual definition of postcritical finiteness. 
\end{rem}

By construction \(f\) is quasi-étale outside \(R_f^{(1)}\). To obtain open subsets on which \(f\) is genuinely étale, we adjoin the singular locus as follows. We define, for \(n\in \bN\), the subsets
\begin{equation}\label{eq:definition-adjoin-locus}
\Delta(f)=\Delta_0(f):=\operatorname{Sing}(X)\cup P_f^{(1)},
\quad \Delta_n(f):=f^{-n}(\Delta(f)),
\quad U_n(f):=X\setminus\Delta_n(f).
\end{equation}
We denote by \(D_n(f)\) the reduced Weil divisor of \(X\) whose prime components are precisely the codimension-one irreducible components of \(\Delta_n(f)\).

\begin{lem}\label{lem:singular-boundary-forward}
Let \(f:X\to X\) be a finite surjective endomorphism of a normal projective variety.
Then
\[
 f\bigl(\operatorname{Sing}X\bigr)
 \subset
 \operatorname{Sing}X\cup f\bigl(\operatorname{Supp}R_f^{(1)}\bigr).
\]
Consequently, if \(f\) is divisorially PCF, then
\[
 f\bigl(\Delta(f)\bigr)\subset\Delta(f).
\]
\end{lem}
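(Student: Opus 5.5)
The plan is to argue by contradiction, using purity of the branch locus (Zariski--Nagata). Let \(x\in\operatorname{Sing}X\) and put \(y=f(x)\). Suppose that \(y\notin\operatorname{Sing}X\) and \(y\notin f(\operatorname{Supp}R_f^{(1)})\); the aim is to show that \(x\) is then a smooth point, which is absurd.

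First I build a suitable neighbourhood of \(y\). Since \(f\) is finite, it is a closed map. Hence \(f(\operatorname{Supp}R_f^{(1)})\) and \(\operatorname{Sing}X\) are closed subsets of \(X\). So there is a Zariski open neighbourhood \(V\) of \(y\) that is smooth and disjoint from \(f(\operatorname{Supp}R_f^{(1)})\). Set \(W:=f^{-1}(V)\), an open set containing \(x\). The restriction \(f|_W:W\to V\) is finite and surjective, \(W\) is normal, and \(V\) is regular. Since \(W\cap\operatorname{Supp}R_f^{(1)}=\emptyset\), every codimension-one point of \(W\) has ramification index \(1\). The residue field extensions are trivial because we are over \(\bC\), so \(f|_W\) is étale at all codimension-one points of \(W\), i.e.\ it is quasi-étale.

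The key step is to upgrade quasi-étale to étale. I would invoke Zariski--Nagata purity of the branch locus: a finite surjective morphism from a normal variety onto a regular variety is étale wherever it is unramified in codimension one. Equivalently, its branch locus in the regular target is either empty or of pure codimension one. This is where the hypothesis that \(V\) is smooth is used, and it is the only nontrivial input. One small point to check is that the branch locus of \(f|_W\) in \(V\) has no divisorial part. This holds because any prime divisor of \(V\) contained in the branch locus would be the image of a ramified prime divisor of \(W\), and there are none. Hence \(f|_W\) is étale. An étale cover of a smooth variety is smooth, so \(W\) is smooth and in particular \(x\) is a smooth point, a contradiction. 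This proves the first inclusion.

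For the consequence, assume \(f\) is divisorially PCF. By definition \(f(\operatorname{Supp}R_f^{(1)})\subset P_f^{(1)}\) and \(f(P_f^{(1)})=\bigcup_{n\ge2}f^n(\operatorname{Supp}R_f^{(1)})\subset P_f^{(1)}\). Combining this with the first part gives
\[
f(\Delta(f))=f(\operatorname{Sing}X)\cup f(P_f^{(1)})\subset \operatorname{Sing}X\cup P_f^{(1)}=\Delta(f).
\]
I expect no real obstacle beyond correctly citing purity, which requires a regular target and a normal source, both of which we have.
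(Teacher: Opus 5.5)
Your proposal is correct and follows the paper's proof essentially step for step. You assume \(f(x)\) is smooth and lies outside \(f(\operatorname{Supp}R_f^{(1)})\), restrict to a smooth neighbourhood \(V\) avoiding that closed set, use Zariski--Nagata purity to upgrade the quasi-\'etale finite map \(f^{-1}(V)\to V\) to an \'etale one, and conclude that \(x\) is smooth; the forward-invariance argument for \(\Delta(f)\) is the same as the paper's. One small slip: the residue field extensions at codimension-one points are not trivial in general (they are the finite extensions \(\bC(E)/\bC(f(E))\)), but they are separable in characteristic zero, which is all that unramifiedness needs.
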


\begin{proof}
Let \(x\in\operatorname{Sing}X\) and \(y=f(x)\). Suppose that \(y\) is smooth and
that \(y\notin f(\operatorname{Supp}R_f^{(1)})\). Since \(f\) is finite, the latter set is Zariski-closed. Hence there is a Zariski-open neighborhood \(V\) of \(y\), contained in the regular locus \(X_{\mathrm{reg}}\), such that no codimension-one ramification locus meets \(f^{-1}(V)\). Denote 
\[
g:=f|_{f^{-1}(V)}:f^{-1}(V)\to V.
\]
The morphism \(g\) is finite, its source is normal, and its target is regular. Since \(g\) is quasi-\'etale, it is unramified at every
codimension-one point of \(f^{-1}(V)\). Hence the Zariski--Nagata purity theorem \cite[Th.~X.3.1]{SGA1} implies that \(g\) is \'etale. 
Since \(V\) is regular and
regularity is preserved under \'etale morphisms, \(f^{-1}(V)\) is regular,
contradicting \(x\in\operatorname{Sing}X\). Thus
\[
f\bigl(\operatorname{Sing}X\bigr)
\subset
\operatorname{Sing}X\cup
f\bigl(\operatorname{Supp}R_f^{(1)}\bigr).
\]

Assume that \(f\) is divisorially PCF. Since \(P_f^{(1)}\) contains \(f(\operatorname{Supp}R_f^{(1)})\) and is forward invariant, we obtain
\[
f\bigl(\Delta(f)\bigr)
=
f\bigl(\operatorname{Sing}X\cup P_f^{(1)}\bigr)
\subset
\operatorname{Sing}X\cup P_f^{(1)}
=
\Delta(f).
\]
\end{proof}

\begin{lem}\label{lem:etale-tower}
Assume that \(f\) is divisorially PCF.  Then for every \(n\) we have
\[
 \Delta_n(f)\subset\Delta_{n+1}(f),
 \quad
 U_{n+1}(f)\subset U_n(f),
\]
and the restriction \(f:U_{n+1}\to U_n\) is finite étale. Consequently, \(f^k:U_{n+k}\to U_n\) is finite étale for every \(n,k\geq 0\).
\end{lem}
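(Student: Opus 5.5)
The plan is to derive the inclusions from the forward invariance in Lemma~\ref{lem:singular-boundary-forward}, and then prove étaleness on the first level by Zariski--Nagata purity, exactly as in that lemma.

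\emph{Step 1: the inclusions.} By Lemma~\ref{lem:singular-boundary-forward} we have $f(\Delta(f))\subset\Delta(f)$, hence $\Delta(f)\subset f^{-1}(\Delta(f))=\Delta_1(f)$. Applying $f^{-n}$ to this inclusion gives $\Delta_n(f)\subset\Delta_{n+1}(f)$, and taking complements gives $U_{n+1}(f)\subset U_n(f)$.

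\emph{Step 2: the restriction is finite étale for $n=0$.} We have $f^{-1}(U_0)=X\setminus\Delta_1=U_1$. Since $f$ is finite, the restriction $f:U_1\to U_0$ is the base change of a finite morphism over the open set $U_0$, so it is finite and surjective. The target $U_0$ is regular because it avoids $\operatorname{Sing}X$. The source $U_1$ is open in $X$, hence normal.

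We next check that $f|_{U_1}$ is quasi-étale. Let $E$ be a prime divisor of $X$ meeting $U_1$ with $e_f(E)>1$. Then $E\subset\operatorname{Supp}R_f^{(1)}$, so $f(E)\subset P_f^{(1)}\subset\Delta(f)$. This gives $E\subset f^{-1}(\Delta)=\Delta_1$, contradicting $E\cap U_1\neq\emptyset$. So $f|_{U_1}$ is unramified in codimension one.

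Purity \cite[Th.~X.3.1]{SGA1} then applies to a finite morphism from a normal scheme to a regular one that is unramified in codimension one, and shows that $f:U_1\to U_0$ is étale.

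\emph{Step 3: the general level.} For $n\ge0$ we have $U_{n+1}=f^{-1}(U_n)$, since $f^{-1}(X\setminus\Delta_n)=X\setminus\Delta_{n+1}$. Because $U_n\subset U_0$, the map $f:U_{n+1}\to U_n$ is the restriction of the finite étale map $U_1\to U_0$ over the open set $U_n$. Being a base change, it is again finite étale. Composing these maps gives $f^k:U_{n+k}\to U_n$ finite étale for all $n,k\ge0$.

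The only genuinely nontrivial point is the use of purity in Step 2, which requires a regular target; that is why $\operatorname{Sing}X$ was adjoined to $\Delta$. Everything else is formal.
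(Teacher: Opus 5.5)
Your proof is correct and follows essentially the paper's argument. The inclusions come from the forward invariance \(f(\Delta(f))\subset\Delta(f)\) of Lemma~\ref{lem:singular-boundary-forward}. \'Etaleness holds because any ramification of \(f\) on \(U_{n+1}\) must be divisorial by purity (the target being regular), and would therefore be mapped into \(f(\operatorname{Supp}R_f^{(1)})\subset P_f^{(1)}\subset\Delta(f)\).

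The only difference is organizational: you prove \'etaleness at level \(0\) and then base change along \(U_n\subset U_0\), while the paper argues pointwise at each level, using that both \(x\) and \(f(x)\) are smooth points.
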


\begin{proof}
Lemma~\ref{lem:singular-boundary-forward} implies that \(\Delta_f\subset f^{-1}(\Delta_f)\). Pulling back yields
\(\Delta_n(f)\subset\Delta_{n+1}(f)\). We have also \(f^{-1}(U_n)=U_{n+1}\). Let \(x\in U_{n+1}\) and \(y=f(x)\). Since
\(\operatorname{Sing}X\subset\Delta\subset\Delta_n\), both \(x\) and \(y\) are smooth. The smoothness implies that if \(f\) were ramified at \(x\) then the point \(x\) would lie in
\(\operatorname{Supp}R_f^{(1)}\). Hence
\[
 y\in f\bigl(\operatorname{Supp}R_f^{(1)}\bigr)
 \subset P_f^{(1)}\subset\Delta(f)\subset\Delta_n(f),
\]
contrary to \(y\in U_n(f)\). Thus the restriction morphism is étale.
\end{proof}

In terms of the Weil divisors \(D_n(f)\), we can also reformulate some information of Lemma~\ref{lem:etale-tower} in the following way. 
\begin{cor}\label{cor:divisorial-tower}
Assume that \(f\) is divisorially PCF.  Then
\[
 D_n(f)\subset D_{n+1}(f),
 \quad
 D_{n+1}(f)=\bigl(f^{*}D_n(f)\bigr)_{\mathrm{red}},
 \quad 
\operatorname{Supp}R_f^{(1)}\subset D_1(f).
\]
In particular, if the numbers of irreducible components of the \(D_n\) are bounded,
then the sequence stabilizes to a reduced divisor \(D_\infty\) satisfying
\[
 f^{-1}(D_\infty)=D_\infty,
 \qquad
 \operatorname{Supp}R_f^{(1)}\subset D_\infty.
\]
\end{cor}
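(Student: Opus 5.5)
The three claims follow from Lemma~\ref{lem:etale-tower} together with an analysis of codimension-one components. After that, the stabilization statement is a counting argument.

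\emph{Inclusion $D_n\subset D_{n+1}$.} Lemma~\ref{lem:etale-tower} gives $\Delta_n(f)\subset\Delta_{n+1}(f)$. Both sets are closed subsets of $X$, and $\Delta_{n+1}=f^{-1}(\Delta_n)$ has pure codimension at most one in the relevant sense. If $E$ is a codimension-one irreducible component of $\Delta_n$, then $E\subset\Delta_{n+1}$. So $E$ lies in some irreducible component $C$ of $\Delta_{n+1}$. Since $\Delta_{n+1}\neq X$ (its complement $U_{n+1}$ is nonempty, as $f$ is finite and $U_n$ is a nonempty open set), we get $\dim C\le N-1=\dim E$. Hence $C=E$, and so $D_n\subset D_{n+1}$.

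\emph{Equality $D_{n+1}=(f^*D_n)_{\mathrm{red}}$.} Since $f$ is finite and surjective, $f^{-1}$ of a prime divisor is a union of prime divisors. These are exactly the components of $\operatorname{Supp}f^*D_n$, and $f^{-1}$ of a closed subset of codimension at least two has codimension at least two, because finite maps preserve dimension. Writing $\Delta_n=D_n\cup Z_n$ with $\operatorname{codim}Z_n\ge2$, we obtain
\[
\Delta_{n+1}=f^{-1}(D_n)\cup f^{-1}(Z_n).
\]
Here $f^{-1}(D_n)$ is pure of codimension one and equals $\operatorname{Supp}f^*D_n$, while $f^{-1}(Z_n)$ has codimension at least two. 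Taking codimension-one components gives the equality. One needs $f^*$ of a Weil divisor on a normal variety to make sense; I would define it via the generic points, or simply work with $f^{-1}$ set-theoretically.

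\emph{Ramification inclusion.} Let $E$ be a component of $R_f^{(1)}$. Then $f(E)$ is a prime divisor contained in $P_f^{(1)}\subset\Delta$, so $f(E)$ is a component of $D_0$. Hence $E\subset f^{-1}(D_0)$, and $E$ is a component of $D_1$ by the previous step.

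\emph{Stabilization.} The sets of components of the $D_n$ form an increasing chain. If their number is bounded, the chain is eventually constant, say $D_n=D_{n+1}=D_\infty$ for all $n\ge n_0$. Then
\[
f^{-1}(D_\infty)=\operatorname{Supp}f^*D_{n_0}=D_{n_0+1}=D_\infty,
\]
and $\operatorname{Supp}R_f^{(1)}\subset D_1\subset D_\infty$.

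The only mildly delicate point is the equality step, namely checking that $f^{-1}$ maps the codimension-two part to codimension at least two. This holds because $f$ is finite, so fiber dimensions are zero.
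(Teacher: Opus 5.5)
Your proof is correct and takes the same approach as the paper, which gives no separate argument and presents the corollary as an immediate reformulation of Lemma~\ref{lem:etale-tower}; you supply the dimension count it leaves implicit. One correction: the purity of \(f^{-1}\) of a prime divisor, which you need for \(f^{-1}(D_\infty)=\operatorname{Supp}f^*D_{n_0}\), follows from the normality of \(X\) (via going-down, or the openness of \(f\) used in Lemma~\ref{lem:finite-closure}), not from finiteness and surjectivity alone, and this step, rather than the codimension-two part, is the genuinely delicate one.
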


\subsection{Chow reduction of a divisorial PCF map}\label{subsec:chow-of-pcf}

\begin{lem}\label{lem:multiplicity-one-open}
Let \(q:Z\to Y\) be a dominant projective morphism of normal complex varieties, and
assume that \(\bC(Y)\) is relatively algebraically closed in \(\bC(Z)\). Then there is a nonempty Zariski-open subset \(Y^\circ\subset Y\) with the following property. If \(D\subset Y\) is a prime divisor whose generic point belongs to \(Y^\circ\), and if \(E\subset Z\) is a prime divisor dominating \(D\), then, for a uniformizer \(t\) of the discrete valuation ring \(\mathcal O_{Y,\eta_D}\), we have \(\operatorname{ord}_E(q^*t)=1\).
\end{lem}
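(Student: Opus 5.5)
We will show that the generic fiber of \(q\) is geometrically reduced, and then spread this property out over an open subset of \(Y\). Let \(K=\bC(Y)\) and let \(Z_K\) be the generic fiber of \(q\). It is a normal integral projective \(K\)-variety with function field \(\bC(Z)\). Because \(K\) is relatively algebraically closed in \(\bC(Z)\) and we are in characteristic zero (so \(\bC(Z)/K\) is separable), \(Z_K\) is geometrically integral. In particular, the geometric generic fiber is reduced.

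\textbf{Spreading out.} By generic flatness and the openness of the geometrically reduced locus for fibers of flat proper morphisms (EGA IV, 12.2.4), there is a nonempty open \(Y^\circ\subset Y\) over which \(q\) is flat with geometrically reduced (indeed geometrically integral) fibers. Shrinking further, we may assume \(Y^\circ\) is smooth. We claim this \(Y^\circ\) works.

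\textbf{Computing the order.} Let \(D\) be a prime divisor with \(\eta_D\in Y^\circ\), and let \(E\) be a prime divisor of \(Z\) dominating \(D\). Base change to \(R:=\mathcal O_{Y,\eta_D}\), a DVR with uniformizer \(t\) and residue field \(\kappa=\bC(D)\). Then \(Z_R\to\operatorname{Spec}R\) is flat and proper, and its special fiber \(Z_\kappa\) is reduced, since it is a geometrically reduced fiber over a point of \(Y^\circ\); reducedness at the non-closed point \(\eta_D\) follows from the fact that geometric reducedness of fibers holds at all points of \(Y^\circ\), not only closed ones. By flatness, \(E\) maps onto \(D\); since \(E\) has codimension one and \(q\) is equidimensional over \(Y^\circ\), the generic point \(\eta_E\) is a generic point of an irreducible component of \(Z_\kappa\). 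The local ring \(\mathcal O_{Z,\eta_E}\) is a DVR because \(Z\) is normal, and
\[
\mathcal O_{Z,\eta_E}/(q^*t)=\mathcal O_{Z_\kappa,\eta_E}.
\]
This ring is Artinian local and reduced, hence a field. Therefore \(q^*t\) generates the maximal ideal, that is, \(\operatorname{ord}_E(q^*t)=1\).

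\textbf{Main obstacle.} The delicate step is the openness statement, which must deliver geometric reducedness at the non-closed point \(\eta_D\). Fortunately the EGA result gives openness of the set of points \(y\in Y\) (all scheme points, closed or not) whose fiber is geometrically reduced. Since this set contains the generic point, it contains a nonempty open set, and this handles the issue directly. If one prefers to avoid EGA, an alternative is to argue with a constructible-set argument on closed points combined with Chevalley's theorem. In that approach the reducedness of \(Z_\kappa\) at \(\eta_E\) is checked via the multiplicity of a general curve in \(D\), using that general closed fibers are reduced.
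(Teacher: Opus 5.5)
Your proposal is correct and follows the same route as the paper. The steps are the same: geometric integrality of the generic fiber from relative algebraic closedness; generic flatness plus EGA~IV, 12.2.4, to get an open \(Y^\circ\) whose fibers are reduced, including over non-closed points such as \(\eta_D\); and reading \(\operatorname{ord}_E(q^*t)\) off the fiber over \(\eta_D\). Your local computation \(\mathcal O_{Z,\eta_E}/(q^*t)=\mathcal O_{Z_\kappa,\eta_E}\), an Artinian reduced local ring and hence a field, makes explicit the multiplicity-one step that the paper states in one line.
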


\begin{proof}
The relative algebraic closedness of \(\bC(Y)\) in \(\bC(Z)\) implies that the generic fiber of \(q\) is geometrically integral. By the generic property of flatness~\cite[Th.~6.9.1]{EGAIV} and openness of geometric reducedness~\cite[Th.~12.2.4]{EGAIV} we obtain a nonempty open subset \(Y^\circ\) over which \(q\) is flat with geometrically reduced fibers. For \(D\) and \(E\) as in the statement, \(\operatorname{ord}_E(q^*t)\) is the multiplicity of the component determined by \(E\) in the fiber over the generic point of \(D\). Since that fiber is reduced, this multiplicity is one.
\end{proof}

\begin{prop}\label{prop:pcf-descent}
Let \(X\) and \(Y\) be normal projective varieties.  Let \(f:X\to X, g:Y\to Y\)
be finite surjective endomorphisms and let
\(\pi:X\dashrightarrow Y\) be a dominant rational map with connected fibers such that \(g\circ\pi=\pi\circ f\). Assume that \(\pi\) is its own Chow reduction.  If \(f\) is divisorially PCF, then \(g\) is divisorially PCF.
\end{prop}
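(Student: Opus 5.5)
We show that every prime divisor of \(Y\) lying in \(\operatorname{Supp}R_g^{(1)}\) or in one of its forward images is, apart from finitely many exceptions, the closure of the image under \(\pi\) of a component of \(P_f^{(1)}\); since \(P_f^{(1)}\) has finitely many components, \(P_g^{(1)}\) is then a finite union of prime divisors. Concretely, let \(\Gamma=\Gamma_\pi\) with \(p=p_\pi\), \(q=q_\pi\). Because \(\pi\) is its own Chow reduction, \(q\) is equidimensional, so every prime divisor of \(\Gamma\) maps either onto \(Y\) or onto a prime divisor of \(Y\), and every prime divisor \(D\subset Y\) is dominated by at least one prime divisor of \(\Gamma\). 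Let \(Y^\circ\) be the open set of Lemma~\ref{lem:multiplicity-one-open} applied to \(q\). Put \(B:=Y\setminus Y^\circ\), together with the finitely many prime divisors of \(Y\) that are images of \(p\)-exceptional divisors of \(\Gamma\); call this finite set of prime divisors \(\mathcal B\).

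\textbf{Step 1 (ramification of \(g\) comes from ramification of \(f\)).} Let \(D\subset\operatorname{Supp}R_g^{(1)}\) with \(D\notin\mathcal B\) and \(g(D)\notin\mathcal B\). Pick a prime divisor \(E\subset\Gamma\) dominating \(D\); it is not \(p\)-exceptional, so \(E\) corresponds to a prime divisor \(E_X=p(E)\) of \(X\), and \(\pi\) is defined at its generic point. Let \(F_X=f(E_X)\); then \(F_X\) dominates \(g(D)\). Comparing valuations for a uniformizer \(t\) at \(g(D)\):
\[
\operatorname{ord}_{E_X}(f^*\pi^*t)=\operatorname{ord}_{E_X}(\pi^*g^*t)=e_g(D)\operatorname{ord}_{E_X}(\pi^*t_D)=e_g(D),
\]
using Lemma~\ref{lem:multiplicity-one-open} at \(D\). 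The left side equals \(e_f(E_X)\operatorname{ord}_{F_X}(\pi^*t)=e_f(E_X)\), using the same lemma at \(g(D)\). Hence \(e_f(E_X)=e_g(D)\geq 2\), so \(E_X\subset\operatorname{Supp}R_f^{(1)}\) and \(D=\overline{\pi(E_X)}\).

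\textbf{Step 2 (forward orbits).} For \(n\geq1\), the equivariance gives \(g^n(D)=\overline{\pi(f^n(E_X))}\), because \(f^n(E_X)\) is a divisor on which \(\pi\) is generically defined (after discarding its exceptional images) and it dominates \(g^n(D)\) by the equidimensionality of \(q\). Thus every prime divisor in \(\bigcup_n g^n(\operatorname{Supp}R_g^{(1)})\) is either among the finitely many \(\overline{\pi(C)}\), with \(C\) a component of \(P_f^{(1)}\) that is not contracted by \(\pi\), or arises from orbits that meet \(\mathcal B\).

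\textbf{Step 3 (orbits meeting \(\mathcal B\); the main obstacle).} The difficulty is that \(\mathcal B\) is finite but its \(g\)-forward orbit need not be, and a ramified \(D\) might lie in \(\mathcal B\). To handle this, first replace \(\mathcal B\) by a \(g\)-backward-invariant enlargement so that its complement is preserved by \(g^{-1}\); \(g\) is finite, so \(g^{-1}\) of a finite set of divisors is finite, but iterating may not terminate. Instead I would argue in the other direction. For a divisor \(D'\) in the forward orbit of a ramified divisor, choose the last time the orbit lies outside \(\mathcal B\cup g^{-1}\mathcal B\), and run Step 1 from that point. The set of \(g\)-orbits entering \(\mathcal B\) is then controlled by ramification above \(\mathcal B\) itself, which is finitely many divisors, each with a finite forward orbit unless it is periodic-free. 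If this bookkeeping fails, I would fall back on degree counting with an ample \(H\) (or the polarization, when available), in the spirit of Proposition~\ref{prop:lattes-pcf}: \(g^*\) multiplies degrees, which bounds how many distinct ramification-image divisors can occur. I expect this step, ensuring that the exceptional set \(\mathcal B\) does not generate infinitely many postcritical divisors, to be the main technical point.
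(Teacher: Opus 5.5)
Your Steps 1 and 2 are correct. They are the special case \(m_i=m_{i+1}=1\) of the valuation identity the paper uses, and they show that a ramified \(D\) with \(D,g(D)\notin\mathcal B\) has finite forward orbit. But Step 3 is a genuine gap, and it is the heart of the proposition. Since \(\operatorname{Supp}R_g^{(1)}\) has finitely many components, what remains is exactly the finitely many ramified \(D_0\) with \(D_0\in\mathcal B\) or \(g(D_0)\in\mathcal B\). None of your remedies handles them:
\begin{itemize}
\item Restarting Step 1 once the orbit stays outside \(\mathcal B\cup g^{-1}\mathcal B\) fails, because \(D_k=g^k(D_0)\) is in general not ramified for \(g\). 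The index \(e_g(D_0)\ge 2\) can be absorbed into a fiber multiplicity \(m_1=\operatorname{ord}_{E_1}(q^*t_1)>1\) over a divisor of \(\mathcal B\). Then no ramification of \(g\) or \(f\) need be visible at time \(k\); from \(b_i m_{i+1}=a_i m_i\), the ramification may reappear only upstairs, at an intermediate step.
\item Degree counting is not available. The proposition assumes no polarization, and even with one, nothing bounds the number of distinct \(g^n(D_0)\); the Latt\`es count relied on \(L\) being \'etale.
\item If \(D_0\) is dominated only by \(p\)-exceptional divisors, there is no \(E_X\subset X\) to start from at all.
\end{itemize}

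The missing idea is that in the valuation identity only the \emph{target} multiplicity needs to be \(1\); the source multiplicity \(m_0\ge 1\) only helps. So you should compare iterates, or equivalently telescope along the orbit. Set it up as follows:
\begin{itemize}
\item Lift \(f\times g\) to a finite \(F:\Gamma\to\Gamma\) by normalizing the invariant closure of the graph.
\item Assume the \(D_i=g^i(D_0)\) are pairwise distinct. Then \(m_n=1\) for \(n\gg 0\) by Lemma~\ref{lem:multiplicity-one-open}.
\item Take \(E_0\subset\Gamma\) dominating \(D_0\).
\end{itemize}
If \(E_0\) is \(p\)-exceptional, so is every \(F^i(E_0)\), since \(p\circ F=f\circ p\) and \(f\) is finite. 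There are only finitely many such divisors, so \(\{D_i\}=\{q(F^i(E_0))\}\) is finite. Otherwise, evaluating \(q\circ F^n=g^n\circ q\) along \(E_0\) gives
\[
e_{F^n}(E_0)\,m_n=e_{g^n}(D_0)\,m_0\ge 2,
\]
hence \(e_{f^n}(p(E_0))\ge 2\). So some \(f^j(p(E_0))\) with \(j<n\) lies in \(\operatorname{Supp}R_f^{(1)}\), and your Step 2 then shows that \(\{D_i\}_{i>j}\) is finite. In both cases the orbit is finite, contradicting pairwise distinctness.

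The paper packages this by first proving that \(F\) is divisorially PCF. It then multiplies the relations \(b_im_{i+1}=a_im_i\) for \(i<n\) to force \(e_F(E_j)>1\) for some \(j\).
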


\begin{proof}
Let \(\Gamma\) be the normalization of the closure of the graph of \(\pi\), with projections
\[
 p:\Gamma\to X, \quad q:\Gamma\to Y.
\]
Then \(p\) is birational and \(q\) is equidimensional because \(\pi\) is its own Chow reduction. The finite morphism
\(f\times g:X\times Y\to X\times Y\) preserves the closure of the graph of \(\pi\). 
By the universal property of normalization, it therefore induces a dominant endomorphism
\begin{equation}\label{eq:graph-equivariance}
F:\Gamma\to\Gamma, \quad \text{satisfying} \quad p\circ F=f\circ p, \; q\circ F=g\circ q.
\end{equation}

Let us show that \(F\) is a finite morphism. 
Since \(f\times g\) is finite, its restriction
to the closure of the graph is finite. Denoting \(h\) this restriction and \(\nu\) the normalization, we obtain that \(\nu\circ F=h\circ\nu\) is finite. It follows that \(F\) has finite fibers, because \(F^{-1}(z)\subset(\nu\circ F)^{-1}(\nu(z))\) for every
\(z\in\Gamma\). 

We now show that \(F\) is divisorially PCF.  Let \(E\subset\Gamma\) be a ramification
prime divisor of \(F\). We distinguish two cases depending on whether \(E\) is \(p\)-exceptional. If \(E\) is \(p\)-exceptional, then \(F(E)\) is again \(p\)-exceptional. Indeed, \(F(E)\) is a prime divisor because \(F\) is finite, while \(p(F(E))=f(p(E))\) has codimension at least two. Since a birational morphism has only finitely many exceptional prime divisors, the forward orbit of \(E\) is finite. Suppose now that \(E\) is not \(p\)-exceptional. Then \(D:=p(E)\) is a prime
divisor of \(X\). Since \(f(D)\) is again a divisor, \eqref{eq:graph-equivariance} implies that \(F(E)\) is not \(p\)-exceptional.
Since \(p\) is birational and \(X\) and \(Z\) are normal, it is an isomorphism at the generic point of every nonexceptional prime divisor. In particular, \(\operatorname{ord}_E(p^*t)=1\) for a uniformizer \(t\) at the generic point \(\eta_D\). By multiplicativity of the indices, \(p\circ F=f\circ p\) implies \(e_F(E)=e_f(D)\). Thus \(D\subset\operatorname{Supp}R_f^{(1)}\).  Repeating the same argument we deduce that
\(F^j(E)\) is the strict transform of \(f^j(D)\) for every \(j\ge0\). Since \(f\) is
divisorially PCF, only finitely many divisors \(f^j(D)\) occur. Hence the forward orbit of \(E\) is finite, and \(F\) is divisorially PCF.

Now let \(D_0\subset Y\) be a ramification prime divisor of \(g\), and denote
\[
 D_i:=g^i(D_0), \quad i\geq 0.
\]
Suppose for contradiction that the \(D_i\) are pairwise distinct. Since \(Y\) is normal, \(D_0\) is Cartier in a neighborhood of its generic point. Hence there exists a prime divisor \(E_0\subset\Gamma\) dominating \(D_0\). Denote
\[
 E_i:=F^i(E_0), \quad i\geq 0.
\]
Then \(q(E_i)=D_i\) for every \(i\). Let \(t_i\) be a uniformizer at the generic point of \(D_i\) and denote
\[
 m_i:=\operatorname{ord}_{E_i}(q^*t_i),
 \quad
 a_i:=e_g(D_i),
 \quad
 b_i:=e_F(E_i).
\]
Evaluating the identity \(q\circ F=g\circ q\) on the divisorial valuations, we obtain
\begin{equation}\label{eq:ramification-index-relation-prelim}
 b_i m_{i+1}=a_i m_i.
\end{equation}
Recall that \(D_0\) is ramified by choice, so \(a_0\geq 2\).

As \(\pi\) has connected fibers,
\(\bC(Y)\) is relatively algebraically closed in \(\bC(\Gamma)=\bC(X)\). By Lemma~\ref{lem:multiplicity-one-open}, there is a nonempty open subset
\(Y^\circ\subset Y\) such that \(m_i=1\) whenever the generic point of \(D_i\) lies in
\(Y^\circ\).  The complement \(Y\setminus Y^\circ\) contains only finitely many prime
divisors.  Since the \(D_i\) are pairwise distinct by our contradiction hypothesis, it follows that \(m_i=1\) for all sufficiently large \(i\). Multiplying
\eqref{eq:ramification-index-relation-prelim} for \(i=0,\ldots,n-1\) we get
\[
 m_n\prod_{i=0}^{n-1}b_i
 =
 m_0\prod_{i=0}^{n-1}a_i.
\]
For sufficiently large \(n\), we have \(m_n=1\), while the right-hand
side is at least \(2m_0\).  Hence \(b_j>1\) for some \(j\), in other words \(E_j\) is a ramification prime divisor of \(F\). Its forward orbit is finite because we already proved that \(F\) is divisorially PCF. 
Therefore \(D_{j+k}=q(F^k(E_j))\) has only finitely many possibilities, contradicting the assumption that the \(D_i\) are pairwise distinct. Thus every ramification prime divisor of \(g\) has finite forward
orbit, i.e.\ \(g\) is divisorially PCF.
\end{proof}

\subsection{Reduction of PCF maps}\label{subsec:reduction-of-pcf}

Let \(X\) be a normal rationally connected projective variety of dimension \(m\), and let
\(f:X\to X\) be a \(q\)-polarized divisorially PCF endomorphism. Let \(r_n\) be the number of prime components of \(D_n(f)\). Assume that the sequence \((r_n)\) is unbounded. Then by Corollary~\ref{cor:divisorial-tower}, we have
\(\lim r_n=+\infty\). 

We choose a resolution of singularities \(\mu:\widetilde X\to X\) which is an
isomorphism over the regular locus \(X_{\mathrm{reg}}\). Since rational connectedness is a birational invariant by definition, \(\widetilde X\) is rationally connected. Therefore by \cite[Cor.~4.12]{DebarreBook} we have \(\coh^1(\widetilde X,\mathcal O_{\widetilde X})=0\), so \(\operatorname{Pic}^0(\widetilde X)=0\). It follows that \(\Pic(\widetilde X)\) is a finitely generated abelian group. 

Recall that \(\operatorname{Sing}(X)\subset\Delta_n(f)\), so we have
\(U_n(f)\subset X_{\mathrm{reg}}\). Hence \(\mu\) restricts to an isomorphism
\(\widetilde U_n:=\mu^{-1}(U_n(f))\to U_n(f)\). Let \(B_n\) be the reduced
divisor on \(\widetilde X\) formed by the codimension-one irreducible components
of \(\widetilde X\setminus\widetilde U_n\). As \(X\) is normal, its singular locus has codimension \(\geq 2\). Therefore every irreducible component of \(D_n(f)\)
has a strict transform among the components of \(B_n\). We then deduce that the number of irreducible components of \(B_n\) is at least \(r_n\). 

\begin{lem}\label{lem:unit-growth}
For every \(n\in \bN\), the group \(M_n:=\mathcal O(U_n(f))^*/\bC^*\) is a finitely generated
free abelian group. Moreover, \(\lim\operatorname{rank}M_n=+\infty\).
\end{lem}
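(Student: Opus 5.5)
The plan is to compute \(M_n\) through the smooth model \(\widetilde U_n\cong U_n(f)\) and the standard exact sequence relating units, boundary divisors and the Picard group of \(\widetilde X\). Since \(\widetilde X\) is a smooth projective variety and \(\widetilde U_n\) is open in it, a unit \(u\in\mathcal O(\widetilde U_n)^*\) is a rational function on \(\widetilde X\) whose divisor is supported on \(\widetilde X\setminus \widetilde U_n\). Components of codimension \(\geq 2\) do not matter, so \(\operatorname{div}(u)\) is supported on the components of \(B_n\). This gives the exact sequence
\[
0\longrightarrow \bC^*\longrightarrow \mathcal O(\widetilde U_n)^*\xrightarrow{\ \operatorname{div}\ } \bigoplus_{E\subset B_n}\bZ\cdot E\longrightarrow \Pic(\widetilde X)\longrightarrow \Pic(\widetilde U_n)\longrightarrow 0 .
\]
Exactness on the left holds because a unit with zero divisor is a regular invertible function on the projective variety \(\widetilde X\), hence constant.

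Thus \(M_n=\mathcal O(U_n(f))^*/\bC^*\) embeds into the free abelian group \(\bigoplus_{E\subset B_n}\bZ E\). That group has finite rank, since \(B_n\) has finitely many components. A subgroup of a finitely generated free abelian group is finitely generated and free, which gives the first claim.

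For the rank, let \(b_n\) be the number of components of \(B_n\). The kernel of \(\bigoplus\bZ E\to\Pic(\widetilde X)\) is exactly the image of \(M_n\). As recalled before the lemma, \(\Pic^0(\widetilde X)=0\), so \(\Pic(\widetilde X)\) is a finitely generated abelian group; let \(\rho\) be its rank, which is independent of \(n\). Then
\[
\operatorname{rank} M_n\ \geq\ b_n-\rho\ \geq\ r_n-\rho,
\]
using \(b_n\geq r_n\), which was established just before the lemma via strict transforms of the components of \(D_n(f)\). Since \(r_n\to+\infty\) by the standing assumption together with Corollary~\ref{cor:divisorial-tower}, we get \(\operatorname{rank}M_n\to+\infty\).

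No step here is genuinely hard. The only points needing care are that \(\mathcal O(U_n(f))^*=\mathcal O(\widetilde U_n)^*\), which holds via the isomorphism \(\mu\), and that \(\Pic(\widetilde X)\) has finite rank. The latter is exactly where rational connectedness enters, through \(\coh^1(\widetilde X,\mathcal O_{\widetilde X})=0\).
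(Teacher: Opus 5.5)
Your proposal is correct and follows the paper's proof essentially verbatim. It uses the same exact sequence relating units on \(\widetilde U_n\), the components of \(B_n\), and \(\Pic(\widetilde X)\), deduces freeness and finite generation of \(M_n\) from its embedding into \(\bigoplus_{E\subset B_n}\bZ E\), and obtains the same bound \(\operatorname{rank}M_n\geq r_n-\operatorname{rank}\Pic(\widetilde X)\), combined with \(r_n\to+\infty\).
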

\begin{proof}
A non vanishing rational function on \(U_n(f)\) is a rational function on \(\widetilde X\) whose divisor is supported on \(B_n\). 
We have an exact sequence
\[
\bC^*\simeq \mathcal O(\widetilde X)^*\rightarrow
\mathcal O(\widetilde U_n)^*
\xrightarrow{\operatorname{divisor}}
\bigoplus_{E\in \operatorname{Irr}(B_n)}\bZ[E]
\rightarrow \Pic(\widetilde X),
\]
where \(\operatorname{Irr}(B_n)\) is the set of irreducible components of \(B_n\) and the third map sends a divisor \(D\) to \(\mathcal O_{\widetilde X}(D)\). Recall that we explained above that \(\Pic(\widetilde X)\) is a finitely generated abelian group. Consequently,
\[
\operatorname{rank}M_n\geq r_n-\operatorname{rank}\Pic(\widetilde X).
\]
Therefore \(M_n\) is isomorphic to a subgroup of a finitely generated free abelian group, hence is itself finitely generated and free. 
The limit \(\lim\operatorname{rank}M_n=+\infty\) follows from \(\lim r_n=+\infty\). 
\end{proof}

In the sequel of this subsection we will denote \(K:=\bC(X)\). For \(n\in \bN\) let \(K_n\subset K\) be the subfield generated over
\(\bC\) by the elements of \(\mathcal O(U_n(f))^*\). Each field \(K_n\) is finitely generated over \(\bC\). The inclusion \(U_{n+1}(f)\subset U_n(f)\) implies that
\(K_n\subset K_{n+1}\). Let \(K_\infty:=\bigcup_{n\geq0}K_n\) and
\(t\) be the transcendence degree of \(K_\infty\) over \(\bC\). By
Lemma~\ref{lem:unit-growth}, some \(U_n(f)\) has a nonconstant nonvanishing function. Hence
\(t\geq 1\). Recall that \(m\) is the dimension of \(X\), so \(m\geq t\). 

\begin{lem}\label{lem:invariant-field}
Assume that \(t<m\). There exists a subfield
\(L\subset\bC(X)\) which is finitely generated over \(\bC\) with transcendence degree \(t\), is relatively algebraically closed in \(\bC(X)\), and satisfies \(f^*L\subset L\). 
\end{lem}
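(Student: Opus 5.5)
Our candidate for \(L\) is the relative algebraic closure \(\overline{K_\infty}\) of \(K_\infty\) inside \(K=\bC(X)\). Since \(K\) is finitely generated over \(\bC\), any intermediate field \(\bC\subset L'\subset K\) is finitely generated over \(\bC\); this is the standard fact that subextensions of finitely generated field extensions are finitely generated. Hence \(L:=\overline{K_\infty}\) is finitely generated over \(\bC\). It is algebraic over \(K_\infty\), so \(\operatorname{trdeg}_{\bC}L=t\), and it is relatively algebraically closed in \(K\) by construction. Moreover \(t<m\) forces \(L\neq K\), and \(t\ge1\) forces \(L\neq\bC\). So \(L\) sits in the range required by Lemma~\ref{lem:invariant-subfield-chow}, and the only thing left to verify is the invariance \(f^*L\subset L\).

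For the invariance, we first show \(f^*K_n\subset K_{n+1}\). By Lemma~\ref{lem:etale-tower}, \(f^{-1}(U_n(f))=U_{n+1}(f)\), so \(f\) restricts to a morphism \(U_{n+1}(f)\to U_n(f)\). Therefore, for any unit \(u\in\mathcal O(U_n(f))^*\), the pullback \(f^*u=u\circ f\) is a regular nowhere-vanishing function on \(U_{n+1}(f)\), i.e. an element of \(\mathcal O(U_{n+1}(f))^*\). Since \(K_n\) is generated over \(\bC\) by such units and \(f^*\) is a field homomorphism fixing \(\bC\), we get \(f^*K_n\subset K_{n+1}\). Taking the union over \(n\) gives \(f^*K_\infty\subset K_\infty\).

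It remains to pass to the algebraic closure. Let \(x\in L\). Then \(x\) is algebraic over \(K_\infty\), so \(x\) satisfies a nonzero polynomial \(P\) with coefficients in \(K_\infty\). Applying the field endomorphism \(f^*\) of \(K\) shows that \(f^*x\) satisfies \(f^*P\), which is nonzero and has coefficients in \(f^*K_\infty\subset K_\infty\). Thus \(f^*x\) is an element of \(K\) algebraic over \(K_\infty\), hence lies in \(L\). This gives \(f^*L\subset L\).

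The only step needing care is the finite generation of \(L\) over \(\bC\), since \(K_\infty\) itself need not be finitely generated. This is handled entirely by the field-theoretic fact quoted above, applied to \(L\subset K\). Alternatively, one can take a transcendence basis \(s_1,\dots,s_t\) of \(K_\infty\) over \(\bC\), inside some \(K_{n_0}\). Then \(L\) equals the algebraic closure of \(\bC(s_1,\dots,s_t)\) in \(K\), which is a finite extension of \(\bC(s_1,\dots,s_t)\) because \(K\) is finitely generated over it.
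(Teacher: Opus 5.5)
Your proof is correct and is essentially the paper's argument. Since $K_\infty$ is algebraic over $K_{n_0}$ (where $\operatorname{trdeg}_{\bC}K_{n_0}=t$), your $L$ coincides with the paper's relative algebraic closure of $K_{n_0}$; in both, invariance follows from $f^*K_n\subset K_{n+1}$ and transitivity of algebraicity, and your explicit checks of this inclusion and of finite generation supply details the paper only asserts.
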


\begin{proof}
The sequence \(\operatorname{trdeg}_{\bC}K_n\) is nondecreasing and has
supremum \(t\). Hence there exists \(n_0\) such that
\(\operatorname{trdeg}_{\bC}K_{n_0}=t\). It follows that
\(K_n/K_{n_0}\) is algebraic for every \(n\geq n_0\).

Let \(L\) be the relative algebraic closure of \(K_{n_0}\) in \(K=\bC(X)\). It is a finite extension of \(K_{n_0}\), and has transcendence degree \(t\). Since \(1\leq t<m\), it is a proper nonconstant subfield of \(K\). 

We have \(f^*K_{n_0}\subset K_{n_0+1}\), while \(K_{n_0+1}/K_{n_0}\) is algebraic.
If \(a\in L\), then \(a\) is algebraic over \(K_{n_0}\), so \(f^*a\) is
algebraic over \(K_{n_0+1}\), thus algebraic over \(K_{n_0}\). Hence \(f^*L\subset L\).
\end{proof}

\begin{prop}\label{prop:pcf-rational-fibration}
Let \(X\) be a normal rationally connected projective variety of dimension
\(m\), and let \(f:X\to X\) be a \(q\)-polarized divisorially PCF
endomorphism. Suppose that \(\operatorname{trdeg}_{\bC}K_\infty<m\). Then there exist a normal rationally connected projective variety \(Y\), with \(0<\dim Y<m\), a dominant rational map with connected fibers
\(\pi:X\dashrightarrow Y\), and a \(q\)-polarized divisorially PCF
endomorphism \(g:Y\to Y\) such that \(g\circ\pi=\pi\circ f\). Moreover, \(\pi\) is its own Chow reduction.
\end{prop}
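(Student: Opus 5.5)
The plan is to chain together the lemmas of this section. First, $\operatorname{trdeg}_{\bC}K_\infty=t\geq 1$. This holds because, by hypothesis, \(\operatorname{trdeg}_{\bC}K_\infty<m\) is finite. If the ranks \(r_n\) stay bounded, we instead verify \(t\geq1\) directly from the standing assumption of this subsection that \((r_n)\) is unbounded, which Lemma~\ref{lem:unit-growth} turns into nonconstant units on some \(U_n(f)\). So \(1\leq t<m\), and Lemma~\ref{lem:invariant-field} produces a finitely generated subfield \(L\subset\bC(X)\) of transcendence degree \(t\). This \(L\) is relatively algebraically closed in \(\bC(X)\) and satisfies \(f^*L\subset L\). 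Since \(1\leq t<m\), we have \(\bC\subsetneq L\subsetneq\bC(X)\).

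Next, apply Lemma~\ref{lem:invariant-subfield-chow} to \(L\). This gives a normal projective \(Y\) with \(\bC(Y)=L\), so \(\dim Y=t\in(0,m)\). It also gives a dominant rational map \(\pi:X\dashrightarrow Y\) with connected fibers that is its own Chow reduction, and a surjective endomorphism \(g\) with \(g\circ\pi=\pi\circ f\). Lemma~\ref{lem:polarization-descent} then shows that \(g\) is \(q\)-polarized.

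To use Proposition~\ref{prop:pcf-descent} we need \(f\) and \(g\) to be finite. Surjective endomorphisms of normal projective varieties that are polarized are finite, since the pullback of an ample divisor being ample forbids contracted curves. The proposition therefore applies and gives that \(g\) is divisorially PCF.

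Finally, \(Y\) is rationally connected, since it is the image of the rationally connected variety \(X\) under a dominant rational map. Concretely, the graph \(\Gamma_\pi\) is birational to \(X\), hence rationally connected, and it surjects onto \(Y\) via \(q_\pi\). Rational connectedness is preserved by surjective morphisms, because images of connecting rational curves are rational curves or points.

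The only delicate points are checking the hypotheses of the cited results rather than any genuine obstacle:
\begin{itemize}
\item finiteness of \(g\), which follows from polarization as above;
\item the lower bound \(t\geq1\), which comes from the unit growth in Lemma~\ref{lem:unit-growth};
\item the requirement that \(L\) be relatively algebraically closed, which is exactly what ensures the connected-fiber hypothesis in Proposition~\ref{prop:pcf-descent}.
\end{itemize}
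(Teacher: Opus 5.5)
Your proposal is correct and follows the paper's proof step for step: Lemma~\ref{lem:invariant-field} gives $L$, Lemma~\ref{lem:invariant-subfield-chow} gives $(Y,\pi,g)$ with $\pi$ its own Chow reduction, and Lemma~\ref{lem:polarization-descent} and Proposition~\ref{prop:pcf-descent} give polarization and divisorial postcritical finiteness; rational connectedness then passes to the dominant image, and your check that $g$ is finite because it is polarized is a correct detail the paper leaves implicit. Your first paragraph should be cleaned up, though: finiteness of $\operatorname{trdeg}_{\bC}K_\infty$ says nothing about $t\geq 1$, and there is no bounded case to handle, since the sufficient reason is the standing assumption that $(r_n)$ is unbounded combined with Lemma~\ref{lem:unit-growth}, exactly as in the paper.
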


\begin{proof}
Let \(L\) be the field furnished by Lemma~\ref{lem:invariant-field}. Lemma~\ref{lem:invariant-subfield-chow} then provides a normal projective variety \(Y\), a dominant rational map with connected fibers
\(\pi:X\dashrightarrow Y\), and a surjective endomorphism \(g:Y\to Y\) such
that \(\bC(Y)=L\), \(g\circ\pi=\pi\circ f\), and \(\pi\) is its own Chow
reduction. It follows from the definition of rational connectedness that \(Y\) is rationally connected as the image under a dominant rational map of a rationally connected variety. We have \(\dim Y=t\), so \(0<\dim Y<m\). By Lemma~\ref{lem:polarization-descent}, the map \(g\) is \(q\)-polarized. Proposition~\ref{prop:pcf-descent} then implies that \(g\) is divisorially PCF. 
\end{proof}

\section{Semi-abelian varieties and hyperbolicity}\label{sec:semiabelian-hyperbolicity}
Some algebro-geometric and complex-analytic properties of semi-abelian varieties are crucial for our work. In this section we prepare an introduction to semi-abelian varieties and to some deep hyperbolicity
results that will be used later. 

\subsection{Semi-abelian varieties and compactifications}\label{subsec:semiabelian}

A \emph{semi-abelian variety} is a connected commutative algebraic group $A$
which is a unique extension of an abelian variety by an algebraic torus.  In other words,
there is a defining exact sequence of algebraic groups
\begin{equation}\label{eq:semiabelian-extension}
  0\longrightarrow T\longrightarrow A\longrightarrow A_0\longrightarrow 0,
\end{equation}
where $A_0$ is an abelian variety and $T= \bG_m^r\simeq (\bC^*)^r$ for some $r\geq 0$.
We refer to {\cite[Appendix~A]{Yamanoi2026}} for background on semi-abelian varieties.

An \emph{equivariant compactification} of a semi-algebraic variety $A$ is a complete algebraic variety $\overline A$ together with an open immersion $A\hookrightarrow \overline A$ such that the translation action $A\times A\to A$ extends to an algebraic action
\[
A\times \overline A\longrightarrow \overline A.
\]
Every semi-abelian variety admits a smooth projective equivariant
compactification.  One convenient construction is as follows. Consider the exact sequence \eqref{eq:semiabelian-extension}, choose a smooth projective toric compactification $\overline T$ of $T$, and form the categorical quotient
\[
  \overline A:=(A\times \overline T)/T,
\]
where $t\in T$ acts diagonally on \(A\times \overline T\) by $t\cdot(a,x)=(a-t,t\cdot x)$; see \cite[Lem.~A.2]{Yamanoi2026} for the well definedness of this quotient.  Then $\overline A\to A_0$ is a locally trivial
fiber space with fiber isomorphic to $\overline T$, the open subset $A=A\times^T T$ is dense in $\overline A$, and translations by $A$ extend to $\overline A$.  If $\overline T$ is chosen smooth and projective, then so is
$\overline A$. We refer to \cite{Vojta1999,Yamanoi2026} for more details about such compactifications.

\subsection{Hyperbolicity results}\label{subsec:hyperbolicity-results}

Let $M$ be a complex manifold, let $x\in M$ and let $v\in T_xM$ be a non-zero vector. Denote by $\operatorname{Hol}(\bD,M)$ the set of holomorphic maps from $\bD$ to $M$. Recall that the \emph{Kobayashi--Royden infinitesimal pseudometric} is defined as
\begin{equation}\label{eq:kobayashi-royden}
  F_M(v)
  :=\inf\left\{r>0\ \mid \ \exists f\in \operatorname{Hol}(\bD,M),\ 
  f(0)=x,\ r f'(0)=v\right\}.
\end{equation}
Thi pseudometric is decreased by
holomorphic maps between complex manifolds.  The path pseudodistance obtained by integrating $F_M$ is the Kobayashi pseudodistance, and $M$ is said to be \emph{Kobayashi hyperbolic} if the Kobayashi pseudodistance is a distance. Remark that the existence of an entire curve $f:\bC\to M$ (a non-constant holomorphic map) with $f'(0)=v$ forces $F_M(v)=0$. The above notions also make sense for singular complex spaces. We refer to \cite{Kobayashi1998} for the standard theory of Kobayashi pseudodistance.

Next we introduce some notions that will be used in Yamanoi's theorem. 

\begin{defi}\label{tautness}
Let $W$ be a relatively compact open subset of a complex space $M$, and let $\Delta\subset M$ be closed. We say that $W$ is \emph{tautly embedded modulo $\Delta$ in $M$} if for every
sequence $f_n\in \mathrm{Hol}(\bD,W)$ one of the following alternatives holds:
\begin{enumerate}[label=\textup{(\roman*)}]
\item (\emph{converging subsequence}) there is a subsequence $f_{n_k}$ converging uniformly on compact subsets
of $\bD$ to a holomorphic map $f:\bD\to M$;
\item (\emph{compactly divergent subsequence}) for every compact $K\subset \bD$ and every compact
$L\subset M\setminus\Delta$, there exists $n_0$ such that
$f_n(K)\cap L=\varnothing$ for all $n\geq n_0$.
\end{enumerate}    
\end{defi}

In other words, if $W$ is tautly embedded modulo $\Delta$, then a failure of normality is allowed only by escaping, locally uniformly on the source, toward the exceptional set $\Delta$. 
Let $d_W$ be the Kobayashi pseudodistance of $W$.  We say that $W$ is \emph{hyperbolically embedded modulo $\Delta$ in $M$} if, for any distinct $p,q\in\overline W$ not both in $\Delta$, there are neighborhoods $V_p,V_q\subset M$ such that
\[
 \inf\{d_W(x,y):x\in V_p\cap W,\ y\in V_q\cap W\}>0.
\]
For $\Delta=\varnothing$ this is the usual notion of a hyperbolic embedding.  The theorem of Kiernan--Kobayashi~\cite{KiernanKobayashi1973} states that taut embedding modulo $\Delta$ implies hyperbolic embedding modulo $\Delta$ (see \cite[Th.~5.1.13]{Kobayashi1998}).

A complex algebraic variety $X$ is \emph{Kobayashi hyperbolic modulo $\Delta$} if its Kobayashi pseudodistance satisfies $d_X(x,y)>0$ for distinct $x,y\in X$ not both in $\Delta$.  It is \emph{pseudo-Kobayashi hyperbolic} if this holds for some proper Zariski closed subset $\Delta\subsetneq X$.\\[.1cm]


Let $X\subset A$ be a closed algebraic subvariety of a semi-abelian variety $A$.
Let $\overline A$ be a smooth equivariant compactification, and let
$\overline X$ be the Zariski closure of $X$ in $\overline A$. Following \cite{Yamanoi2026}, we define
\begin{align}\label{eq:yamanoi-special-boundary-set}
  Z_{\overline X}   :=\Bigl\{x\in \overline X \mid {} &\exists\text{ a semi-abelian
 subvariety }B\subset A\text{ such that}\notag\\ 
  &\dim(x+B)\geq 1\ \text{and }x+B\subset \overline X \Bigr\},\\
S_{\overline X}  :=\overline{Z_{\overline X}}^{\mathrm{zar}}  \subset & \overline X \notag
\end{align}
where $\overline{Z_{\overline X}}^{\mathrm{zar}}$ means the Zariski closure of
$Z_{\overline X}$ in $\overline X$.

The following theorem is the main engine of our paper: 
\begin{thm}[{Yamanoi~\cite[Th.~1.3, Cor.~1.4]{Yamanoi2026}}]\label{thm:pkob}
Let $X\subset A$ be a closed algebraic subvariety.
Let $\overline A$ be a smooth equivariant compactification and let
$\overline X\subset \overline A$ be the compactification.
Then $X$ is tautly embedded modulo $S_{\overline X}$ in $\overline X$. 

Moreover, if $X$ is of log-general type, then $S_{\overline X}$ is a proper Zariski closed subset of $\overline X$. Consequently, $X$ is pseudo-Kobayashi hyperbolic
\end{thm}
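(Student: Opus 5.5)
This statement is quoted from Yamanoi~\cite{Yamanoi2026}, and the paper uses it as a black box; what follows is only how I would organize a proof. It has two independent parts. The first is an analytic compactness statement: taut embedding modulo \(S_{\overline X}\). The second is an algebraic finiteness statement: \(S_{\overline X}\) is proper when \(X\) is of log-general type. The pseudo-Kobayashi hyperbolicity then follows formally. Taut embedding modulo \(S_{\overline X}\) gives hyperbolic embedding modulo \(S_{\overline X}\) by the Kiernan--Kobayashi theorem (\cite[Th.~5.1.13]{Kobayashi1998}). This in turn gives \(d_X(x,y)>0\) whenever \(x,y\) are distinct and not both in \(S_{\overline X}\cap X\).

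\textbf{Taut embedding.} Let \(f_n\in\operatorname{Hol}(\bD,X)\) and assume no subsequence is compactly divergent from \(S_{\overline X}\). After passing to a subsequence, there are compact sets \(K\subset\bD\) and \(L\subset\overline X\setminus S_{\overline X}\) with \(f_n(K)\cap L\neq\varnothing\) for all \(n\). Since \(\overline X\) is compact, convergence of a subsequence reduces to equicontinuity. Suppose equicontinuity fails at some point of \(\bD\). Zalcman--Brody rescaling then yields affine reparametrizations \(z\mapsto a_n+r_nz\) with \(r_n\to0\). The rescaled maps converge to a nonconstant entire curve \(g:\bC\to\overline X\) that is a limit of discs in \(X\).

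The key input, and the step I expect to be the main obstacle, is a \emph{uniform} Bloch--Ochiai--Noguchi type statement for such limits. It should say that the Zariski closure of \(g(\bC)\) is a translate \(x+B\subset\overline X\) of the closure of a semi-abelian subvariety \(B\), or of a boundary orbit of the equivariant compactification. This would force \(g(\bC)\subset Z_{\overline X}\subset S_{\overline X}\).

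To obtain it I would proceed in three steps. First, lift through the exponential map and use the equivariant structure of \(\overline A\): its boundary is a finite union of \(A\)-orbits, each of which is a semi-abelian quotient of \(A\). Second, apply a second main theorem for discs in \(A\) with error terms uniform in \(n\), truncated at level one, following Yamanoi's Nevanlinna theory. Third, run the Ochiai--Kawamata argument on the jet images.

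Having shown that every degeneration occurs inside \(S_{\overline X}\), I would run a normal-family argument in the style of Kiernan. Around points mapped near \(L\), I would cover \(\overline X\setminus S_{\overline X}\) by hyperbolic-type neighbourhoods. Hurwitz's theorem then gives equicontinuity on a neighbourhood of \(K\), and a diagonal argument propagates this to all of \(\bD\). The outcome is that the only remaining alternative is a subsequence converging in \(\overline X\).

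\textbf{Properness of \(S_{\overline X}\).} For the ``moreover'' part I would use Kawamata's structure theorem for subvarieties of semi-abelian varieties. Suppose \(Z_{\overline X}\) were Zariski dense. Translates \(x+B\) come in countably many families, indexed by the countably many semi-abelian subvarieties \(B\). So a single \(B\) of positive dimension would have translates \(x+B\subset\overline X\) through a dense set of points. An incidence-variety argument then gives that \(X\) is stable under \(B\). Hence \(X\) is a \(B\)-bundle over \(X/B\), and its log Kodaira dimension is \(\bar\kappa(X)\le\dim X-\dim B<\dim X\), contradicting log-general type. Boundary points need separate treatment: translates \(x+B\) with \(x\in\overline X\setminus X\) lie in lower-dimensional orbit closures. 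For these I would argue by induction on the orbit stratification, using that \(\overline X\cap\partial\overline A\) is a proper closed subset.
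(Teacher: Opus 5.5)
The paper gives no proof of this statement; it imports it from Yamanoi. Judged on its own, your plan for the taut embedding has a genuine gap, and it is not where you locate it. The step you call the main obstacle is routine. A Zalcman--Brody limit $g:\bC\to\overline X$ of discs in $X$ lies in a single $A$-orbit of $\overline A$, by Hurwitz's theorem (Theorem~\ref{Hurwitz}) applied to each boundary component. That orbit is a semi-abelian quotient of $A$, and Noguchi's logarithmic Bloch--Ochiai theorem, applied to the honest entire curve $g$, gives $g(\bC)\subset Z_{\overline X}$. No uniformity in $n$ is involved.

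The real difficulty is the step you pass over: knowing that every rescaling limit lies in $S_{\overline X}$ does \emph{not} give taut embedding modulo $S_{\overline X}$. A sequence $f_n$ may converge locally uniformly off a discrete set to a disc $f$ with $f(\bD)\not\subset S_{\overline X}$. Meanwhile, near an exceptional point $z_0$, thin necks can carry part of the disc into $S_{\overline X}$, where the bubble lives. Such a sequence is neither convergent nor compactly divergent from $S_{\overline X}$. The rescaling only detects the bubble, because the Zalcman centres $a_n\to z_0$ may have $f_n(a_n)$ far from $f_n(z_0)$.

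Your ``hyperbolic-type neighbourhoods'' of points of $\overline X\setminus S_{\overline X}$ are exactly the local lower bound for the Kobayashi--Royden metric that has to be proved; the paper deduces that bound \emph{from} this theorem in Corollary~\ref{cor_yamanoi-metric}. So the Kiernan-style step is circular. This is the familiar gap between pseudo-Brody and pseudo-Kobayashi hyperbolicity. For subvarieties of abelian varieties the former follows from Bloch--Ochiai--Kawamata, while the latter was only obtained in \cite{Yamanoi2019}. Bridging it, with a Bloch--Ochiai type result for limiting objects that keep track of the whole disc rather than a single bubble, is the actual content of \cite{Yamanoi2026}.

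The properness argument also has a gap, though a repairable one. From ``countably many $B$'' you cannot conclude that a single $B$ has translates through a Zariski dense set: a countable union of proper closed subsets $X_B=\{x\in X: x+B\subset X\}$ can be Zariski dense. You first need that $\operatorname{Sp}_A(X)$ is Zariski closed in $X$, which is the real content of Kawamata's theorem. One proof realizes $\operatorname{Sp}_A(X)$ as the image of the Zariski closed incidence set of pairs $(x,[v])\in X\times\bP(\operatorname{Lie}A)$ with $x+\exp(\bC v)\subset X$, using that the Zariski closure of a one-parameter subgroup is a semi-abelian subvariety. Granting closedness, density forces $\operatorname{Sp}_A(X)=X$, Baire's theorem yields one $B$ with $X_B=X$, and your stabilizer argument via Theorem~\ref{thm:kawamata-abramovich} then applies.

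The boundary requires no induction on strata. Since $Z_{\overline X}\setminus X\subset\overline X\setminus X$, one has $S_{\overline X}\subset\operatorname{Sp}_A(X)\cup(\overline X\setminus X)$, a proper closed subset once $\operatorname{Sp}_A(X)$ is. Your formal deduction of pseudo-Kobayashi hyperbolicity via Kiernan--Kobayashi is fine.
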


Now we explain some structure results of semi-abelian varieties which will be important when we apply Yamanoi's theorem. If $X\subset A$ is an irreducible closed subvariety, define its \emph{stablizer} by
\[
  \operatorname{Stab}_A(X):=\left\{a\in A \mid{} a+X=X\right\}.
\]
Then the connected component of identity $\operatorname{Stab}_A^0(X)$ is a semi-abelian subvariety. Denote $B:=\operatorname{Stab}_A^0(X)$ and let $q:A\to A/B$ be the quotient map. Let $Y=q(X)$. 
Recall the definition of the logarithmic Kodaira dimension: Take a resolution $X'\to X$ and a smooth projective compactification $\overline X'$ with reduced simple normal crossings boundary $E=\overline X'\setminus X'$.  Set $\overline\kappa(X):=\kappa(\overline X',K_{\overline X'}+E)$,
where the right-hand side is the Iitaka dimension (see
\cite[Def.~2.1.3]{Lazarsfeld2004}). Note that $\overline\kappa(X)$ is independent of these choices.  The variety $X$ is of \emph{log-general type} if $\overline\kappa(X)=\dim X$. 

The following structure theorem is due to Ueno in the abelian case and Noguchi in the semi-abelian case. We will use Abramovich's formulation~\cite[Introduction, Th.~3]{Abramovich1994}.

\begin{thm}[Kawamata--Abramovich, \cite{Kawamata1980,Abramovich1994}]\label{thm:kawamata-abramovich}
The fibers of $q\vert_X:X\to Y$ are translates of $B$, the stabilizer of $Y$ in $A/B$ is finite, and
\[
\overline\kappa(X)=\dim Y=\dim X-\dim B.
\]
In particular, $Y$ is of log-general type, and $X$ is of log-general type if and only if $B=0$.
\end{thm}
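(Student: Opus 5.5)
Theorem~\ref{thm:kawamata-abramovich} is cited from Kawamata and Abramovich, so the plan is to reduce it to the classical log-Kodaira-dimension and Ueno-fibration results for subvarieties of semi-abelian varieties rather than reprove them. Throughout I take $X$ irreducible, $B=\operatorname{Stab}^0_A(X)$, $q:A\to A/B$, and $Y=q(X)$.

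\textbf{Step 1: the fibers of $q|_X$ are translates of $B$.} Since $B+X=X$, the set $X$ is a union of $B$-cosets. Hence $X=q^{-1}(Y)$, and every fiber of $q|_X$ is a full coset $x+B$. In particular $Y$ is closed, because $q$ is the quotient by a subgroup, and $\dim Y=\dim X-\dim B$.

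\textbf{Step 2: the stabilizer of $Y$ in $A/B$ is finite.} Suppose $\operatorname{Stab}_{A/B}(Y)$ is not finite, and let $C'$ be its identity component. Then $C'$ is a positive-dimensional semi-abelian subvariety of $A/B$. Let $C:=q^{-1}(C')^0$. Any $c\in C$ satisfies $q(c+X)=q(c)+Y=Y$. Combined with $X=q^{-1}(Y)$, this gives $c+X=X$, so $C\subset\operatorname{Stab}_A(X)$. But $C$ is connected and strictly contains $B$, which contradicts the definition of $B$ as the identity component of the stabilizer.

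\textbf{Step 3: the Kodaira dimension identity.} This is the substantive step and the main obstacle. I would invoke Kawamata's theorem: a closed subvariety of a semi-abelian variety with finite stabilizer is of log-general type. In Abramovich's formulation this is the statement $\overline\kappa(Y)=\dim Y$ for $Y\subset A/B$ with finite stabilizer, and the proof runs through the logarithmic Gauss map, which is generically finite when the stabilizer is finite.

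For $X$ itself, I would use that $q|_X:X\to Y$ is a principal $B$-bundle. Indeed, $A\to A/B$ is a $B$-torsor, hence étale-locally trivial, and $X=q^{-1}(Y)$. Since $\overline\kappa(B)=0$, the easy addition and subadditivity for fibrations whose fibers are semi-abelian (Kawamata's addition formula) give $\overline\kappa(X)=\overline\kappa(Y)+\overline\kappa(B)=\dim Y$.

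\textbf{Consequences.} $Y$ is of log-general type by the identity $\overline\kappa(Y)=\dim Y$. Moreover, $\overline\kappa(X)=\dim X$ holds if and only if $\dim B=0$, that is, if and only if $B=0$, since $B$ is connected.
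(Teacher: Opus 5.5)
\textbf{Verdict.} Your proposal is correct, but it unpacks the statement differently from the paper, and one citation in Step 3 needs repair.

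\textbf{Comparison with the paper.} The paper gives no argument for this statement. It quotes Abramovich's formulation of Kawamata's theorem, which already contains every assertion, including $\overline\kappa(X)=\dim X-\dim B$. Your Steps 1 and 2 are elementary and complete. For $c\in C$ one has $c+X=c+q^{-1}(Y)=q^{-1}(q(c)+Y)=X$, and $q^{-1}(C')^0$ contains $B$ and has dimension $\dim C'+\dim B$. In Step 3 you use only the finite-stabilizer case of Kawamata's theorem, applied to $Y\subset A/B$, and transfer it to $X$ through the $B$-torsor $q|_X$. This isolates the single deep input, but it requires an additivity statement.

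\textbf{The imprecise step.} Easy addition only gives $\overline\kappa(X)\le\overline\kappa(B)+\dim Y=\dim Y$. The result usually called Kawamata's addition formula concerns morphisms of relative dimension one, so it does not cover $\dim B>1$. The reverse inequality $\overline\kappa(X)\ge\overline\kappa(Y)$ is nonetheless easy here:
\begin{enumerate}
\item Put $b=\dim B$ and choose invariant $1$-forms $\omega_1,\dots,\omega_b$ on $A$ whose restrictions form a basis of the invariant forms of $B$. They extend with logarithmic poles to $\overline A$.
\item Let $Y'\to Y$ be a resolution. Then $X':=X\times_Y Y'$ is a resolution of $X$, being a torsor over a smooth base.
\item Take a log-smooth compactification of $X'$ on which the maps to $\overline A$ and to a log-smooth compactification of $Y'$ are morphisms. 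The boundaries pull back into the boundary of $X'$, since $X'$ maps into $A$ and into $Y'$.
\item Send a log $m$-canonical form $s$ of $Y$ to $q^*s$, wedged in each of the $m$ tensor factors with $\omega_1\wedge\cdots\wedge\omega_b$. This is a log $m$-canonical form on $X$.
\item The map is injective, because $q^*s$ kills the fiber directions, on which $\omega_1\wedge\cdots\wedge\omega_b$ is nonzero.
\end{enumerate}
Alternatively, you could quote the logarithmic Iitaka-type subadditivity over a base of log general type, or simply quote the full theorem as the paper does.

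\textbf{What each route buys.} With this repair your argument proves the statement. The paper's single citation is shorter. Your route makes explicit that only the implication ``finite stabilizer $\Rightarrow$ log general type'' is genuinely deep.
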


We define the special locus of $X$ with respect to $A$ as
\begin{equation}\label{eq:definition-special-locus}
\operatorname{Sp}_A(X)
  :=\left\{x\in X \mid{} \exists B'\subset A\text{ semi-abelian},\
      \dim B'>0,\ x+B'\subset X\right\}.
\end{equation}
Theorem~\ref{thm:kawamata-abramovich} implies that $\operatorname{Sp}_A(X)$ is Zariski closed in $X$. Each irreducible component $Z$ of $\operatorname{Sp}_A(X)$ is stabilized by a positive-dimensional semi-abelian
subvariety $B_Z=\operatorname{Stab}_A^0(Z)$, and $Z/B_Z$ is of log-general type. By \eqref{eq:yamanoi-special-boundary-set}, $Z_{\overline X}$ consists of points on positive-dimensional semi-abelian subvarieties contained in $\overline X$, and $S_{\overline X}$ is its Zariski closure, the exceptional set in Theorem~\ref{thm:pkob}.  Notice that $Z_{\overline X}\cap X=\operatorname{Sp}_A(X)$ and $S_{\overline X}\cap X=\operatorname{Sp}_A(X)$.
The second equality follows because $X$ is open in $\overline X$ and $\operatorname{Sp}_A(X)$ is closed in $X$.  The additional points of $S_{\overline X}$ lie in $\overline X\setminus X$.\\[.1cm]


%

The study of hyperbolicity of subvarieties of (semi-)abelian varieties goes back to the classical Bloch--Ochiai theorem~\cite{Bloch1926,Ochiai1977}: The Zariski closure of an entire curve in an abelian variety is a translate of an abelian subvariety.  Consequently, every proper closed subvariety of a \emph{simple} abelian variety admits no nonconstant entire curve, and thus is Brody hyperbolic.  By Brody's theorem it is also Kobayashi hyperbolic~\cite{Brody1978,Kobayashi1998}.

In the abelian case, the Yamanoi's exceptional set is nothing but the usual \emph{special set} of $X$, which is defined as
\[
  \mathrm{Sp}(X)=\left\{x\in X \mid{} \exists\text{ an abelian subvariety }
  B\subset A,\ \dim B>0,\ x+B\subset X\right\}.
\]
Previously, Yamanoi proved in \cite{Yamanoi2019} that every closed subvariety
$X\subset A$ is taut modulo $\mathrm{Sp}(X)$ and hence Kobayashi hyperbolic
modulo $\mathrm{Sp}(X)$.  Kawamata's theorem implies that if $X$ is of general
type, then $\mathrm{Sp}(X)$ is a proper Zariski closed subset. Therefore $X$ is
pseudo-Kobayashi hyperbolic.  Conversely, Ueno's structure theorem gives
$\mathrm{Sp}(X)=X$ when $X$ is not of general type \cite{Yamanoi2019}.
Thus, for closed subvarieties of abelian varieties, being of general type is equivalent to
pseudo-Kobayashi hyperbolicity.

The above picture was generalized to the semi-abelian case by Yamanoi in \cite{Yamanoi2026}. In fact, from Theorem~\ref{thm:pkob} one can extract a local lower bound of the Kobayashi–Royden pseudometric, which will be a key ingredient in the proof of our main theorem.

\begin{cor}\label{cor_yamanoi-metric}
Let $Z$ be a positive-dimensional closed subvariety of a semi-abelian variety, and assume that $Z$ is of logarithmic general type.  Then there is a proper Zariski closed subset $S_Z\subsetneq Z$ such that the following holds: For every compact set $K\subset Z_{\mathrm{reg}}\setminus S_Z$ and every Hermitian norm $\|\cdot\|$ on a neighborhood of $K$, there exists a constant $c_K>0$ such that
\begin{equation}\label{F_Z-ineq}
 F_Z(z;v)\ge c_K\|v\|   
\end{equation}
for all $z\in K$ and $v\in T_zZ$.
\end{cor}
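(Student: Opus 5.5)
We derive the corollary from the taut-embedding half of Theorem~\ref{thm:pkob}, applied with $X=Z$. Fix the ambient semi-abelian variety $A\supset Z$ and a smooth projective equivariant compactification $\overline A$, and let $\overline Z$ be the closure of $Z$ in $\overline A$. Since $Z$ is of log-general type, Theorem~\ref{thm:pkob} gives that $Z$ is tautly embedded modulo $S_{\overline Z}$ in $\overline Z$, and that $S_{\overline Z}$ is a proper Zariski closed subset of $\overline Z$. We put $S_Z:=S_{\overline Z}\cap Z=\operatorname{Sp}_A(Z)$, as observed after \eqref{eq:definition-special-locus}. This set is a proper Zariski closed subset of $Z$: it is closed, and it is proper because $\overline Z$ is irreducible and $Z$ is dense open in $\overline Z$, so $S_{\overline Z}\supset Z$ would force $S_{\overline Z}=\overline Z$.

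We argue by contradiction. Suppose $K\subset Z_{\mathrm{reg}}\setminus S_Z$ is compact and no $c_K>0$ works. Then there are points $z_n\in K$ and unit vectors $v_n\in T_{z_n}Z$ with $F_Z(v_n)\to 0$. By \eqref{eq:kobayashi-royden} we obtain holomorphic maps $f_n:\bD\to Z$ with $f_n(0)=z_n$ and $f_n'(0)=\lambda_n v_n$, where $|\lambda_n|\to\infty$. Two remarks are needed here. First, $Z$ need not be relatively compact in an ambient manifold in the strict sense of Definition~\ref{tautness}; we only use the conclusion of Yamanoi's theorem for sequences in $\operatorname{Hol}(\bD,Z)$ with target $\overline Z$. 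Second, $F_Z$ is defined on the complex space $Z$, and near $K\subset Z_{\mathrm{reg}}$ the discs $f_n$ are honest maps to $Z$ whose derivatives at $0$ lie in the smooth part.

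We now apply taut embeddedness modulo $S_{\overline Z}$ to $(f_n)$ and rule out both alternatives.
\begin{enumerate}[label=\textup{(\roman*)}]
\item \emph{Compact divergence.} The set $K$ is compact in $\overline Z\setminus S_{\overline Z}$, because $K\cap S_{\overline Z}=K\cap S_Z=\varnothing$. Taking the compact set $\{0\}\subset\bD$, compact divergence would give $f_n(0)\notin K$ for $n$ large, which contradicts $f_n(0)=z_n\in K$.
\item \emph{Convergence.} Otherwise a subsequence converges locally uniformly to a holomorphic $f:\bD\to\overline Z$. Passing to a further subsequence, $z_n\to z\in K$ and $f(0)=z$, which is a smooth point of $\overline Z$ (it lies in $Z_{\mathrm{reg}}$, and $Z$ is open in $\overline Z$). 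Choose a coordinate chart of $\overline Z$ around $z$, realized as a neighborhood of $z$ in $Z_{\mathrm{reg}}$. Uniform convergence implies that for some $r>0$ the maps $f_n$ send the disc $\{|\zeta|<r\}$ into this chart for all $n$ large. The Cauchy estimates then bound $\|f_n'(0)\|$ uniformly, which contradicts $\|f_n'(0)\|=|\lambda_n|\to\infty$ (all Hermitian norms near $K$ are comparable).
\end{enumerate}

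Both alternatives are excluded, so $c_K$ exists. The main point needing care is step (ii): one must ensure the limit point is a smooth point of $\overline Z$, so that Cauchy estimates in a chart apply. This holds because $K\subset Z_{\mathrm{reg}}$ and $Z$ is open in $\overline Z$. If the limit instead passed through singular or boundary points, that would not matter, since only a small disc around $0$ is used.
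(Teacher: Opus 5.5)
Your proof is correct and follows essentially the same route as the paper: you set $S_Z=S_{\overline Z}\cap Z$, use the taut embedding of $Z$ modulo $S_{\overline Z}$ in $\overline Z$ from Theorem~\ref{thm:pkob}, rule out compact divergence because every disc is centered in $K$, and in the convergent case bound the derivatives at $0$ in a chart around the smooth limit point (the paper uses convergence of derivatives where you use Cauchy estimates). Your extra check that $S_Z$ is proper is a useful detail the paper leaves implicit, while your caveat about relative compactness is unnecessary, since $Z=\overline Z\cap A$ is open in the compact space $\overline Z$ and hence relatively compact in it.
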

\begin{proof}
Let $S_{\overline Z}$ be the same as in Theorem~\ref{thm:pkob} and let $S_Z := S_{\overline Z} \cap Z$. From Theorem~\ref{thm:pkob} we know that $Z$ is tautly embedded modulo $S_{\overline Z}$ in $\overline Z$. Fix a compact subset \(K\subset Z_{\mathrm{reg}}\setminus S_Z\). 
 
We now argue by contradiction. Suppose that \eqref{F_Z-ineq} fails. Then, by the definition of Kobayashi–Royden infinitesimal pseudometric, derivatives at the origin are \emph{not} uniformly bounded among holomorphic disks $\varphi:\mathbb D\to Z$ with $\varphi(0)\in K$.

Choose a sequence \(\{\varphi_j\}\) with \(\|\varphi_j'(0)\|\to +\infty\).  This sequence cannot satisfy the second alternative in Definition~\ref{tautness}, as every \(\varphi_j\) meets the fixed compact set \(K\) at the origin.  Hence, after passage to a subsequence, \(\{\varphi_j\}\) converges uniformly on compact subsets in $\overline{Z}$.  Passing again to a subsequence, $\{\varphi_j(0)\}$ converges to some point $z_\infty\in K$.
The limiting map therefore takes \(0\) to the interior smooth point \(z_\infty\).  Choose a neighborhood \(W\Subset Z_{\mathrm{reg}}\setminus S_Z\) of \(z_\infty\).  Uniform convergence implies that, for some \(0<r<1\) and all sufficiently large \(j\), one has \(\varphi_j(\mathbb D_r)\subset W\). Then $\|\varphi_j'(0)\|$ will converge to the derivative norm of the limiting map at the origin, which cannot be $+\infty$. A contradiction.
\end{proof}

\subsection{Unit maps and quasi-Albanese morphisms}\label{unit_map_sect}

Now we come back to the setting of Theorem~\ref{mainthm:rationally-connected} and \ref{mainthm:rationally-connected_Kob}: Let $X$ be a rationally connected normal complex projective variety and \(f\) is a polarized divisorial PCF endomorphism on it. We continue to use the notations introduced in Sect.~\ref{sec:PCF-rc-chow}.

Recall that we have a tower of finite etale morphisms 
\[
\cdots \xrightarrow{f} U_{n+1} \xrightarrow{f} U_n \xrightarrow{f} \cdots \to U_0=U,    
\]
where $U_n$ is the complement of $f^n$-preimage of $\Delta(f):= \Sing(X) \cup P^{(1)}_f$. Consider \(K_{\infty} \subset K:=\bC(X)\) the subfield generated over
\(\bC\) by the elements of \(\mathcal O(U_n(f))^*\) for all $n \ge 0$. 

Let $r_n$ be the number of prime components of the reduced part of $f^{-n}(\Delta(f))$.  In this subsection we assume that $\{r_n\}$ is unbounded and that $\operatorname{trdeg}_{\bC}K_\infty=N:=\dim X$.  Under these assumptions we construct the unit maps needed in Section~\ref{sec:main-proof-hyperbolicity}.



Now we choose $n$ such that
\[
\operatorname{trdeg}_{\bC} K_n=N,
 \qquad \rho:=\operatorname{rank} M_n>N,
\]
and choose a basis $u_1,\dots,u_\rho$ of $M_n$.  It defines the \emph{unit map}
\[
 \alpha:U_n\longrightarrow T:=(\bC^*)^\rho,
 \qquad \alpha=(u_1,\dots,u_\rho).
\]
Let $Y_1=\overline{\alpha(U_n)}^{\Zar}$.  Then $\dim Y_1=N$ and $\alpha$ is
generically finite.  

\begin{lem}\label{lem:unit-nondegenerate}
The variety $Y_1$ is nondegenerate, i.e. it is not contained in a translate of a proper subtorus of $T$.
\end{lem}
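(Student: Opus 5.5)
We argue by contradiction: a degenerate image would yield a nontrivial multiplicative relation among the basis units $u_1,\dots,u_\rho$ of $M_n$, which is impossible because they are independent modulo constants.

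Suppose $Y_1\subset t_0\cdot T'$ for some $t_0\in T$ and some proper subtorus $T'\subsetneq T=(\bC^*)^\rho$. A proper subtorus is the common kernel of a nonzero family of characters. Hence there is a nonzero character $\chi(z)=z_1^{a_1}\cdots z_\rho^{a_\rho}$, with $(a_1,\dots,a_\rho)\in\bZ^\rho\setminus\{0\}$, that is trivial on $T'$. This is the standard duality between subtori and saturated sublattices of the character lattice $\bZ^\rho$. Then $\chi$ is constant on $t_0T'$, equal to $\chi(t_0)\in\bC^*$.

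Since $\alpha(U_n)\subset Y_1\subset t_0T'$, we get
\[
\chi\circ\alpha=u_1^{a_1}\cdots u_\rho^{a_\rho}=\chi(t_0)\in\bC^*
\]
as a function on $U_n$. Here we use that $U_n$ is irreducible (an open subset of the irreducible $X$) and that $\alpha$ is a morphism, so the relation holds identically on $U_n$. Passing to $M_n=\mathcal O(U_n)^*/\bC^*$, written additively, this says $\sum a_i[u_i]=0$ with not all $a_i$ zero.

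This contradicts the choice of $u_1,\dots,u_\rho$ as a basis of the free abelian group $M_n$, which is free by Lemma~\ref{lem:unit-growth}. Hence $Y_1$ is not contained in any translate of a proper subtorus.

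The only step needing care is the claim that a proper subtorus is cut out by a nonzero character. One can prove it directly: the restriction map on character lattices $\bZ^\rho=X^*(T)\to X^*(T')$ is surjective, and it has nonzero kernel because $\dim T'<\rho$. Any nonzero element of that kernel serves as $\chi$. No hypothesis on $\operatorname{trdeg}$ or on $\rho>N$ is needed for this lemma.
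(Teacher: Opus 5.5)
Your proof is correct and follows the paper's argument: a nontrivial character trivial on $T'$ is constant on $Y_1$, and composing it with $\alpha$ gives a nontrivial integral multiplicative relation among the basis units modulo constants. The extra justification you give for the existence of such a character (the restriction map on character lattices is surjective and has nonzero kernel) supplies a detail that the paper leaves implicit.
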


\begin{proof}
If $Y_1\subset aT'$ for a proper subtorus $T'$, a nontrivial character of $T$
which is trivial on $T'$ is constant on $Y_1$.  Composing with $\alpha$ gives a
nontrivial integral multiplicative relation among the basis units modulo
constants. A contradiction.
\end{proof}

Let $A=\Stab(Y_1)^0$, let $q_T:T\to T/A$ be the quotient, and define
\[
 Z=q_T(Y_1),
 \qquad \beta=q_T\circ\alpha:U_n\to Z.
\]
The stabilizer of $Z$ is finite.  We claim that $\dim Z>0$. If $Z$ were a point,
$Y_1$ would be a translate of $A$, a proper subtorus because
$\rho>N=\dim Y_1$, contradicting Lemma~\ref{lem:unit-nondegenerate}.  By Theorem~\ref{thm:kawamata-abramovich}, $Z$ is of logarithmic general type.

The map $\beta$ may have positive-dimensional general fibers.  Put $E:=\beta^*\bC(Z)$, $E_j:=(f^j)^*E$, $F_m:=\bC(E_0,\dots,E_m)$, and $F_\infty:=\bigcup_{m\ge0}F_m$.  For the construction of a generically finite map into a torus, we assume in addition that $\operatorname{trdeg}_{\bC}F_\infty=N$.


Choose $0\le j_1<\cdots<j_s$ such that the fields $E_{j_1},\dots,E_{j_s}$
generate a field of transcendence degree $N$, and choose $M\ge n+j_s$.  Then
\begin{equation}\label{Psi_unit_map}
  \Psi=\bigl(\beta\circ f^{j_1},\dots,\beta\circ f^{j_s}\bigr):
 U_M\longrightarrow Z^s   
\end{equation}
is \emph{generically finite} onto its image.  Hence its differential is injective away
from a proper algebraic subset $R\subsetneq U_M$.  Define
\[
 A_0:=R\cup\bigcup_{\nu=1}^s
 (\beta\circ f^{j_\nu})^{-1}(S_Z\cup\Sing Z),
\]
where $S_Z$ is the proper algebraic subset appeared in Coroallary~\ref{cor_yamanoi-metric}.
Thus $A_0$ is proper.

\begin{prop}\label{prop:local-coercivity}
For every compact $K\Subset U_M\setminus A_0$ there is $c_K>0$ such that
\[
 F_{U_M}(x;v)\ge c_K\|v\|
\]
for all $x\in K$ and $v\in T_xU_M$.
\end{prop}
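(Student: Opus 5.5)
The estimate follows from the distance-decreasing property of the Kobayashi--Royden pseudometric, combined with the lower bound of Corollary~\ref{cor_yamanoi-metric} on each factor $Z$ and the injectivity of $d\Psi$ off $R$. For each $\nu=1,\dots,s$, the map $\psi_\nu:=\beta\circ f^{j_\nu}:U_M\to Z$ is holomorphic; it is well defined since $M\ge n+j_s$ gives $f^{j_\nu}(U_M)\subset U_{M-j_\nu}\subset U_n$ by Lemma~\ref{lem:etale-tower}. Hence
\[
F_{U_M}(x;v)\;\ge\; F_Z\bigl(\psi_\nu(x);d_x\psi_\nu(v)\bigr)\qquad\text{for every }\nu .
\]

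Fix a compact $K\Subset U_M\setminus A_0$. By the definition of $A_0$, each image $K_\nu:=\psi_\nu(K)$ is a compact subset of $Z_{\mathrm{reg}}\setminus S_Z$. Fix Hermitian norms on a neighborhood of $K_\nu$ in $Z_{\mathrm{reg}}$, for example induced from the ambient torus $T/A$. Corollary~\ref{cor_yamanoi-metric} then gives constants $c_\nu>0$ with
\[
F_Z(z;w)\ge c_\nu\|w\|\qquad\text{for } z\in K_\nu,\; w\in T_zZ .
\]
Taking the maximum over $\nu$,
\[
F_{U_M}(x;v)\ \ge\ \min_\nu c_\nu\cdot \max_\nu \|d_x\psi_\nu(v)\|\ \ge\ \frac{\min_\nu c_\nu}{\sqrt{s}}\,\|d_x\Psi(v)\|,
\]
where the product norm on $T(Z^s)$ is the $\ell^2$ norm.

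It remains to compare $\|d_x\Psi(v)\|$ with $\|v\|$. Since $K\cap R=\varnothing$, the differential $d_x\Psi$ is injective for every $x\in K$. The function $(x,v)\mapsto\|d_x\Psi(v)\|$ is continuous and positive on the compact unit-sphere bundle of $TU_M$ over $K$, so it is bounded below there by some $\delta_K>0$. By homogeneity, $\|d_x\Psi(v)\|\ge\delta_K\|v\|$ for all $x\in K$ and $v\in T_xU_M$. Setting $c_K:=\delta_K\min_\nu c_\nu/\sqrt{s}$ finishes the argument.

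The only delicate point is applying Corollary~\ref{cor_yamanoi-metric} correctly. It requires $Z$ to be a positive-dimensional closed subvariety of a semi-abelian variety of log-general type, which was established above via Theorem~\ref{thm:kawamata-abramovich} because $Z=q_T(Y_1)$ has finite stabilizer. It also requires the compact sets $K_\nu$ to avoid both $\Sing Z$ and $S_Z$, which is exactly why those preimages were placed in $A_0$. One should also note that $Z$ is closed in $T/A$: if $Y_1$ is taken as the closure of $\alpha(U_n)$ in $T$, this holds because $q_T$ restricted to an $A$-stable closed set has closed image. Once these are in place, the rest is a standard compactness argument.
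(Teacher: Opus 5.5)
Your proposal is correct and follows essentially the same route as the paper: the distance-decreasing property of the Kobayashi--Royden pseudometric, the lower bound of Corollary~\ref{cor_yamanoi-metric} on the compact images of $K$ in $Z_{\mathrm{reg}}\setminus S_Z$, and a compactness argument using injectivity of $d\Psi$ off $R$. The only difference is cosmetic: you apply distance-decreasing to each coordinate map $\beta\circ f^{j_\nu}$ separately instead of quoting the max-formula for the metric on the product $Z^s$, which needs only the easy inequality and is therefore slightly more self-contained.
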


\begin{proof}
Apply Corollary~\ref{cor_yamanoi-metric} to the compact image of $K$ under each coordinate
of $\Psi$.  The Kobayashi--Royden metric on a product is the maximum of the
factor metrics.  The distance-decreasing property under holomorphic maps gives
\[
 F_{U_M}(x;v)\ge F_{Z^s}(\Psi(x);d\Psi_xv).
\]
Since $d\Psi$ is injective on $K$, compactness gives $c'>0$ with
$\|d\Psi_xv\|\ge c'\|v\|$.  Combining the estimates proves the statement.
\end{proof}

\subsubsection{Quasi-Albanese morphisms}
\label{subsec:quasi-albanese}

The unit map $ \alpha:U_n\longrightarrow T:=(\bC^*)^\rho$ has an intrinsic interpretation.  Choose a
log-smooth compactification $U_n=\widehat X_n\setminus \widehat B_n$, where $\widehat X_n$ is smooth projective and $\widehat B_n$ is simple
normal crossing.  Since $\widehat X_n$ is rationally
connected, one has
$\operatorname{Alb}(\widehat X_n)=0$ and
$H^0(\widehat X_n,\Omega^1_{\widehat X_n})=0$.  Hence the quasi-Albanese
variety $\operatorname{Alb}_{\rm q}(U_n)$ is a torus of dimension
\[
 \overline q(U_n):=
 h^0\!\left(\widehat X_n,
 \Omega^1_{\widehat X_n}(\log\widehat B_n)\right);
\]
see \cite[Sections~2--3]{FujinoQuasiAlb} and
\cite{FujinoQuasiAlbCorrection}.

Fix $x_0\in U_n$ and normalize the quasi-Albanese morphism
$a_n:U_n\to \operatorname{Alb}_{\rm q}(U_n)$ by $a_n(x_0)=1$.  Its universal property, applied to the
normalized units $u/u(x_0):U_n\to \bC^*$, gives a canonical identification between the character lattice of $\operatorname{Alb}_{\rm q}(U_n)$ and $\mathcal{O}(U_n)^*/\bC^*=M_n$.

Thus, if $[u_1],\ldots,[u_\rho]$ is the basis of $M_n$ used above, the
corresponding characters identify $\operatorname{Alb}_{\rm q}(U_n)$ with $(\bC^*)^\rho$. Up to translation and such a torus isomorphism, \emph{the unit map $\alpha$ is the
quasi-Albanese morphism}.  In terms of differential forms, we have
\[
 a_n^*\!\left(\frac{dz_i}{z_i}\right)=d\log u_i,
 \qquad
 H^0\!\left(\widehat X_n,
 \Omega^1_{\widehat X_n}(\log\widehat B_n)\right)
 =\bigoplus_{i=1}^{\rho}\bC\,d\log u_i.
\]

\section{Proof of the main results}\label{sec:main-proof-hyperbolicity}

In this section, we give the proofs of Theorem~\ref{mainthm:rationally-connected} and \ref{mainthm:rationally-connected_Kob}.

\subsection{Backward-orbit density and Kobayashi hyperbolicity}
We start with some lemma:
\begin{lem}
\label{lem:finite-closure}
Let $h:V\to W$ be a finite dominant morphism of irreducible varieties, with
$W$ normal.  Then for every subset $S\subset W$,
\[
 h^{-1}(\overline S^{\Zar})=\overline{h^{-1}(S)}^{\Zar}.
\]
\end{lem}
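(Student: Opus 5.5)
The inclusion $\overline{h^{-1}(S)}^{\Zar}\subset h^{-1}(\overline S^{\Zar})$ is immediate. Indeed, $h^{-1}(\overline S^{\Zar})$ is Zariski closed by continuity and contains $h^{-1}(S)$. The content of the lemma is the reverse inclusion. Put $C:=\overline{h^{-1}(S)}^{\Zar}$. The plan is to show that every irreducible component of $h^{-1}(\overline S^{\Zar})$ is contained in $C$.

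The first step uses finiteness. Since $h$ is finite, it is closed, so $h(C)$ is Zariski closed in $W$. Moreover $h(C)\supset h(h^{-1}(S))$. Finite dominant morphisms are surjective, because the image is closed and dense. Hence $h(h^{-1}(S))=S$, and therefore $h(C)\supset\overline S^{\Zar}$. The reverse containment $h(C)\subset\overline S^{\Zar}$ is clear, so $h(C)=\overline S^{\Zar}$.

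The second step is the key one, and it uses normality of $W$ through the going-down property. Let $Z'$ be an irreducible component of $h^{-1}(\overline S^{\Zar})$. We want $Z'\subset C$. Its image $h(Z')$ is an irreducible closed subset of some irreducible component $Z$ of $\overline S^{\Zar}$. For a finite dominant morphism from an irreducible variety onto a normal variety, going-down holds, as in Matsumura's theorem for integral extensions of a normal domain with the upper ring a domain. Consequently every irreducible component of $h^{-1}(Z)$ dominates $Z$, and a component of $h^{-1}(\overline S^{\Zar})$ is a component of some $h^{-1}(Z)$. So we may take $Z'$ to be a component of $h^{-1}(Z)$ with $h(Z')=Z$.

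It remains to show $Z'\subset C$. Here lies the main obstacle: $S\cap Z$ is dense in $Z$, but the preimages of points of $S\cap Z$ might all fall into other components of $h^{-1}(Z)$. I would handle this pointwise, again by going-down, now at the level of points and local rings. Let $z'\in Z'$ be a point whose image $h(z')$ is a general point of $Z$. Then $\mathcal O_{W,h(z)}$ is normal and $\mathcal O_{V,z'}$ is integral over it, so the generizations of $h(z')$ lift to generizations of $z'$. In particular $h$ is an open map: for finite dominant maps from an irreducible variety onto a normal variety, $h$ is universally open (Chevalley's criterion in EGA IV 14.4.4). By openness, $h(\Omega)$ is open in $W$ for any Zariski open neighborhood $\Omega$ of $z'$. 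Since $z'$ maps into $Z\subset\overline S^{\Zar}$, the open set $h(\Omega)$ meets $S$. Hence $\Omega$ meets $h^{-1}(S)$, so every neighborhood of $z'$ meets $h^{-1}(S)$, and $z'\in C$. As this holds for every $z'\in h^{-1}(\overline S^{\Zar})$, we conclude $h^{-1}(\overline S^{\Zar})\subset C$.

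This openness argument actually makes the component analysis of the second step unnecessary. The whole proof therefore reduces to a single fact: a finite dominant morphism from an irreducible variety to a normal variety is open in the Zariski topology. I would cite that fact rather than reprove it.
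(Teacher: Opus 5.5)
Your proof is correct and, once the component analysis of your first two steps is dropped (which you already note is unnecessary), it is the paper's argument: $h$ is open by going-down over the normal base $W$, and for $v\in h^{-1}(\overline S^{\Zar})$ every open neighborhood $O$ of $v$ has open image $h(O)\ni h(v)$, which meets $S$, so $O$ meets $h^{-1}(S)$. The only difference is cosmetic: you cite Chevalley's criterion (EGA IV 14.4.4) for openness, while the paper derives it from Chevalley's constructibility theorem, stability of the image under generalization by going-down, and EGA IV$_1$, Cor.~1.10.4.
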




\begin{proof}
Since $W$ is normal, the going-down theorem implies that $h$ is open.
Indeed, the image of an open subset of $V$ is constructible by Chevalley's theorem and stable under generalization by going down. Hence it is open by \cite[IV$_1$, Cor.~1.10.4]{EGAIV}.

The inclusion \(\overline{h^{-1}(S)}^{\Zar}\subset h^{-1}(\overline S^{\Zar})\) is clear. Conversely, let $v\in h^{-1}(\overline S^{\Zar})$ and let $O$ be an open neighborhood of $v$. Since $h$ is open, $h(O)$ is an open neighborhood of $h(v)$ and therefore intersects $S$. Hence $O\cap h^{-1}(S)\ne\varnothing$, which proves the reverse inclusion.
\end{proof}


Recall that $X$ is a normal rationally connected projective variety of dimension $N$, and $f:X\to X$ is a $q$-polarized divisorially PCF endomorphism on $X$, with $f^*H\sim qH$ for an ample Cartier divisor $H$ and an integer $q\ge2$.  As in \eqref{eq:definition-adjoin-locus}, we write $\Delta_f=\Sing(X)\cup P_f^{(1)}$ and $U_r=X\setminus f^{-r}(\Delta_f)$. Thus $f^k:U_{r+k}\to U_r$ is finite \'etale.

\begin{lem}\label{lem:backward-density}
For every $p\in U_0$, the set
\[
 \mathcal O^-(p):=\bigcup_{k\ge0}f^{-k}(p)
\]
is Zariski dense in $X$. 
Moreover, for each $k_0 \ge 0$,
the truncation $\bigcup_{k\ge k_0}f^{-k}(p)$ is also Zariski dense.
\end{lem}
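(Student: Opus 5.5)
We argue by contradiction, using the polarization to control the Zariski closure of backward orbits. Let $p\in U_0$ and let $Z$ be the Zariski closure of $\bigcup_{k\ge k_0}f^{-k}(p)$. Since $f^{-1}(\bigcup_{k\ge k_0}f^{-k}(p))=\bigcup_{k\ge k_0+1}f^{-k}(p)$ is contained in the same set, Lemma~\ref{lem:finite-closure} (with $f$ finite and $X$ normal) gives $f^{-1}(Z)\subset Z$. Taking degrees, $f^{-1}(Z)\subset Z$ together with $f(f^{-1}(Z))=Z$ shows that $f^{-1}$ permutes a finite set of irreducible components. Replacing $f$ by an iterate $f^\ell$, we may assume every irreducible component $Z_i$ of $Z$ of maximal dimension satisfies $f^{-1}(Z_i)\supset Z_i'$ and $f(Z_i')=Z_i$ for some component $Z_i'$ of $Z$ of the same dimension. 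Since there are finitely many such components, after a further iterate we get a component $W$ with $f^{-1}(W)=W$ set-theoretically (a \emph{totally invariant} subvariety for the iterate). Passing to an iterate is harmless: the truncated orbit of $f^\ell$ starting at $k_0$ is contained in that of $f$, and $f^\ell$ is still polarized, PCF, and étale on the tower $U_{r+\ell}\to U_r$.

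Suppose $\dim W<N$. The key point is that $p$ (or a backward preimage of it, which still lies in $U_0$ since $f^{-1}(U_0)=U_1\subset U_0$) lies in such a totally invariant proper subvariety $W$. Indeed, every point of $\bigcup_{k\ge k_0}f^{-k}(p)$ lies in $Z$, so some preimage $p'\in f^{-k}(p)\cap U_k$ lies in a maximal-dimensional totally invariant component $W$ (after iterate). Now we use polarization. If $W\subsetneq X$ is totally invariant, then $f^*W$, as a cycle, equals $\deg(f|_W\text{-multiplicity})\cdot W$. Computing with $H$ gives
\[
(f^*H)^{\dim W}\cdot W = q^{\dim W}H^{\dim W}\cdot W,
\]
while the total degree of $f$ is $q^N$. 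Hence the local multiplicity of $f$ along $W$ is $q^{N-\dim W}\geq q\ge 2$, so $f$ is ramified along $W$, i.e. $W\subset \operatorname{Crit}$ in the appropriate sense. This would put $f(W)=W$ inside the branch locus. The branch locus lies in $\Delta(f)$: by Lemma~\ref{lem:etale-tower}, $f$ is étale over $U_0$, so the only points where $f$ can fail to be étale map into $\Delta(f)=\operatorname{Sing}X\cup P_f^{(1)}$. Hence $W\subset\Delta(f)$, contradicting $p'\in W\cap U_k\subset U_0$. Therefore $\dim W=N$, i.e. $Z=X$.

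The main obstacle is justifying the multiplicity claim rigorously: turning "$f^{-1}(W)=W$ with $f|_W$ of degree $q^{\dim W}$" into "$f$ is not étale at a general point of $W$". I would argue by noting that $f|_W:W\to W$ is $q$-polarized by $H|_W$, so $\deg(f|_W)=q^{\dim W}<q^N=\deg f$. On the other hand, if $f$ were étale at a point $w\in W\cap U_1$, then over a small neighborhood of $f(w)$ the map is an unramified covering of degree $q^N$. Since $f^{-1}(W)=W$, all $q^N$ local preimage sheets of a general point of $W$ near $f(w)$ lie in $W$, forcing $\deg(f|_W)=q^N$. This is a contradiction. I expect the most care to be needed at the step where $W$ is replaced by a totally invariant component, handling components of different dimensions.
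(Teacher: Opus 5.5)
Your proposal is correct and follows essentially the paper's argument: like the paper, you obtain a top-dimensional component of the orbit closure that is totally invariant under an iterate $F=f^a$ and not contained in $\Delta_f$, and your final paragraph derives the same contradiction, comparing the $q^{aN}$ points of an \'etale fiber of $F$ lying in that component with $\deg(F|_V)=q^{a\dim V}$. The paper instead first stabilizes the chain $Z\supset f^{-1}(Z)\supset\cdots$ to a closed set $W_\infty$ with $f^{-1}(W_\infty)=W_\infty$ before permuting components, whereas you work with $Z$ directly. That works, but you should spell out the two steps you left vague: the top-dimensional components of $f^{-1}(Z)\subset Z$ (which dominate, by going-down) are top-dimensional components of $Z$ mapping onto all of them, which forces a bijection; and the backward orbit meets $W$ because it is dense in every irreducible component of $Z$.
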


\begin{proof}
Let $W=\overline{\mathcal O^-(p)}^{\Zar}$. Since $X$ is normal, by
Lemma~\ref{lem:finite-closure} we obtain that $f^{-1}(W)= \overline{f^{-1}(\mathcal O^-(p))}^{\Zar} = \overline{\bigcup_{k\ge 1}f^{-k}(p)}^{\Zar} \subset W$. The descending chain
$W\supset f^{-1}(W)\supset\cdots$ stabilizes and we obtain a nonempty closed set $W_\infty$ satisfying
$f^{-1}(W_\infty)=W_\infty$. A tail of the backward orbit is dense in
$W_\infty$.

Suppose for contradiction that $W_\infty\ne X$. As a finite morphism preserves dimension, every maximal-dimensional irreducible component of $W_\infty$ is mapped by $f$ onto another one. Since $f\vert_{W_\infty}$ is surjective, the induced map on the finite set of maximal-dimensional components is surjective and hence
a permutation. Let $V$ be one such component. We will replace $f$ by an iterate $F=f^a$ such that $F(V)=V$ and no other maximal-dimensional component maps onto $V$. Let $B\subsetneq V$ be the union of the images of all irreducible components of $F^{-1}(V)$ that do not dominate $V$.  

Suppose for contradiction that $V\not\subset\Delta_f$. Let $y\in V\setminus(\Delta_f\cup B)$ be a general point. Then every point of $F^{-1}(y)$ lies on the unique component of $F^{-1}(V)$ dominating $V$, namely $V$ itself. Note that these points lie in the smooth locus and outside $P_f$ while the divisorial branch locus of $F$ lie in $P_f$. Applying the Zariski--Nagata purity theorem \cite[Th.~X.3.1]{SGA1}, we deduce that $F$ is unramified over $y$. Thus the fiber has $\deg F=q^{aN}$ distinct points, which are all contained in $V$. On the other hand, as $F^*H\sim q^a H$, we have, by the projection formula, that $\deg(F\vert_V)=q^{ar}$ where $r:=\dim V<N$. A contradiction. Hence every maximal-dimensional component of $W_\infty$ lies in $\Delta_f$.

Since $\Delta_f$ is forward invariant and $p\notin\Delta_f$, we know that the backward orbit of $p$ is disjoint from $\Delta_f$. But by above arguments, a tail must intersect some nonempty open subset of each maximal component of $W_\infty$ which is contained in $\Delta_f$. This contradiction proves $W=X$. The above proof works for any $k_0$-truncation of the backward orbit. 
\end{proof}

Next we use the Zariski density of backward orbits of points in $U$ to prove a "dense-or-nothing" principle for the degeneracy locus of Kobayashi--Royden metric on $U$.

We choose a smooth Hermitian metric on the regular locus of $X$. Denote by $\|\cdot\|$ the induced norm on the tangent bundle, and use its restriction to each $U_r$.  Note that local equivalence of Hermitian norms shows that the degeneracy locus below does not depend on this choice.

\begin{prop}\label{prop:dynamical-amplification}
Let $r\geq 0$. Consider the degeneracy locus
\[
 \Sigma_r:=\left\{x\in U_r \mid \begin{array}{l}
 \text{for every neighborhood }V\ni x,\\[-1mm]
 \inf\{F_{U_r}(y;v)\vert y\in V,\ \lVert v\rVert=1\}=0
 \end{array}\right\}.
\]
The set $\Sigma_r$ is either empty or Zariski dense in $U_r$.
\end{prop}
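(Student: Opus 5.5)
Suppose $\Sigma_r\neq\varnothing$ and pick $x\in\Sigma_r$. I would prove that $\Sigma_r$ is Zariski dense by showing that it contains a whole backward orbit tail of $x$, and then applying Lemma~\ref{lem:backward-density}.

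\textbf{Step 1: $\Sigma_r$ is backward invariant.} The aim is to show $f^{-k}(x)\cap U_r\subset\Sigma_r$ for every $k\ge 0$. By Lemma~\ref{lem:etale-tower}, the map $f^k:U_{r+k}\to U_r$ is a finite \'etale covering between connected complex manifolds. Kobayashi--Royden metrics lift along holomorphic coverings: every disc $\varphi:\bD\to U_r$ with $\varphi(0)=y$ lifts through any chosen preimage $\tilde y$ of $y$. This gives
\[
F_{U_{r+k}}(\tilde y;\tilde v)=F_{U_r}\bigl(y;d f^k(\tilde v)\bigr).
\]
Fix $\tilde x\in f^{-k}(x)$. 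Since $f^k$ is a local biholomorphism at $\tilde x$, it carries small neighborhoods $\tilde V\ni\tilde x$ biholomorphically onto neighborhoods $V\ni x$. On a relatively compact such neighborhood, $\|d f^k\|$ and its inverse are bounded. Degeneracy of $F_{U_r}$ near $x$ therefore gives $\tilde y_j\to\tilde x$ and unit vectors $\tilde v_j$ with $F_{U_{r+k}}(\tilde y_j;\tilde v_j)\to 0$. So $\tilde x\in\Sigma_{r+k}$.

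\textbf{Step 2: pass back to $U_r$.} Since $U_{r+k}\subset U_r$ is open, the inclusion is holomorphic, and the decreasing property gives $F_{U_r}\le F_{U_{r+k}}$ on $U_{r+k}$. Hence $\tilde x\in\Sigma_r$. Combining Steps 1 and 2 yields $\bigcup_{k\ge0}f^{-k}(x)\subset\Sigma_r$. All these preimages lie in $U_{r+k}\subset U_r$, so nothing falls outside $U_r$.

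\textbf{Step 3: density.} Lemma~\ref{lem:backward-density} is stated for $p\in U_0$, and $x\in U_r\subset U_0$, so it applies. It says the backward orbit $\mathcal O^-(x)$ is Zariski dense in $X$, hence also in the open set $U_r$. Therefore $\Sigma_r$ is Zariski dense.

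The main point needing care is the lifting identity in Step 1. One must check that lifting discs through a finite \'etale covering preserves the infimum in \eqref{eq:kobayashi-royden} exactly, using simple connectivity of $\bD$. One must also verify that the local degeneracy condition, an infimum over a neighborhood, is transported correctly by the local biholomorphism. Both are standard (\cite[Th.~3.2.8]{Kobayashi1998}).
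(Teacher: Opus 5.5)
Your proposal is correct and follows essentially the same route as the paper: backward invariance $f^{-k}(\Sigma_r)\subset\Sigma_r$ via covering invariance of the Kobayashi--Royden metric along the finite \'etale map $f^k:U_{r+k}\to U_r$ and monotonicity under $U_{r+k}\hookrightarrow U_r$, followed by Zariski density of backward orbits from Lemma~\ref{lem:backward-density}. Your explicit treatment of the local biholomorphism, the norm comparison, and the disc-lifting direction of the inequality fills in details that the paper leaves implicit.
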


\begin{proof}
We claim that $\Sigma_r$ is preserved under $f^{-k}$ for $k \geq 0$. Indeed, first, if $x\in\Sigma_r$ and $y\in f^{-k}(x)$, then
$y\in U_{r+k}\subset U_r$. Second, consider the finite étale covering $f^k:U_{r+k}\to U_r$. By using the invariance of the Kobayashi--Royden metric under unramified coverings and monotonicity under the inclusion $U_{r+k}\hookrightarrow U_r$, we get that $y \in \Sigma_r$. Thus $f^{-k}(\Sigma_r)\subset\Sigma_r$ for $k \geq 0$.
 
We remark that if there exists a proper algebraic subset $A\subsetneq U_r$ such that $\Sigma_r \subset A$, then $\Sigma_r=\emptyset$ and $U_r$ (hence $U_0$) is Kobayashi hyperbolic. The reason is as follows. The full backward orbit of any $x\in\Sigma$ cannot be contained in some proper algebraic set $A$, because otherwise it would contradict Lemma~\ref{lem:backward-density}. Hence
$\Sigma$ is either empty or Zariski-dense. 
\end{proof}

Now we prove Theorem~\ref{mainthm:rationally-connected_Kob}.

\begin{proof}[Proof of Theorem~\ref{mainthm:rationally-connected_Kob}]

Recall that \(r_n\) is the number of prime components of the reduced part of $f^{-n}(\Delta(f))$. If $\{r_n\}$ is bounded, then Corollary~\ref{cor:divisorial-tower} produces an invariant divisor $D_{\infty}$, which gives us the Case~\textup{(1)}.

Suppose $\{r_n\}$ is unbounded.  If $\operatorname{trdeg}_{\bC}K_\infty<N:=\dim X$, Proposition~\ref{prop:pcf-rational-fibration} gives Case~\textup{(2)}.  Otherwise we construct $Z$, $E_j$ and $F_m$ as in Section~\ref{unit_map_sect}.

If $t:=\operatorname{trdeg}_{\bC}F_\infty<N$, we choose $m_0$ with $\operatorname{trdeg}_{\bC}F_{m_0}=t$.  Since $\dim Z>0$, one has $1\le t<N$.  Let $L$ be the relative algebraic closure of $F_{m_0}$ in $\bC(X)$.  It is a finite extension of $F_{m_0}$, and $f^*F_{m_0}\subset F_{m_0+1}$ with $F_{m_0+1}/F_{m_0}$ algebraic implies $f^*L\subset L$, exactly as in Lemma~\ref{lem:invariant-field}. Now we can use Lemmas~\ref{lem:invariant-subfield-chow} and \ref{lem:polarization-descent} and Proposition~\ref{prop:pcf-descent} to give the fibration in Case~\textup{(2)}. Its base is rationally connected as a dominant rational image of $X$.

It remains to consider $\operatorname{trdeg}_{\bC}F_\infty=N$.  The construction \eqref{Psi_unit_map} and Proposition~\ref{prop:local-coercivity} put $\Sigma_M$ inside the proper algebraic subset $A_0\subset U_M$.  Proposition~\ref{prop:dynamical-amplification} then gives $\Sigma_M=\varnothing$ and the Kobayashi hyperbolicity of $U_0$.

\end{proof}

\subsection{Bubbling and tautness}
We now assume that $X$ is smooth, rationally connected, and $f$ is a
$q$-polarized PCF endomorphism. 

In this section, we prove the following criterion for tautness, which will finish the proof of Theorem~\ref{mainthm:rationally-connected}.

\begin{thm}[Criterion for tautness]\label{tautness-criterion}
Suppose that for some $M\ge 0$ there is a morphism
\[
 \Theta:U_M\longrightarrow W
\]
which is generically finite onto a closed subvariety $W$ of an algebraic torus
and whose stabilizer is finite.  Then $U=U_0$ is taut.
\end{thm}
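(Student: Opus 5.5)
We argue by contradiction, following the bubbling strategy sketched in \S\ref{subsec:plan}. Suppose $U=U_0$ is not taut. Then there is a sequence $\varphi_j:\bD\to U_0$ having neither a convergent nor a compactly divergent subsequence. After passing to a subsequence and reparametrizing by automorphisms of $\bD$, we may assume that $\varphi_j(0)$ stays in a fixed compact subset of $U_0$. Since $X$ is compact, Hurwitz's theorem together with the Kobayashi hyperbolicity of $U_0$ (proved above, Proposition~\ref{prop:dynamical-amplification}) shows that non-tautness is equivalent to the failure of normality of $\{\varphi_j\}$ as maps into $X$. Indeed, if the family were normal in $X$, Hurwitz's theorem would force the limits either to lie in $U_0$ or to lie entirely inside the boundary divisor, and the latter is excluded because the centers stay in a compact set.

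\textbf{Step 1: lifting through the tower.} The first step lifts the discs through the finite étale tower. Because $\bD$ is simply connected and $f^k:U_k\to U_0$ is finite étale, each $\varphi_j$ lifts to $\psi_j:\bD\to U_{M+k}$ with $f^{M+k}\circ\psi_j=\varphi_j$, and we may choose which preimage of $\varphi_j(0)$ to take. Let $A_\Theta\subsetneq U_M$ be the union of three sets: the locus where $d\Theta$ fails to be injective, the preimage of $\Sing W$, and the preimage of $S_W$. Here $S_W$ is the Yamanoi exceptional set, which is proper because $W$ has finite stabilizer and is therefore of log-general type by Theorem~\ref{thm:kawamata-abramovich}. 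Since backward orbits are Zariski dense (Lemma~\ref{lem:backward-density}), and the limit points of the centers range over a compact set, we can choose the lifts so that the centers $\psi_j(0)$ stay in a compact set $K\Subset U_M\setminus A_\Theta$.

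\textbf{Step 2: uniform area bounds.} Apply Theorem~\ref{thm:pkob} to $\Theta\circ\psi_j:\bD\to W\subset\overline W$. Since the centers avoid $S_W$, compact divergence modulo $S_{\overline W}$ is impossible. Hence a subsequence converges locally uniformly in $\overline W$, and on some $\bD_r$ the images stay in a neighborhood of a point of $W\setminus S_W$ over which $\Theta$ is finite. Comparing algebraically, via the pullback of an ample form on $\overline W$ dominating $\Theta^*$ of the chosen form on the finite locus, we obtain a uniform bound $\operatorname{Area}_H(\psi_j(\bD_{r}))\le C$. This area is measured with respect to the Kähler form $\omega_H$ on $X$, and the constant $C$ is independent of the depth $k$ of the lift. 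Pushing forward by $f^{M+k}$ multiplies areas by at most $q^{M+k}$, because $(f^{M+k})^*\omega_H$ is cohomologous to $q^{M+k}\omega_H$, up to a bounded potential correction. This gives bounded area for $\varphi_j|_{\bD_r}$ in terms of $k$. The non-normality of $\{\varphi_j\}$ in $X$ persists on $\bD_r$ after rescaling, so Brody/Zalcman reparametrization produces a nonconstant entire limit $g:\bC\to X$ with finite area. Such a curve extends to a rational curve $\overline g:\bP^1\to X$.

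\textbf{Step 3: the contradiction from polarization.} The rational curve $\overline g$ is a bubble of the images $f^{M+k}\circ\psi_j$, so it lies in the image under $f^{M+k}$ of a bubble of the $\psi_j$ themselves. Its $H$-degree therefore satisfies
\[
q^{M+k}\le \deg_H(\text{bubble of }\psi_j)\cdot q^{M+k}\ \text{ and } \ \deg_H \overline g\le C',
\]
where $C'$ is the fixed area bound coming from the depth-zero comparison. On the other hand, $f^{M+k}$ maps a curve of degree $e\geq 1$ to a curve of degree at least $q^{M+k}e/\deg$, using polarization. Taking $k$ large forces $\deg_H\overline g>C'$, which is a contradiction.

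\textbf{Main obstacle.} The main obstacle is Step~3, namely making the area bookkeeping consistent. We need to show that a bubble produced at depth $k$ is the image of a nonconstant bubble of the lifted discs, whose area is bounded below by a positive constant, so that it genuinely gains a factor of $q^{M+k}$. We expect to handle this by bubbling the lifts $\psi_j$ first, which is possible since they have bounded area but may still fail to be normal, and then pushing the resulting rational curve forward by $f^{M+k}$.
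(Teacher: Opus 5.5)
There is a genuine gap, in two places.

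\textbf{The area bound.} In Step~2 you localize: on a small disc $\bD_r$ where $\Theta\circ\psi_j$ stays near a good point of $W$, you dominate $\omega_H$ by $\Theta^*$ of an ample form. That controls area only near the center, where the family is already normal. The centers converge to a point of $U_{M+k}$, which is Kobayashi hyperbolic, so the $\psi_j$ are equicontinuous there.

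Bubbling needs an area bound on a disc $\bD_R$ that contains a point of non-normality. Non-normality occurs exactly where $\psi_j$ runs toward $\Delta_M$, or toward the locus where $\Theta$ is not finite, and there no pointwise comparison is available. So your claim that non-normality persists on your small $\bD_r$ is unjustified.

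The missing ingredient is the global bound of Lemma~\ref{uniform_energy_bound}. Kodaira's lemma gives $a\widetilde\Theta^*L\sim\mu^*H+E$ with $E$ effective on a resolution. Jensen's formula then gives $T_{\psi_j,H}(R)\le aT_{\Theta\circ\psi_j,L}(R)+C_R$ for every $R<1$, because the counting function of $E$ enters with a favorable sign. The only price is that the centers must avoid $\Sigma_\Theta$, the trace of $E$ on $U_M$, which has to be added to your exceptional set $A_\Theta$.

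\textbf{The bookkeeping in Step~3.} The contradiction needs an upper area bound at a fixed level, obtained \emph{before} you choose the depth at which you bubble. You have no depth-zero comparison: $\Theta$ is defined only on $U_M$, and the limit center $p$ of $\varphi_j$ is dictated by the sequence and may lie in the exceptional set. The only bound you get for $\varphi_j$ is of size $q^{M+k}C$, which is useless. The assertion that $C$ is independent of $k$ is unsupported, and it is not what the argument needs.

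Bubbling $\varphi_j$ first and lifting the bubble back also fails. By Hurwitz and the hyperbolicity of $U$, that bubble lies in $P_f$, and nothing forces the correspondingly rescaled lifts to converge.

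The paper resolves this with two lifts:
\begin{enumerate}
\item Lift to $\psi_\nu$ at some depth $k$ with center $y\notin\Sigma$. This gives a fixed bound $\int_{\bD_R}\psi_\nu^*\omega_H\le C$.
\item Choose $\ell$ with $q^\ell>C$, and lift again to $\chi_\nu$ with $f^\ell\circ\chi_\nu=\psi_\nu$ and center in $f^{-\ell}(y)\setminus\Sigma$. This gives some bound $C'$ whose size is irrelevant.
\item Brody rescaling of $\chi_\nu$ then produces a rational curve $g$.
\item Composition with $f^\ell$ is continuous, so $f^\ell\circ g$ is a limit of rescalings of $\psi_\nu$. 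Hence $q^\ell\le q^\ell\deg g^*H=\deg(f^\ell\circ g)^*H\le C$, a contradiction.
\end{enumerate}
Your proposed fix (bubble the lifts first, then push forward) points the right way. It closes only once this intermediate level, carrying the fixed bound $C$, is inserted.
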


\begin{proof}[Proof of Theorem~\ref{mainthm:rationally-connected}]
If $\{r_n\}$ is bounded, Corollary~\ref{cor:divisorial-tower} gives a reduced divisor $D_\infty$ containing the ramification locus with $f^{-1}(D_\infty)=D_\infty$.  Thus $f:X\setminus D_\infty\to X\setminus D_\infty$ is \'etale, and Meng--Zhong's theorem~\cite[Th.~1.5]{MengZhong} implies that $(X,D_\infty)$ a toric pair.
Note that every unit on the torus $T$ is a constant times a character. Thus $f|_T$ is a translation composed with a torus homomorphism. By the finiteness and surjectivity, we know that this homomorphism an isogeny, which gives Case~\textup{(1)}.

Suppose $\{r_n\}$ is unbounded.  If either $\operatorname{trdeg}_{\bC}K_\infty<N$ or, after constructing $F_\infty$ as in Section~\ref{unit_map_sect}, $\operatorname{trdeg}_{\bC}F_\infty<N$, the corresponding field constructed in the proof of Theorem~\ref{mainthm:rationally-connected_Kob} gives Case~\textup{(2)}.  Otherwise, we can get a generically finite morphism $\Theta:U_M\to W$ from \eqref{Psi_unit_map}, where $W$ is the Zariski closure of $\Psi(U_M)$ in the product torus.  Each coordinate projection $W\to Z$ is dominant.  Consequently the stabilizer of $W$ is contained in the product of the finite stabilizers of $Z$, and thus is finite. Note that $W$ is of log-general type by Theorem~\ref{thm:kawamata-abramovich}. Now apply Theorem~\ref{tautness-criterion} to obtain the tautness of $U$.
\end{proof}

The rest of this section is devoted to the proof of Theorem~\ref{tautness-criterion}.

We first take a smooth projective equivariant
compactification of the torus and let $\overline W$ be the closure of $W$ there. By Theorem~\ref{thm:pkob}, there exists a proper algebraic subset
$S\subsetneq\overline W$ such that for every sequence of holomorphic disks $\left\{ h_\nu:\bD \to W \right\}$ with $h_\nu(0)\to w\in W\setminus S$, a subsequence converges uniformly on compact subsets as maps to $\overline W$.

\begin{lem}[Uniform energy bound]\label{uniform_energy_bound}
Let $V\subset X$ be Zariski open, let $\theta:V\to W$ be generically finite, and
let $H$ be ample on $X$.  There is a proper algebraic subset
$\Sigma_\theta\subsetneq V$ such that, if there is a sequence of holomorphic disks $\left\{ g_\nu:\bD\to V \right\}$ with $g_\nu(0)\to y\in V\setminus\Sigma_\theta$ and $\theta\circ g_\nu$ converges in a projective compactification of $W$, then
for every $0<r<1$,
\begin{equation}\label{eq:area-bound-general}
 \sup_\nu\int_{\bD_r}g_\nu^*\omega_H<\infty,
\end{equation}
where $\omega_H$ is a positive curvature form representing $c_1(H)$.    
\end{lem}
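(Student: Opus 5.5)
We will compare the Kähler area on $X$ with the area on a projective compactification of $W$ via the graph of $\theta$. Let $\overline W$ be a projective compactification of $W$ carrying a Kähler form $\omega_{\overline W}$, and let $\Gamma\subset X\times\overline W$ be the Zariski closure of the graph of $\theta$, with projections $p_1:\Gamma\to X$ and $p_2:\Gamma\to\overline W$. Since $\theta$ is generically finite, $p_2$ is generically finite onto its image. The central algebraic input is a Kodaira-lemma type comparison: $p_2^*A$ is big on $\Gamma$ for $A$ ample on $\overline W$. Hence, after passing to a resolution $\widehat\Gamma$, we can write $m\,p_2^*A\sim p_1^*H+E$ with $E$ effective, for some $m\ge1$. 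Choose $\Sigma_\theta$ to contain the image in $V$ of $\operatorname{Supp}E$, the indeterminacy/non-isomorphism locus of the resolution, and the locus where $\theta$ is not finite. This is a proper algebraic subset.

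At the level of forms, the relation says that on $\widehat\Gamma\setminus\operatorname{Supp}E$ we have
\[
p_1^*\omega_H\le m\,p_2^*\omega_A+dd^c\log\|s_E\|^2 ,
\]
up to smooth terms. Here $s_E$ is the canonical section of $E$ with a suitable Hermitian metric; we absorb the curvature of $\mathcal O(E)$ by adjusting constants, using that $p_1^*\omega_H$ and $p_2^*\omega_A$ are smooth semipositive. Lift each $g_\nu$ to $\tilde g_\nu:\bD\to\widehat\Gamma$, which is possible away from $\Sigma_\theta$ near the origin. Then integrate against a cutoff $\chi$ supported in $\bD_{r'}$ with $r<r'<1$ and equal to $1$ on $\bD_r$. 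This gives
\[
\int_{\bD_r}g_\nu^*\omega_H\le m\int_{\bD_{r'}}(\theta\circ g_\nu)^*\omega_A+\int\log\|s_E\circ\tilde g_\nu\|^2\,dd^c\chi .
\]

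The first term is uniformly bounded because $\theta\circ g_\nu$ converges uniformly on $\overline{\bD_{r'}}$ in $\overline W$. The second term is bounded above because $\log\|s_E\|\le C$. The remaining difficulty is the sign of $dd^c\chi$, since $\log\|s_E\circ \tilde g_\nu\|^2$ can be very negative. We will handle it with the sub-mean value property. The function $\log\|s_E\circ\tilde g_\nu\|^2$ is quasi-subharmonic, with $dd^c$-bounded below by $-C\tilde g_\nu^*\omega$. Also, $g_\nu(0)\to y\notin\Sigma_\theta$ keeps it bounded below at the center. We will replace the cutoff by a Jensen/Poisson formula on circles: choose radii where the integral is controlled, and use a Green-function version of the inequality. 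This reduces the bound on $\int\log\|s_E\|$ over circles to its value at $0$ and the boundedness of the first term.

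We expect the main obstacle to be this last step. The circularity is that the correction term involves $\tilde g_\nu^*\omega$ itself. The plan is to absorb it by choosing the metric on $E$ so that $-dd^c\log\|s_E\|^2\le \varepsilon p_1^*\omega_H+C p_2^*\omega_A$ with $\varepsilon<1$ (Kodaira with small perturbation). We would then move the $\varepsilon$-term to the left side and combine it with the Green-formula estimate. Integrating over $r\le s\le r'$ then yields \eqref{eq:area-bound-general}.
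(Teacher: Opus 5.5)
Your algebraic set-up matches the paper's: closure of the graph and a resolution, Kodaira's lemma $m\,p_2^*A\sim p_1^*H+E$, $\Sigma_\theta$ containing the image of $\operatorname{Supp}E$, and the lower bound for $\log\|s_E\|$ at the centre coming from $g_\nu(0)\to y\notin\Sigma_\theta$. The gap is the analytic step, which you leave open. The ``circularity'' you see there comes only from writing the Kodaira relation up to smooth terms. Give $\mathcal O(E)\cong\mathcal O(m\,p_2^*A-p_1^*H)$ the metric induced by those of $A$ and $H$. Poincar\'e--Lelong then gives an exact identity of currents on $\widehat\Gamma$,
\[
p_1^*\omega_H=m\,p_2^*\omega_A-[E]+dd^c\log\|s_E\|^2 .
\]
For any other metric the discrepancy is $dd^c$ of a smooth, hence bounded, function on the compact $\widehat\Gamma$. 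So no error term of the form $C\,\tilde g_\nu^*\omega$ appears. In your notation the curvature bound holds with $\varepsilon=-1$, $C=m$, so $dd^c\log\|s_E\circ\tilde g_\nu\|^2\ge -m\,(\theta\circ g_\nu)^*\omega_A$ and there is nothing to absorb. With a genuine $0<\varepsilon<1$ and a compactly supported cutoff, your absorption would not obviously close. The $\varepsilon$-term comes weighted like $\int_0^s\frac{dt}{t}\int_{\bD_t}g_\nu^*\omega_H$, and if area concentrates near $0$ this is not controlled by $\int\chi\,g_\nu^*\omega_H$.

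The clean finish, which is what the paper does, is to replace the cutoff by the Green function $\log^+(R/|z|)$. Concretely, integrate the identity over $\bD_t$ and then against $dt/t$ on $(0,R)$. Jensen's formula gives, up to the normalisation of $dd^c$,
\[
T_{g_\nu,H}(R)=m\,T_{\theta\circ g_\nu,A}(R)-N_{\tilde g_\nu,E}(R)+\frac{1}{2\pi}\int_0^{2\pi}\log\|s_E(\tilde g_\nu(Re^{i\vartheta}))\|\,d\vartheta-\log\|s_E(\tilde g_\nu(0))\|.
\]
The Laplacian of this weight is a negative point mass at $0$ plus a positive measure on $|z|=R$. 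So you need only two bounds: an upper bound for $\log\|s_E\|$ on the circle, from compactness of $\widehat\Gamma$, and a lower bound at the centre, from the hypothesis on $y$. The term $N\ge0$ is discarded, and no sub-mean-value estimate is required. The first term is bounded by the convergence of $\theta\circ g_\nu$ on $\overline{\bD_R}$. The inequality $\log(R/r)\int_{\bD_r}g_\nu^*\omega_H\le T_{g_\nu,H}(R)$ then finishes the proof.

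One smaller point: all of this needs $\tilde g_\nu$ on the whole of $\bD_R$, not only near the origin. Take the resolution to be an isomorphism over $V$, which is possible because the graph over $V$ is isomorphic to the smooth $V$. Alternatively, note that a holomorphic disc not contained in the centre of a proper modification lifts.
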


\begin{proof}
We first resolve the indeterminacy locus of the extended rational map of $\theta$ between $X$ and $\overline W$ and get $ \widetilde X\xrightarrow{\mu}X$, $\widetilde X\xrightarrow{\widetilde\theta}\overline W$ with $\mu$ an isomorphism over $V$.  If $L$ is ample on $\overline W$, then
$\widetilde\theta^*L$ is big because $\theta$ is generically finite.  By Kodaira's lemma, there is an effective divisor $E$ such that
\begin{equation}\label{L-H-E}
 a\widetilde\theta^*L\sim\mu^*H+E.
\end{equation}
Let $\Sigma_\theta$ be the image of
$E\cap\mu^{-1}(V)$ in $V$.  

To prove the estimate, we use the Nevanlinna's order function $T_{g_\nu,H}(r):= \int^r_0 \frac{dt}{t} \int_{\bD_t}g_\nu^*\omega_H$, $0<r < 1$. Then from \eqref{L-H-E} and the Poincar\'e--Lelong formula we obtain $\mu^*\omega_H
=
a\widetilde\theta^*\omega_L-[E]+dd^c\log\|s_E\|$, where $s_E$ is the  canonical section of $\mathcal{O}(E)$. Pulling this identity back by $\widetilde g_\nu:\bD \to\widetilde X$ and integrating it, Jensen's formula yields
\[
 T_{g_\nu,H}(R)
=
aT_{\theta\circ g_\nu,L}(R)
-
N_{\widetilde g_\nu,E}(R)
+
\frac{1}{2\pi}\int_0^{2\pi}
\log\|s_E(\widetilde g_\nu(Re^{i\vartheta}))\|\,d\vartheta
-
\log\|s_E(\widetilde g_\nu(0))\|.   
\]
Note that the counting function $N_{\widetilde g_\nu,E}(R)\ge0$ because $E$ is effective. Since $\widetilde X$ is compact, $\log\|s_E\|$ is uniformly bounded above, while the assumption $g_\nu(0)\longrightarrow y\notin \mu(E\cap\mu^{-1}(V))$ implies that $\widetilde g_\nu(0)$ stays in a compact subset of $\widetilde X\setminus E$. Thus $\log\|s_E(\widetilde g_\nu(0))\|$ is uniformly bounded below.  Consequently, for $R<1$
\begin{equation}\label{FMT}
 T_{g_\nu,H}(R)
 \le aT_{\theta\circ g_\nu,L}(R)+C_R,
\end{equation}
where $C_R$ is independent of $\nu$.  
By the convergence of $\left\{ \theta \circ g_{\nu}\right\}$, we know that the right hand side of \eqref{FMT} is bounded above by a uniform constant.

Finally, for $0<r<R<1$, one has
\[
 T_{g_\nu,H}(R) \ge \int^R_r \frac{dt}{t} \int_{\bD_t}g_\nu^*\omega_H
 \ge \int^R_r \frac{dt}{t} \int_{\bD_r}g_\nu^*\omega_H
 =\log(R/r)\int_{\bD_r}g_\nu^*\omega_H.
\]
Together with the upper bound of $T_{g_\nu,H}(R)$, we have proved \eqref{eq:area-bound-general}.
\end{proof}

We will apply Lemma~\ref{uniform_energy_bound} to $V=U_M$ and $\theta=\Theta$, and put
\[
 \Sigma:=\Sigma_\Theta\cup\Theta^{-1}(S).
\]
Its closure in $X$ is proper.

\begin{proof}[Proof of Theorem~\ref{tautness-criterion}]

Assume for contradiction that $U$ is not taut.  Choose
$\varphi_\nu:\bD\to U$ with no subsequence converging in
$\operatorname{Hol}(\bD,U)$ and no compactly divergent subsequence (cf. Definition~\ref{tautness}).  Since the sequence is
not compactly divergent, after composing with automorphisms of the disc and
passing to a subsequence we may arrange $\varphi_\nu(0)\longrightarrow p\in U$. 

Now we make use of the smoothness of $X$. Recall the generalized form of Hurwitz theorem (see {\cite[Theorem~3.6.11]{Kobayashi1998}}):

\begin{thm}\label{Hurwitz}
Let $Z$ be a complex analytic space and $S$ a \emph{Cartier divisor} in $Z$. Assume that a sequence $\left\{g_m\right\} \subset \operatorname{Hol}(\bD, Z \setminus S)$ converges to a map $g \in \operatorname{Hol}(\bD, Z)$. Then $g(\bD)$ is either in $Z \setminus S$ or in $S$.
\end{thm}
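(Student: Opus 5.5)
The plan is to reduce the statement to the classical one-variable Hurwitz theorem: a locally uniform limit of nowhere-vanishing holomorphic functions on a connected domain is either nowhere vanishing or identically zero. The Cartier hypothesis is used exactly to make this reduction possible, because it says that $S$ is locally the zero set of a \emph{single} holomorphic function. Set
\[
 A:=\{z\in\bD \mid g(z)\in S\}.
\]
Since $g$ is continuous and $S$ is closed in $Z$, the set $A$ is closed in $\bD$. Because $\bD$ is connected, it suffices to show that $A$ is also open. Then $A=\varnothing$, which means $g(\bD)\subset Z\setminus S$, or $A=\bD$, which means $g(\bD)\subset S$.

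To prove openness, fix $z_0\in A$ and put $x_0:=g(z_0)\in S$. Since $S$ is Cartier, there is an open neighborhood $V$ of $x_0$ in $Z$ and a holomorphic function $h$ on $V$ such that $S\cap V=\{h=0\}$. Choose a closed disc $\overline{\bD(z_0,\varepsilon)}\subset\bD$ small enough that $g$ maps it into a relatively compact open $V'\Subset V$. By uniform convergence on this compact disc, $g_m$ also maps it into $V$ for all sufficiently large $m$. Consider the functions
\[
 \phi_m:=h\circ g_m,\qquad \phi:=h\circ g,
\]
defined on $\bD(z_0,\varepsilon)$. Each $\phi_m$ is holomorphic and has no zeros, since $g_m$ avoids $S$. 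Moreover $\phi_m\to\phi$ uniformly on compact subsets. Here one uses that $h$ is uniformly continuous on a compact neighborhood of $g(\overline{\bD(z_0,\varepsilon)})$; if $Z$ is singular, one can embed $V$ locally into some $\bC^n$ and extend $h$ to an ambient holomorphic function there.

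The classical Hurwitz theorem then shows that $\phi$ is either nowhere zero or identically zero on the connected disc $\bD(z_0,\varepsilon)$. Since $\phi(z_0)=h(x_0)=0$, we get $\phi\equiv0$. Hence $g(\bD(z_0,\varepsilon))\subset\{h=0\}=S\cap V$, so $\bD(z_0,\varepsilon)\subset A$, and $A$ is open.

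The only delicate point is the localization step: making sure that the $g_m$ eventually land in the chart $V$ uniformly near $z_0$, and that composing with $h$ preserves locally uniform convergence on a possibly singular space. Both follow from local compactness of $Z$ and local embeddings of complex spaces. No other ingredient is needed. This is also where the Cartier hypothesis is essential: for a subvariety cut out locally by several equations the dichotomy fails. For example, the maps $z\mapsto(z,1/m)$ avoid the origin in $\bC^2$ but converge to a map that meets it at a single point.
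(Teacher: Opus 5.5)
Your proof is correct and is essentially the standard argument: the paper gives no proof of its own and simply cites \cite[Theorem~3.6.11]{Kobayashi1998}. That argument is the same as yours: write $S$ locally as $\{h=0\}$, apply the one-variable Hurwitz theorem to $h\circ g_m\to h\circ g$, and use connectedness of $\bD$ to pass from the local to the global dichotomy. The local embedding and the extension of $h$ are not needed, since continuity of $h$ on $V$ already gives locally uniform convergence of $h\circ g_m$, but they do no harm.
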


In our setting, we need the smoothness of $X$ to conclude that the PCF divisor $P_f$ is Cartier. In the following arguments, we repeatedly use the generalized Hurwitz theorem: if
$g_\nu:\Omega\to X\setminus P_f$ converges locally uniformly to
$g:\Omega\to X$ on a connected domain, then either $g(\Omega)\subset P_f$ or
$g(\Omega)\cap P_f=\varnothing$.

By the density in Lemma~\ref{lem:backward-density}, one can choose $k\ge M$ and $y\in f^{-k}(p)\setminus\Sigma$. For large $\nu$, a local inverse branch of $f^k:U_k\to U$ gives points
$y_\nu\in f^{-k}(\varphi_\nu(0))$ with $y_\nu\to y$.  Since $\bD$ is simply
connected, $\varphi_\nu$ lifts uniquely to $\psi_\nu:\bD\to U_k\subset U_M$ satisfying $f^k\circ\psi_\nu=\varphi_\nu$ and $\psi_\nu(0)=y_\nu$.
\[
\begin{tikzcd}
  \mathbb{D} \ar[r, "\psi_{\nu}"] \ar[rd, "\varphi_{\nu}"] & U_k \ar[d, "f^k"] \ar[r, hook] & U_M \\
  & U &
\end{tikzcd}
\]

Note that the sequence $\psi_\nu$ has \emph{no} subsequence converging locally uniformly as maps
to $X$:  If $\psi_{\nu_j}\to\psi$, then $\psi(0)=y\notin P_f$. Thus Theorem~\ref{Hurwitz} gives
$\psi(\bD)\subset U$.  Consequently
$\varphi_{\nu_j}\to f^k\circ\psi$ in $X$. Using the Hurwitz theorem again, plus the fact that $p\in U$, we obtain that $(f^k\circ\psi)(\bD)\subset U$, contrary to
the choice of $\varphi_\nu$. By passing to a subsequence and using the diagonal argument, one can find some $0 < r <1$ such that $\psi_\nu$ has no convergent subsequence on $\bD_r$ as maps to $X$.

Since $y\notin\Theta^{-1}(S)$, Theorem~\ref{thm:pkob} gives a subsequence for which
$\Theta\circ\psi_\nu$ converges in $\overline W$.  Since
$y\notin\Sigma_\Theta$, by Lemma~\ref{uniform_energy_bound} one can choose $r < R < 1$ and find a
constant $C$ such that
\begin{equation}\label{eq:psi-area-C}
 \int_{\bD_R}\psi_\nu^*\omega_H\le C
\end{equation}
for all $\nu$, while $\psi_\nu$ has no convergent subsequence on $\bD_r$.

Choose $\ell\ge M$ so large that $q^\ell>C$. Using the density in Lemma~\ref{lem:backward-density}, after increasing $\ell$ if necessary, one can choose $z\in f^{-\ell}(y)\setminus\Sigma$. Now we choose $z_\nu\in f^{-\ell}(y_\nu)$ with $z_\nu\to z$, and lift the maps
$\psi_\nu:\bD\to U$ through the covering $f^\ell:U_\ell\to U$ to $\chi_\nu:\bD\to U_\ell\subset U_M$ satisfying $f^\ell\circ\chi_\nu=\psi_\nu$ and $\chi_\nu(0)=z_\nu$.
\[
\begin{tikzcd}
  & & & U_{\ell} \ar[d, "f^{\ell}"] \ar[ld, hook] \\
  \bD \ar[urrr, "\chi_\nu", bend left=15] \ar[r, "\psi_{\nu}"] \ar[rd, "\varphi_{\nu}"] & U_k \ar[d, "f^k"] \ar[r, hook] & U_M \ar[d, "\Theta"] \ar[r, hook] & U \\
  & U & W &
\end{tikzcd}
\]

The same argument as before shows that $\chi_\nu$ has no convergent subsequence on
$\bD_r$ as maps to $X$.  Since $z\notin\Sigma$, Theorem~\ref{thm:pkob} and Lemma~\ref{uniform_energy_bound} give a uniform $H$-area bound for $\chi_\nu$ on every smaller disc.

Next we will use the non-normality of $\left\{ \chi_\nu \right\}$ and Brody's reparametrization argument to construct a "bubble".

Choose $r<r'<R <1$.  Fix Hermitian metrics on $X$ and on the disc. Consider the weighted derivative norm $M_\nu:=\max_{|z|\le r'}(r'-|z|)\,\|d\chi_\nu(z)\|$. We claim that after passing to a subsequence, $M_\nu \longrightarrow\infty$. Otherwise, $\|d\chi_\nu\|$ will be uniformly bounded on $\bD_r$. Then by Arzel\`a--Ascoli theorem, there exists a subsequence converging on $\bD_r$, contradicting the non-normality of $\left\{ \chi_\nu \right\}$ on $\bD_r$.

Now we choose $a_\nu\in\bD_{r'}$ where this maximum is attained, put
$\lambda_\nu:=\|d\chi_\nu(a_\nu)\|$, and set $g_\nu(\zeta):=\chi_\nu\!\left(a_\nu+\frac{\zeta}{\lambda_\nu}\right)$. Then $(r'-|a_\nu|)\lambda_\nu=M_\nu$, $g_\nu$ is defined on $\bD_{M_\nu}$ and
$\|dg_\nu(0)\|=1$.
Moreover, for $|\zeta|\le M_\nu/2$, the maximality of
$a_\nu$ gives
\[
 \|dg_\nu(\zeta)\|
 =\frac{1}{\lambda_\nu}
   \left\|d\chi_\nu\!\left(a_\nu+\frac{\zeta}{\lambda_\nu}\right)\right\|
 \le 2.
\]
Hence the Brody reparametrization argument and Arzel\`a--Ascoli theorem yield,
after passing to a subsequence, $\left\{g_\nu\right\}$ converges to an entire curve $g:\bC\longrightarrow X$ with $\|dg(0)\|=1$.

Note that Lemma~\ref{uniform_energy_bound} gives a
constant $C'>0$ such that
\[
 \int_{\bD_R}\chi_\nu^*\omega_H\le C'
\]
for all $\nu$.  Therefore, for every fixed $A>0$ and all sufficiently
large $\nu$,
\[
 a_\nu+\lambda_\nu^{-1}\Delta_A\subset\Delta_{r'}\subset\Delta_R,
\]
and hence, by lower semicontinuity of area,
\[
 \int_{\Delta_A}g^*\omega_H
 \le
 \liminf_{\nu\to\infty}
 \int_{a_\nu+\lambda_\nu^{-1}\Delta_A}\chi_\nu^*\omega_H
 \le C'.
\]
Letting $A\to\infty$ shows that $g$ has finite total energy, i.e. $\int_{\bC}g^*\omega_H<\infty$. By the removable singularity theorem, $g$ extends holomorphically
across $\infty$, giving a rational curve in $X$:
\[
 g:\bP^1\longrightarrow X.
\]

On the other hand, composing $g$ with $f^\ell$ and using the relation
$f^\ell\circ\chi_\nu=\psi_\nu$, lower semicontinuity of area and
\eqref{eq:psi-area-C} give, for every $A>0$,
\[
 \int_{\bD_A}(f^\ell\circ g)^*\omega_H\le C.
\]
Letting $A\to\infty$ yields
\[
 \deg\bigl((f^\ell\circ g)^*H\bigr)\le C.
\]
But $(f^\ell)^*H\sim q^\ell H$, and $H$ is ample. Thus
\[
 \deg\bigl((f^\ell\circ g)^*H\bigr)
 =q^\ell\deg(g^*H)\ge q^\ell>C,
\]
a contradiction.  Hence $U$ is taut.
\end{proof}

\section{Projective spaces and toric maps}
\label{sec:ramification-and-toric}

Let \(N\geq 2\) and \(f:\bP^N\to \bP^N\) be a PCF endomorphism of degree \(d\geq 2\) with PCF divisor \(P_f\). We will denote
\begin{equation}\label{eq:pullback-divisor-definitions}
D_0:=P_f,  \quad
  D_{n+1}:=\bigl((f^{n+1})^* D_0\bigr)_{\mathrm{red}}=(f^*D_{n})_{\mathrm{red}},
  \quad
  U_n:=\bP^N\setminus D_n,
\end{equation}
where the subscript \(\mathrm{red}\) means the corresponding reduced divisor. 
Since $f(D_0)\subseteq D_0$, one has
\begin{equation}\label{eq:pullback-chain-inclusion}
D_0\subseteq D_1\subseteq D_2\subseteq\cdots.
\end{equation}

\subsection{Some basic pullback formulas}\label{subsec:ramification}

Recall that the canonical bundle is the line bundle of holomorphic top-degree differential forms; on $\bP^N$, it is $\mathcal O_{\bP^N}(-N-1)$. We let \(R_f\) be the ramification divisor of \(f\) so that the canonical bundle of \(\bP^N\) satisfies
\begin{equation}\label{eq:ramification-canonical-formula}
 K_{\bP^N}=f^*K_{\bP^N}+R_f
\end{equation}
More precisely \(R_f\) is the zero divisor of the determinant of the differential \(\det(df):f^*K_{\bP^N}\to K_{\bP^N}\). It can be expressed as 
\[
R_f=\sum_E \bigl(e_E(f)-1\bigr)E,
\]
where the sum runs over the prime divisors \(E\subset \bP^N\), and \(e_E(f)\) is the ramification index of \(f\) along \(E\).

\begin{lem}
\label{lem:log_pullback}
For every $n\in \bN$, we have
\[
  f^*D_n=R_f+D_{n+1}, \qquad
  K_{\bP^N}+D_{n+1}=f^*(K_{\bP^N}+D_n).
\]
\end{lem}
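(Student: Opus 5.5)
The plan is to prove the first identity by comparing multiplicities prime divisor by prime divisor, and then to derive the second identity from it together with \eqref{eq:ramification-canonical-formula}.

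Fix $n$. Since $D_{n+1}=(f^*D_n)_{\mathrm{red}}$ by \eqref{eq:pullback-divisor-definitions}, the divisor $f^*D_n$ is supported on $D_{n+1}$. Let $E$ be a prime component of $D_{n+1}$. Then $f(E)$ is a prime divisor $D\subset D_n$, and because $D_n$ is reduced, the coefficient of $E$ in $f^*D_n$ equals $e_E(f)$. This is the one local computation: at the generic point of $E$, a local equation $t$ of $D$ pulls back to $u\,s^{e_E(f)}$ with $s$ a local equation of $E$ and $u$ a unit. Hence
\[
f^*D_n=\sum_{E\subset D_{n+1}} e_E(f)\,E=D_{n+1}+\sum_{E\subset D_{n+1}}\bigl(e_E(f)-1\bigr)E .
\]
The last sum is $R_f$ as soon as every prime $E$ with $e_E(f)\ge 2$ lies in $D_{n+1}$. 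This containment is the only point that needs an argument, and it is straightforward. If $E\subset\operatorname{Supp}R_f=\Crit(f)$, then $f(E)\subset P_f=D_0\subset D_n$ by \eqref{eq:pullback-chain-inclusion}, so $E\subset f^{-1}(D_n)=\operatorname{Supp}D_{n+1}$. Thus $f^*D_n=R_f+D_{n+1}$.

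For the second identity, pull back $K_{\bP^N}+D_n$ and use \eqref{eq:ramification-canonical-formula}, i.e. $f^*K_{\bP^N}=K_{\bP^N}-R_f$:
\[
f^*(K_{\bP^N}+D_n)=K_{\bP^N}-R_f+R_f+D_{n+1}=K_{\bP^N}+D_{n+1},
\]
as linear equivalence classes of divisors, or even as divisors if one uses the canonical divisors of a fixed rational form $\omega$ and of $f^*\omega$. I do not expect any real obstacle. The only care needed is the multiplicity-$e_E(f)$ statement, which is immediate because $\bP^N$ is smooth, so prime divisors are locally principal and $f$ is finite.
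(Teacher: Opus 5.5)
Your proof is correct and follows essentially the same route as the paper. Both compare coefficients along each prime divisor $E$, using that a ramified $E$ satisfies $f(E)\subset P_f\subset D_n$ (the paper phrases this as the case split $f(E)\subset D_n$ versus $f(E)\not\subset D_n$), and both then obtain the second identity by pulling back $K_{\bP^N}=f^*K_{\bP^N}+R_f$.
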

\begin{proof}
Let $E\subset\bP^N$ be a prime divisor with ramification index $e_E(f)$. 

Assume first that $f(E)\subset D_n$.  As $D_n$ is reduced, \(\operatorname{ord}_E(f^*D_n)=e_E(f)\). 
The coefficient of $E$ in $R_f$ is $e_E(f)-1$, while the coefficient of $E$ in $D_{n+1}$ is one.  Thus both terms \(f^*D_n,R_f+D_{n+1}\) have coefficient $e_E(f)$ along $E$. Assume now that $f(E)\not\subset D_n$.  Then $E\not\subset D_{n+1}$. In particular, \(E\) is not contained in the critical locus of \(f\), so $e_E=1$. Again the coefficient of \(E\) in both \(f^*D_n\) and \(R_f+D_{n+1}\) are the same. Thus, \(f^*D_n=R_f+D_{n+1}\).
  
The second formula of Lemma~\ref{lem:log_pullback} then follows from \eqref{eq:ramification-canonical-formula}.
\end{proof}

\begin{cor}\label{cor:degree-recurrence}
We have 
\[
\deg D_{n+1}-(N+1)=d\bigl(\deg D_n-(N+1)\bigr),
\quad
\deg D_n=N+1+d^n (\deg D_0-N-1).
\]
In particular, \(\deg D_0\geq N+1\). 
\end{cor}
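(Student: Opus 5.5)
Taking degrees in the first identity of Lemma~\ref{lem:log_pullback} is all it takes. For a divisor $D$ of degree $e$ on $\bP^N$, the pullback $f^*D$ has degree $de$, since $f^*\mathcal O_{\bP^N}(1)\cong\mathcal O_{\bP^N}(d)$. Next I compute $\deg R_f$ from \eqref{eq:ramification-canonical-formula}. There $K_{\bP^N}$ has degree $-(N+1)$ and $f^*K_{\bP^N}$ has degree $-d(N+1)$, so $\deg R_f=(d-1)(N+1)$.

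Applying $\deg$ to $f^*D_n=R_f+D_{n+1}$ then gives
\[
d\deg D_n=(d-1)(N+1)+\deg D_{n+1},
\]
which rearranges to $\deg D_{n+1}-(N+1)=d\bigl(\deg D_n-(N+1)\bigr)$. Equivalently, I can take degrees directly in the second identity $K_{\bP^N}+D_{n+1}=f^*(K_{\bP^N}+D_n)$. Iterating from $n=0$ gives the closed formula $\deg D_n=N+1+d^n(\deg D_0-N-1)$.

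For the inequality, suppose $\deg D_0<N+1$. Then $\deg D_0-N-1\le -1$, so $\deg D_n\le N+1-d^n$, which is negative once $d^n>N+1$. Since $d\ge2$, such an $n$ exists. This is impossible, because $D_n$ is an effective divisor and so has nonnegative degree. Hence $\deg D_0\ge N+1$.

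I expect no real obstacle. The only points to check are that Lemma~\ref{lem:log_pullback} holds for every $n\ge 0$, including $n=0$ with $D_0=P_f$, and that degrees are additive and scale by $d$ under $f^*$ on $\bP^N$; both are standard.
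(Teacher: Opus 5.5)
Your proof is correct. The recurrence and the closed formula are derived exactly as in the paper: take degrees in Lemma~\ref{lem:log_pullback}, using $\deg R_f=(d-1)(N+1)$.

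For the inequality $\deg D_0\ge N+1$ you take a slightly different route.

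\begin{itemize}
\item The paper uses only the case $n=0$, namely $d\deg D_0-\deg D_1=(d-1)(N+1)$. It combines this with $\deg D_0\le\deg D_1$, which comes from the inclusion $D_0\subseteq D_1$ (forward invariance of $P_f$). This gives $(d-1)(N+1)\le(d-1)\deg D_0$ in one step.
\item You use the full closed formula and the nonnegativity of the degree of an effective divisor, letting $n\to\infty$ to force a contradiction.
\end{itemize}

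Your version needs no monotonicity of the tower beyond what is already built into Lemma~\ref{lem:log_pullback}, whose proof does rely on $P_f\subseteq D_n$. The paper's version is a single-step comparison. Both arguments are complete.
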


\begin{proof}
Let $H$ be the hyperplane class. Since \(K_{\bP^N}\sim -(N+1)H\) and $f^* H\sim dH$, the two equalities in Corollary~\ref{cor:degree-recurrence} follow from Lemma~\ref{lem:log_pullback}. 
Taking \(n=1\) in the equality we get 
\begin{equation}\label{eq:D1-degree-formula}
d\deg D_0-\deg D_1=(d-1)(N+1).
\end{equation}
Note that $\deg D_0\leq \deg D_{1}$ because $D_0\subset D_{1}$, so \(d\deg D_0-\deg D_1\leq (d-1)\deg D_0\). Then the inequality \(\deg D_0\geq N+1\) follows from \eqref{eq:D1-degree-formula}. 
\end{proof}

\subsection{The minimal degree case}\label{subsec:minimal-degree}
An endomorphism \(f\) of a projective variety \(Y\) is called \emph{polarized} if there exists an ample Cartier divisor \(H\) of \(Y\) such that \(f^*H\sim dH\) for some integer \(d\geq 2\).

A normal variety \(Y\) of dimension \(N\) is \emph{toric} if it contains an algebraic torus \(T:=(\bC^*)^N\) as a dense open subset and if the natural action of \(T\) on itself extends to \(Y\). In this case the complement \(Y\setminus T\) is either empty or a divisor, and \((Y,Y\setminus T)\) is called a \emph{toric pair}. 

In this subsection we will explain how a Theorem of Meng--Zhong from birational geometry leads to a reduction in our study of PCF endomorphisms.
\begin{thm}[{Meng--Zhong~\cite[Th.~1.5]{MengZhong}}]\label{thm:MengZhong}
If a smooth rationally connected variety \(X\) admits a polarized endomorphism \(f\) and a reduced divisor \(D\) such that \(f:X\setminus D\to X\setminus D\) is étale, then \((X,D)\) is a toric pair. 
\end{thm}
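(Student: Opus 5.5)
Since this is the theorem of Meng--Zhong cited from \cite{MengZhong}, the paper itself will simply invoke it. If I had to prove it, I would take a log Calabi--Yau approach and finish with the complexity criterion for toric pairs.

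\emph{Step 1: the log canonical class is $f$-invariant.} First I would check that $f^{-1}(D)=D$. The inclusion $f^{-1}(D)\subset D$ holds because $f$ maps $X\setminus D$ to itself. For equality, finiteness of $f$ together with surjectivity onto $X$ forces $f(D)\subset D$. Since $f$ is étale off $D$, the ramification divisor is supported on $D$, and the computation of Lemma~\ref{lem:log_pullback} (done componentwise, exactly as there) gives
\[
K_X+D=f^*(K_X+D).
\]
Because $f$ is polarized, every eigenvalue of $f^*$ on $N^1(X)_{\mathbb R}$ has modulus $q>1$ (the Meng--Zhang result on polarized endomorphisms). Hence the fixed class $K_X+D$ is numerically trivial. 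On a rationally connected manifold we have $\Pic^0=0$, so in fact $K_X+D\sim_{\mathbb Q}0$.

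\emph{Step 2: singularities of the pair.} Next I would show that $(X,D)$ is log canonical. Suppose it were not, and let $E$ be a divisorial valuation with log discrepancy $a_E(X,D)<0$. Pulling back along iterates $f^n$, the invariance $K_X+D=f^*(K_X+D)$ together with the ramification formula for log discrepancies multiplies the negativity by the ramification index. Polarization makes these indices grow, producing infinitely many non-lc centres, which is a contradiction. This is the standard argument of Broustet--Höring and Nakayama. So $(X,D)$ is a log canonical log Calabi--Yau pair with $D$ reduced.

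\emph{Step 3: toric conclusion via complexity.} By the theorem of Brown--McKernan--Svaldi--Zong, an lc pair with $K_X+D\sim_{\mathbb Q}0$ and complexity
\[
c(X,D)=\dim X+\operatorname{rank}\Pic(X)-\#\{\text{components of }D\}<1
\]
is toric, with $D$ the toric boundary. It therefore suffices to show that $D$ has at least $\dim X+\rho(X)$ components. I would get this in two parts. First, run an $f$-equivariant MMP (after replacing $f$ by an iterate, as in Meng--Zhang), which reduces to a tower of equivariant Mori fibre spaces. Second, induct on dimension: over each fibration, the restricted pair is again invariant and étale off the boundary. A divisorial contraction should cost exactly one invariant component of $D$, and a Mori fibration of relative Picard number one should contribute one more horizontal component than the relative dimension. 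Alternatively, one could show that the quasi-Albanese of $X\setminus D$ has dimension $\dim X$, using the growth of units as in Lemma~\ref{lem:unit-growth}.

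\emph{Main obstacle.} The hardest step is the component count in Step 3. I would first try the equivariant MMP induction. The delicate point is that contracted or $f$-periodic prime divisors must lie in $D$; this should follow because the exceptional locus of each step is $f^{-1}$-periodic and, by the degree argument of Lemma~\ref{lem:backward-density}, totally invariant subvarieties of a polarized map are contained in the branch locus. The base case of Picard number one requires $D$ to have $\dim X+1$ components, and I expect the bulk of the work to be there.
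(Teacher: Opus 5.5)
The paper gives no proof of this statement. It is quoted from Meng--Zhong and used as a black box, so your sketch has to stand on its own.

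\textbf{What is sound.} Step~1 and the reduction in Step~3 are correct: $f^{-1}(D)=D$, $K_X+D=f^*(K_X+D)\sim_{\mathbb{Q}}0$, and by Brown--McKernan--Svaldi--Zong it suffices to prove $\#\operatorname{Irr}(D)\ge\dim X+\rho(X)$. For Step~2, simply cite Broustet--H\"oring. The mechanism you describe does not give a contradiction: log discrepancies of a non-lc pair are already unbounded below, and there is no finiteness of non-lc centres to contradict.

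\textbf{The gap.} The component inequality is the whole content of the theorem, and neither of your two routes to it works as stated.

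\emph{The quasi-Albanese route is circular.} Here $f^{-1}(D)=D$, so the tower $D_n$ is constant. Lemma~\ref{lem:unit-growth} concerns the opposite, non-stabilizing regime and gives nothing. Moreover, the exact sequence in its proof gives $\dim\operatorname{Alb}_{\rm q}(X\setminus D)=\#\operatorname{Irr}(D)-\operatorname{rank}\langle[D_i]\rangle$. So ``$\overline q(X\setminus D)=\dim X$'' merely restates complexity zero; you would still need an independent dynamical argument for it.

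\emph{The MMP route breaks exactly where smoothness is lost.} Even granting that divisorial contractions and flips preserve the complexity, the data you propagate to the singular models do not imply the count. Those data are: a totally invariant reduced boundary, $f$ \'etale off it with smooth complement, polarization, $K+D\equiv0$, and lc. Here is an example satisfying all of them.
\begin{itemize}
\item Let $\tau(u,v)=(v,-u)$ on $\bP^1\times\bP^1$. It has order $4$, acts freely on $(\bC^*)^2$, and commutes with $f(u,v)=(u^d,v^d)$ for odd $d$.
\item Set $X=(\bP^1\times\bP^1)/\langle\tau\rangle$. This is a klt $\mathbb{Q}$-factorial Fano surface of Picard number one.
\item The induced map on $X$ is polarized and \'etale off the totally invariant image $D'$ of the toric boundary, and $X\setminus D'$ is smooth.
\item Yet $D'$ has only two components, and $(X,D')$ is not toric, since $\pi_1(X\setminus D')$ is non-abelian.
\end{itemize}

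So your Picard-number-one base case (a Mori fibre space over a point) is false as formulated. The horizontal-component count for Mori fibre spaces likewise cannot follow from the inherited hypotheses. Any induction must carry an extra invariant that genuinely uses the smoothness of the original $X$ to exclude such finite quotients. Supplying that invariant is the real work, and the proposal leaves it undone.
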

We refer to \cite{MengZhong} for stronger statements and to \cite{MYY25} for a variant of Meng--Zhong's theorem.

\begin{cor}\label{cor:projective-space-minimal-toric-pair}
Let \(f:\bP^N\to \bP^N\) be a PCF endomorphism. If the PCF divisor \(P_f\) has degree exactly \(N+1\), then \((\bP^N,P_f)\) is a toric pair. Up to a conjugation by an element of \(\PGL(N+1,\bC)\), the divisor \(P_f\) is the union of \(N+1\) coordinate hyperplanes defined by the equation \(x_0x_1\cdots x_N=0\) where \(x_0,\cdots,x_N\) are homogeneous coordinates, and \(f\) is of the form
\[
f:[x_0:\cdots:x_N]\mapsto [\alpha_0 x_{\sigma(0)}^{d}:\cdots:\alpha_N x_{\sigma(N)}^d]
\]
with \(\alpha_0,\cdots,\alpha_N\in \bC^*\) and \(\sigma\in \mathfrak{S}_{N+1}\).
\end{cor}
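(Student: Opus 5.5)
The plan is to show first that the degree condition forces the tower of pullback divisors to stabilize, then to invoke Meng--Zhong, and finally to pin down the shape of \(f\) on the torus. By Corollary~\ref{cor:degree-recurrence}, if \(\deg D_0=N+1\) then \(\deg D_n=N+1\) for every \(n\). Since \(D_0\subseteq D_1\subseteq\cdots\) are reduced divisors of the same degree, they all coincide: \(D_1=D_0\). By definition \(D_1=(f^*D_0)_{\mathrm{red}}\), so \(f^{-1}(P_f)=P_f\). Lemma~\ref{lem:log_pullback} gives \(f^*P_f=R_f+P_f\), hence \(\operatorname{Supp}R_f\subset P_f\). It follows that \(f:\bP^N\setminus P_f\to\bP^N\setminus P_f\) is a finite \'etale self-map. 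Since \(f\) is polarized (\(f^*H\sim dH\)) and \(\bP^N\) is smooth and rationally connected, Theorem~\ref{thm:MengZhong} shows that \((\bP^N,P_f)\) is a toric pair.

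Next we identify the toric structure. The complement \(T=\bP^N\setminus P_f\) is a torus \((\bC^*)^N\), so \(\bP^N\) is a smooth complete toric variety with Picard rank one. Such a variety is, as a toric variety, \(\bP^N\) with its standard fan: it has exactly \(N+1\) rays, since the number of rays equals \(N+\rho\). The boundary is therefore the union of \(N+1\) torus-invariant prime divisors. A toric automorphism identifying the fan with the standard one is a projective automorphism, because \(\operatorname{Aut}(\bP^N)=\PGL(N+1,\bC)\). After conjugating by it, \(P_f=\{x_0\cdots x_N=0\}\). Alternatively, and more directly, we can argue degree by degree. \(P_f\) has \(N+1\) components, since otherwise there would be too few units to make \(\bP^N\setminus P_f\) a torus. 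Each unit group computation in Lemma~\ref{lem:unit-growth} gives rank \(=\#\text{components}-1=N\). This forces \(P_f\) to consist of \(N+1\) hyperplanes in general position.

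Finally we determine \(f\). Since \(f^{-1}(P_f)=P_f\), the map \(f\) permutes the components, because \(f(P_f)\subset P_f\) and preimages of components are components. Hence there is \(\sigma\in\mathfrak S_{N+1}\) with \(f^*\{x_i=0\}\) supported on \(\{x_{\sigma(i)}=0\}\). Now write \(f=[F_0:\cdots:F_N]\) with \(F_i\) of degree \(d\). The divisor of \(F_i\) is supported on \(\{x_{\sigma(i)}=0\}\), so \(F_i=\alpha_i x_{\sigma(i)}^d\). We also need \(\sigma\) to be a bijection. This holds because \(f\) is surjective and \(f^{-1}(\{x_i=0\})\neq\emptyset\); moreover, if two \(F_i\) shared a variable, the \(F_i\) would have a common zero, contradicting that \(f\) is a morphism.

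The main obstacle is the identification step: passing from an abstract toric pair structure on \(\bP^N\) to the statement that the boundary is \(N+1\) hyperplanes in linear coordinates. I would handle it via the classification of smooth complete toric varieties of Picard number one, using that the torus action extends to a subgroup of \(\PGL(N+1,\bC)\), a maximal torus, which is diagonalizable.
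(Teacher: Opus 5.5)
Your proposal is correct and follows the same route as the paper. Corollary~\ref{cor:degree-recurrence} forces $D_1=D_0$, so $f$ is finite \'etale on $\bP^N\setminus P_f$ and Theorem~\ref{thm:MengZhong} gives the toric pair; the standard toric structure of $\bP^N$ and the support of $\operatorname{div}(F_i)$ then determine $f$. The differences are minor: you justify the uniqueness of the toric structure on $\bP^N$, which the paper only asserts, and you obtain $F_i=\alpha_i x_{\sigma(i)}^d$ by first showing that $f$ permutes the $N+1$ boundary hyperplanes, whereas the paper first shows each $F_i$ is a monomial and then evaluates at the coordinate points $e_j$ to get pure powers and the permutation.
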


\begin{proof}
As \(\deg D_0=N+1\), Corollary~\ref{cor:degree-recurrence} implies that all \(D_i\) are the same. Then Meng--Zhong's theorem (Th.~\ref{thm:MengZhong}) implies that \((\bP^N,P_f)\) is a toric pair. The only toric structure of \(\bP^N\) is up to conjugation the one in the statement.

In homogeneous coordinates already chosen according to the toric structure, we write the map \(f\) as
\[
f:[x_0:\cdots:x_N]\mapsto [F_0:\cdots:F_N]
\]
where the \(F_i\)s are homogeneous polynomials without common factor. The pullback of the coordinate hyperplane \(x_i=0\) is the divisor defined by \(F_i=0\), whose support is contained in \(P_f\). Therefore each \(F_i\) is necessarily a monomial of the form \(\alpha_i x_0^{d_0}\cdots x_N^{d_N}\). 

Now for each index \(j\) consider the point \(e_j\in \bP^N\) whose homogeneous coordinates are all zero except the \(j\)-th one. As \(f\) is an endomorphism, it is well defined at \(e_j\), so at least one index \(i=i_j\) satisfies \(F_i(e_j)\neq 0\). This implies that \(F_i=\alpha_i x_j^d\). Then \(j\mapsto i_j\) is necessarily a permutation. 
\end{proof}

\begin{proof}[Proof of Theorem~\ref{mainthm:Projective-space-pcf}]
By Theorem~\ref{mainthm:rationally-connected}, either Case~\textup{(2)} holds or there is
a reduced divisor $D_\infty$ with
$f^{-1}(D_\infty)=D_\infty$ and $\Crit(f)\subset D_\infty$.  In the latter case
$f$ is etale on $\bP^N\setminus D_\infty$.  From Theorem~\ref{thm:MengZhong} we know that $(\bP^N,D_\infty)$ is a toric pair.  After a
projective change of coordinates,
\[
 D_\infty=\bigcup_{i=0}^N\{x_i=0\}.
\]
Hence $\deg D_\infty = N+1$.

On the other hand, we repeatedly use the fact that 
\[
f(\Crit(f)) \subset f(f^{-1}(D_\infty)) \subset D_\infty
\]
and get $D_0 = P_f \subset D_\infty$. Combining with Corollary~\ref{cor:degree-recurrence} one has
\[
N+1 \le \deg P_f \le \deg D_\infty =N+1.
\]
Now Theorem~\ref{mainthm:Projective-space-pcf} follows from Corollary~\ref{cor:projective-space-minimal-toric-pair}.
\end{proof}

\section{PCF endomorphisms on $\bP^2$}

In this section, we give the proof of Theorem~\ref{mainthm:P2}, which is a refinement of Theorem~\ref{mainthm:Projective-space-pcf} on the projective plane. We also construct explicitly a PCF endomorphism $F$ on $\bP^2$ such that the PCF complement of $F$ is Kobayashi hyperbolic but not hyperbolically embedded into $\mathbb{P}^2$, while $F$ as well as any iterate of $F$ does not preserve a rational pencil.

\subsection{Classification on $\bP^2$}\label{Sect_classify_P2}

Let $f$ be as in Theorem~\ref{mainthm:P2}, and put
$D_n=(f^{-n}P_f)_{\rm red}$ and $U_n=\bP^2\setminus D_n$. We use the non-tautness to produce a pencil with specific property:

\begin{lem}\label{lem:p2-unit-pencil}
Suppose that $U=U_0$ is not taut and that $D_n$ does not stabilize.  Then there
is an irreducible equivariant pencil $\pi:\bP^2\dashrightarrow\bP^1$ with $\pi\circ f=g\circ\pi$ and $g$ is PCF of degree $\deg f$, such that, for
$L=\pi^*\bC(\bP^1)$ and all sufficiently large $n$,
\begin{equation}\label{eq:p2-relative-unit-rank}
\operatorname{rk}\!\left(\frac{\mathcal{O}(U_n)^*}{\mathcal{O}(U_n)^*\cap L^*}\right)\le 1.
\end{equation}
\end{lem}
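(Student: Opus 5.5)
We follow the case analysis in the proof of Theorem~\ref{mainthm:rationally-connected}, now with $X=\bP^2$ and $N=2$. Since $D_n$ does not stabilize, the numbers $r_n$ are unbounded. If $\operatorname{trdeg}_{\bC}K_\infty=2$ and $\operatorname{trdeg}_{\bC}F_\infty=2$, then \eqref{Psi_unit_map} yields a generically finite morphism $\Theta:U_M\to W$ into a torus with finite stabilizer. Theorem~\ref{tautness-criterion} then makes $U$ taut, contrary to hypothesis. So one of the two transcendence degrees equals $1$, and the construction in the proof of Theorem~\ref{mainthm:rationally-connected_Kob} produces a relatively algebraically closed $f^*$-stable subfield $L$ of transcendence degree one. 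By Lemmas~\ref{lem:invariant-subfield-chow}, \ref{lem:polarization-descent} and Proposition~\ref{prop:pcf-descent}, we obtain $\pi:\bP^2\dashrightarrow Y$ with $Y$ a rationally connected curve, hence $Y=\bP^1$, so $L=\pi^*\bC(\bP^1)$. The map $\pi$ has connected fibers, so the pencil is irreducible. It is equivariant with $g$ PCF and $q$-polarized, where $q=\deg f$ is the polarization constant of $f$ for $H=\cO(1)$. Since the polarization constant of an endomorphism of $\bP^1$ is its degree, $\deg g=\deg f$.

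For the rank bound \eqref{eq:p2-relative-unit-rank}, we split into two cases according to which transcendence degree dropped. If $\operatorname{trdeg}K_\infty=1$, then every unit of every $U_n$ is algebraic over $K_{n_0}$ and therefore lies in $L$, because $L$ is relatively algebraically closed. The quotient then has rank $0$. If instead $K_\infty$ has transcendence degree $2$ but $F_\infty$ has transcendence degree $1$, then $L$ is the relative algebraic closure of the field generated by the pullbacks $\beta\circ f^j$. Here the unit map $\alpha:U_n\to(\bC^*)^\rho$ has image $Y_1$ of dimension $2$. Its stabilizer $A=\operatorname{Stab}^0(Y_1)$ makes $Z=Y_1/A$ a curve, because $\beta^*\bC(Z)\subset L$ has transcendence degree one. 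Hence $\dim A=1$.

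The main step is to show that characters of $(\bC^*)^\rho$ trivial on $A$ give units in $L$, so that the quotient $M_n/(M_n\cap L^*/\bC^*)$ embeds into the character group of $A$, which has rank $1$. A character $\chi$ trivial on $A$ factors through $T/A$, so $\chi\circ\alpha$ is a function of $\beta$ and lies in $E\subset L$. Conversely, a unit $u\in M_n$ lying in $L$ restricts to $Y_1$ as a function that is constant along the one-dimensional fibers of $Y_1\to Z$, which are $A$-orbits. Because $Y_1$ is nondegenerate (Lemma~\ref{lem:unit-nondegenerate}), the character $\chi_u$ restricted to $A$ is then trivial. Thus the units lying in $L$ are exactly those whose characters kill $A$, and the quotient maps injectively into $\operatorname{Hom}(A,\bC^*)\cong\bZ$.

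Two points remain. First, $n$ must be large enough that the transcendence degrees have stabilized, and $L$ must be independent of $n$. This holds because $L$ is determined as the relative algebraic closure, and for larger $n$ the corresponding field is algebraic over the earlier one, so the same $L$ results. Second, the argument above fixes $A$ for one $n$, whereas the bound is needed for all large $n$. We would handle this by rerunning the argument for each large $n$: the resulting pencil field is the same $L$, so the relative rank stays at most $1$. The step we expect to be hardest is the identification of units lying in $L$ with characters trivial on $A$ in this second case.
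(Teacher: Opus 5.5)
Most of your argument holds up. Excluding $\operatorname{trdeg}_{\bC}K_\infty=\operatorname{trdeg}_{\bC}F_\infty=2$ by Theorem~\ref{tautness-criterion} is correct. So are the construction of the pencil and of $g$ with $\deg g=\deg f$, the rank-$0$ bound when $\operatorname{trdeg}_{\bC}K_\infty=1$, and the rank bound at the single level $n$ where $\alpha,\beta,Z$ were built.

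\textbf{The gap is uniformity in $n$ in your second case.} Rerunning the argument at a level $n'>n$ produces $A_{n'}$, $Z_{n'}$, $E'=\beta_{n'}^*\bC(Z_{n'})$ and a pencil field $L'$, and the bound you get there is relative to $L'$. Your reason for $L'=L$ (``for larger $n$ the corresponding field is algebraic over the earlier one'') is exactly what has to be proved. The stabilization of transcendence degrees you have available concerns $K_n$, which has transcendence degree $2$ here and is irrelevant to $L$, or the pullbacks $(f^j)^*E$ at a fixed level. Neither says anything about how $E'$ relates to $E$. A priori $f$ could preserve several pencils, and different levels could pick out different ones.

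\textbf{How to close it.} The paper uses a second application of Theorem~\ref{tautness-criterion}. If $(\beta_{n_0},\beta_n):U_n\to Z_{n_0}\times Z_n$ had two-dimensional image, it would be generically finite onto a subvariety with finite stabilizer, since both projections are dominant onto curves with finite stabilizers. That would force tautness, so $\beta_n^*\bC(Z_n)$ is algebraic over $E$ and lies in $L$.

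You could instead argue by functoriality:
\begin{enumerate}
\item The inclusion $M_n\subset M_{n'}$ gives a homomorphism $p:T_{n'}\to T_n$ with $\alpha_n=c\cdot(p\circ\alpha_{n'})$ on $U_{n'}$ for some $c\in T_n$.
\item Hence $p(A_{n'})\subset A_n$, so characters killing $A_n$ pull back to characters killing $A_{n'}$, and $E\subset E'$.
\item Since $\dim Z_{n'}=1$ by Theorem~\ref{tautness-criterion} applied at level $n'$, $E'$ is algebraic over $E$, and $L'=L$.
\end{enumerate}

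\textbf{The converse is not needed.} The converse inclusion you single out as the hardest step is unnecessary. It suffices that characters trivial on $A$ give units in $L$. Then the quotient in \eqref{eq:p2-relative-unit-rank} is a quotient of $X^*(T)/X^*(T/A)\cong X^*(A)$, which has rank $\dim A\le 1$. The converse does hold, but the reason is not nondegeneracy of $Y_1$. A unit algebraic over $E$ takes finitely many values on a general fiber of $\beta$, and the $\alpha$-image of that fiber is dense in a coset of $A$.
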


\begin{proof}
Since $D_n$ does not stabilize, Lemma~\ref{lem:unit-growth} gives
$\operatorname{rk}\bigl(\mathcal{O}(U_n)^*/\bC^*\bigr)\to\infty$. Similar as in Sect.~\ref{unit_map_sect}, we choose $n_0$ so that for every
$n\ge n_0$ the full unit map
\[
 \alpha_n:U_n\longrightarrow T_n=(\bC^*)^{\rho_n}
\]
has $\rho_n>2$.  Write
$Y_n=\overline{\alpha_n(U_n)}^{\rm Zar}$,
$A_n=\Stab(Y_n)^0$, and let $\beta_n:U_n\longrightarrow Z_n$ be the composition with the quotient by $A_n$. Clearly the stabilizer of $Z_n$ is finite.  Lemma~\ref{lem:unit-nondegenerate} gives $\dim Z_n>0$.  If $\dim Z_n=2$,
then $\beta_n$ is generically finite, which implies the tautness of $U$ by Theorem~\ref{tautness-criterion}. A contradiction. Hence $\dim Z_n=1$.  Since
$\dim Y_n\le2$ and the fibers of $Y_n\to Z_n$ are $A_n$-orbits, we also have
$\dim A_n\le1$.

Put $E=\beta_{n_0}^*\bC(Z_{n_0})$ and let $L$ be its relative algebraic closure
in $\bC(\bP^2)$.  For any $n\ge n_0$, if the map
$(\beta_{n_0},\beta_n):U_n\to Z_{n_0}\times Z_n$ had two-dimensional image,
it would be generically finite onto that image.  The stabilizer of the Zariski closure of the image is finite.  This again contradicts Theorem~\ref{tautness-criterion}.  Therefore the
composite of $E$ and $\beta_n^*\bC(Z_n)$ has transcendence degree one, and
relative algebraic closedness gives $\beta_n^*\bC(Z_n)\subset L$.

Applying the same argument to
$(\beta_{n_0},\beta_{n_0}\circ f)$ on $U_{n_0+1}$ gives $f^*E\subset L$.
If $a\in L$, then $f^*a$ is algebraic over $f^*E\subset L$ and belongs to
$\bC(\bP^2)$, so the relative algebraic closedness of $L$ gives
$f^*a\in L$.  Thus $f^*L\subset L$.

The invariant-field argument and the Chow-reduction construction in the proof of Proposition~\ref{prop:pcf-rational-fibration} therefore produces an irreducible equivariant pencil $\pi$. Its base is
rational because it is dominated by $\bP^2$. Proposition~\ref{prop:pcf-descent} shows that the induced $g$ is PCF.  Polarization descent gives
$\deg g=\deg f$.

Finally, we identify $\mathcal{O}(U_n)^*/\bC^*$ with the character lattice $X^*(T_n)$ of the torus $T_n$. Note that every character trivial on $A_n$ descends to $T_n/A_n$, and thus its pullback
belongs to $\beta_n^*\bC(Z_n)\subset L$. If we denote the quotient group $\left.\mathcal{O}(U_n)^*\middle/ (\mathcal{O}(U_n)^*\cap L^*)\right.$ as $\mathcal{Q}_1$ and denote the quotient group $\left.X^*(T_n) \middle/ X^*(T_n/A_n)\right.$ by $\mathcal{Q}_2$, then $\mathcal{Q}_2 \subset X^*(A_n)$ and one has the following diagram
\[
\begin{tikzcd}
0 \arrow[r] & X^*(T_n/A_n) \arrow[r] \arrow[d] 
  & X^*(T_n) \arrow[r] \arrow[d, "\cong"] 
  & \mathcal{Q}_2 \arrow[r] \arrow[d, two heads] 
  & 0 \\
0 \arrow[r] 
  & \left.(\mathcal{O}(U_n)^*\cap L^*)\middle/ \bC^*\right. \arrow[r] 
  & \left.\mathcal{O}(U_n)^*\middle/ \bC^*\right. \arrow[r] 
  & \mathcal{Q}_1 \arrow[r] 
  & 0.
\end{tikzcd}
\]
Therefore the quotient $\mathcal{Q}_1$ in
\eqref{eq:p2-relative-unit-rank} is a quotient of $\mathcal{Q}_2$, whose rank is at most $\dim A_n\le1$.
\end{proof}

We will also use the following classification result.

\begin{thm}[Dabija--Jonsson]\label{thm:DJ}
An endomorphism of $\bP^2$ of degree $D>1$ preserving an irreducible
pencil has, in suitable projective coordinates, one of two forms
\cite[Main Theorem]{DabijaJonsson2}:
\begin{enumerate}[label=\textup{(\roman*)}]
\item \emph{Elementary pencil:}
$\pi=[x:y]$ and $f=[P(x,y):Q(x,y):R(x,y,z)]$, with homogeneous
polynomials of degree $D$ and $g=[P:Q]$.
\item \emph{Binomial pencil:}
$\pi=[x^hy^{k-h}:z^k]$, where $0<h<k$ and $\gcd(h,k)=1$, and
\small
\[
 f[x:y:z]=[x^d:y^d:R]\quad\hbox{or}\quad f[x:y:z]=[y^d:x^d:R],\qquad
 R=z^{d-k\ell}\prod_{i=1}^{\ell}(z^k+c_i x^hy^{k-h}),
\]
\normalsize
where $0\le\ell\le d/k$ and $c_i\in\bC^*$.
\end{enumerate}
In the elementary case, the vertical ramification divisor satisfies
$R_f^{\mathcal P}=\pi^*R_g$, where $R_f^{\mathcal P}$ is the part of the ramification locus of $f$ supported on fibers of $\pi$
\cite[Lemma~1.2]{DabijaJonsson2}.
\end{thm}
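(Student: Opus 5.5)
This statement is a classification result quoted from Dabija--Jonsson, so the natural route is to reconstruct the argument of \cite{DabijaJonsson2} in outline. Let $\pi:\bP^2\dashrightarrow\bP^1$ be the invariant irreducible pencil with $\pi\circ f=g\circ\pi$. Write $\pi=[A:B]$, with $A,B$ coprime homogeneous of degree $k$ and generic member irreducible.

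\textbf{Step 1: pull back the pencil.} Applying $f^*$ to the relation gives
\[
[A\circ f:B\circ f]=[G_0(A,B):G_1(A,B)],
\]
where $g=[G_0:G_1]$. Comparing degrees shows $\deg g=\deg f=D$. Hence $A\circ f=\lambda G_0(A,B)$ and $B\circ f=\lambda G_1(A,B)$ with no extra common factor.

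\textbf{Step 2: use that $f$ is a morphism.} By Bertini-type arguments, the pencil has a single base point $p$ (after blow-ups) and is composite only with itself. The key claim is that $f^{-1}(p)=\{p\}$: a base point must map to a base point, since otherwise $f$ would be undefined. Then:
\begin{itemize}
\item If the generic member is a line, i.e. $k=1$, coordinates with $p=[0:0:1]$ give the elementary form (i).
\item If $k>1$, the generic member is a rational curve with a unibranch singularity. The standard argument on invariant pencils via multiplicity sequences at $p$, combined with total invariance under $f$ of the special fibers through $p$, forces the pencil to be binomial, $\pi=[x^hy^{k-h}:z^k]$. One shows that the lines $x=0$ and $y=0$ must be completely invariant or exchanged, which gives $f=[x^d:y^d:R]$ or $f=[y^d:x^d:R]$.
\end{itemize}
Writing $z^k\circ f$ as a polynomial in $x^hy^{k-h}$ and $z^k$ then yields the product form of $R$.

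\textbf{Step 3: the ramification identity in the elementary case.} Let $V$ be a fiber of $\pi$. Its ramification index along $f^{-1}(V)$ equals the local index of $g$ at $\pi(V)$, because $\pi$ is submersive at generic points of fibers. Hence $R_f^{\mathcal P}=\pi^*R_g$.

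The main obstacle is Step 2, namely excluding non-binomial pencils of degree $k>1$. This requires a delicate local analysis at the base point. Here I would rely directly on \cite[Main Theorem, Lemma~1.2]{DabijaJonsson2} rather than reprove it.
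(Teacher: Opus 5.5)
The paper gives no proof of this statement: it is quoted from \cite[Main Theorem, Lemma~1.2]{DabijaJonsson2}. So deferring the $k>1$ classification to that reference follows the same route, and your Steps~1 and~3 are sound. In Step~1, $A\circ f$ and $B\circ f$ are coprime because their common zeros form the finite set $f^{-1}(\text{base locus})$. In Step~3, $\pi$ is a submersion off $p$ with reduced fibers, which gives $R_f^{\mathcal P}=\pi^*R_g$. However, the heuristic you give for Step~2 is wrong, and it would derail an actual reconstruction.

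\textbf{The base locus.} A binomial pencil $\{x^hy^{k-h}=tz^k\}$ has two base points, $[1:0:0]$ and $[0:1:0]$ (the common zeros of $x^hy^{k-h}$ and $z^k$), not one, and Bertini gives nothing of the kind. The second normal form $f=[y^d:x^d:R]$ exchanges these two points. Hence in general there is neither a single base point nor a base point with $f^{-1}(p)=\{p\}$; that holds only for $f^2$. The general member also need not be singular: for $h=1$, $k=2$ it is the smooth conic $xy=tz^2$. Finally, the justification ``otherwise $f$ would be undefined'' is not an argument, since $f$ is a morphism everywhere.

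\textbf{What is true.} The fact you should use is total invariance of the whole base locus $B$. From $\pi\circ f=g\circ\pi$, by closure one gets $f(C_s)\subset C_{g(s)}$ and $f^{-1}(C_t)\subset\bigcup_{s\in g^{-1}(t)}C_s$ for all members. The first inclusion, with surjectivity of $g$, gives $f(B)\subset\bigcap_t C_t=B$. The second shows that a point $q\notin B$ with $f(q)\in B$ would lie on $f^{-1}(C_t)$ for every $t$, forcing $g(\pi(q))=t$ for all $t$, which is absurd. Hence $f^{-1}(B)=B$ and $f$ permutes the base points. Any reconstruction of the $k>1$ case must allow this permutation, and it is exactly the source of the two forms $[x^d:y^d:R]$ and $[y^d:x^d:R]$ in the statement.
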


\begin{proof}[Proof of Theorem~\ref{mainthm:P2}]
Assume $U_0=\bP^2\setminus P_f$ is not taut.  If the divisors $D_n$ stabilize as in
Corollary~\ref{cor:divisorial-tower}, we obtain a totally invariant divisor $D_\infty$ containing the
critical divisor. Using the same argument as in the proof of Theorem~\ref{mainthm:Projective-space-pcf}, one can show that, after a projective change of
coordinates, $f:\,[x:y:z] \mapsto [c_0x^d:c_1y^d:c_2z^d]$, giving the Case~\textup{(1)}.

Assume henceforth that $D_n$ does not stabilize and choose the pencil of
Lemma~\ref{lem:p2-unit-pencil}.  We may replace $f$ by positive iterates and continue to denote the resulting map by $f$, with degree denoted by $d$. Note that $P_{f^a}=P_f$, and the new boundary tower is some subsequence $D_{an}$, so \eqref{eq:p2-relative-unit-rank} persists.

Consider the PCF base map $g:\, \bP^1 \to \bP^1$ with $\#P_g\ge2$. If $P_g=\{a,b\}$, then one can use the Riemann--Hurwitz formula and the forward invariance of
$P_g$ to show that $g^{-1}(P_g)=P_g$.\\[.1cm]

\paragraph{\emph{Binomial case.}}
After squaring if necessary, we use the form $f:\,[x:y:z] \mapsto [x^d:y^d:R]$ and put
\[
 t=\frac{x^hy^{k-h}}{z^k},
 \qquad
 H(t)=\prod_i(1+c_it).
\]
Then $R=z^dH(t)$ and $g(t)=t^d/H(t)^k$. Since $\operatorname{gcd}(h,k-h)=1$, one can choose integers $a,b$ with $hb-(k-h)a=1$ and set $u=(x/z)^a(y/z)^b$. Then the coprime condition guarantees that $(t,u)$ give the coordinates of a 2-dimensional torus. Direct computation gives
\begin{equation}\label{f_toric_coord}
 f:\,(t,u) \mapsto \bigl(g(t), u^d \cdot H(t)^{-a-b}\bigr).
\end{equation}
If $\#P_g\ge3$, let $S=\bP^1\times\bP^1$ with coordinates $(t,u)$,
let $\rho$ be the first projection, and let
$\tau:S\dashrightarrow\bP^2$ be the birational map induced by the inverse of the torus coordinates.  The formula \eqref{f_toric_coord} defines a rational
self-map $G$ of $S$ and gives $\tau\circ G=f\circ\tau$ and $\rho\circ G=g\circ\rho$.
Thus Case~\textup{(2)} holds.

If $\#P_g=2$, then $0$ is a totally ramified fixed point of $g$ and the other postcritical point is $\infty$. We will show that $H(t) \equiv 1$. Otherwise, $H$ is nonconstant,  and every zero of $H$ is a pole of local degree at least $k\ge2$. Note that complete invariance of
$\{0,\infty\}$ would make $\infty$ totally invariant, contradicting the
finite pole.  Hence $H \equiv 1$ and, after at most one further squaring,
$f=[x^d:y^d:z^d]$, which is Case~\textup{(1)}.\\[.1cm]

\paragraph{\emph{Elementary case.}}
Write $\pi[x:y:z]=[x:y]$ and $f[x:y:z]=[P(x,y):Q(x,y):R(x,y,z)]$.
Clearly, the base point $p=[0:0:1]$ is totally invariant.  Note that the vertical components of $D_n$ are exactly the pencil
lines above $g^{-n}(P_g)$, while inverse images of horizontal components are
horizontal.  Fix a vertical line $L_0=\{l=0\}\subset P_f$.  If
$C=\{F_C=0\}$ is any horizontal irreducible component of some $D_n$, then
\[
 u_C=\frac{F_C}{l^{\deg C}}
\]
is a unit on $U_n$.  Two distinct horizontal components would give two such
units independent modulo $L^*=\pi^*\bC(\bP^1)^*$, and thus contradict to \eqref{eq:p2-relative-unit-rank} after passing to a common level $n$.  Hence the entire boundary tower
contains at most one horizontal irreducible component.

Blow up the base point,
\[
\tau:S:=\operatorname{Bl}_p\bP^2\longrightarrow\bP^2,
 \qquad
 \rho:S\to\bP^1,
\]
and let $G:S\dashrightarrow S$ be the rational lift of $f$.  The exceptional
divisor is a section of $\rho$ and is totally invariant. For large $n$ and a
general fiber $S_t$ of $\rho$, both $S_t\setminus \tau^*D_{n+1}$ and
$S_{g(t)}\setminus \tau^*D_n$ are projective lines with the same number $r$ of
punctures.
The restriction of $G$ to a general fiber $S_t$ has degree $d$, since
$R(0,0,1)\ne0$.  By Lemma~\ref{lem:etale-tower}, it is an unbranched covering
between these punctured projective lines (take $t\notin\Crit(g)$).
Hence $2-r=d(2-r)$, which implies $r=2$. Besides the exceptional section there is therefore
one horizontal component, which meets a general fiber once. After shrinking
the base it is a second section.

Now we choose a rational fiber coordinate $u$ sending the horizontal section to $0$
and the exceptional section to $\infty$.  Both sections are totally invariant
on the generic fiber.  Thus the degree-$d$ map on the generic $\bP^1$ has
$0$ and $\infty$ as its only preimages of $0$ and $\infty$, respectively.
Consequently, we can write $G(t,u)=\bigl(g(t),a(t)u^D\bigr)$ for some $a(t)\in\bC(t)^*$. This gives the Case~\textup{(2)}.
\end{proof}

\subsection{An example}

In this section, we study the following explicit endomorphism
\begin{equation}\label{eq:cheb-F}
 F[X:Y:Z]=[X^2-2YZ:Y^2-2XZ:Z^2]
\end{equation}
on $\bP^2$.
\begin{prop}\label{hyper-emb-counterexample}
The following properties hold for $F$:
\begin{enumerate}
    \item $F$ is PCF;
    \item $F$ as well as any iterate of $F$ does not preserve a rational pencil;
    \item The PCF complement $U_F:= \bP^2 \setminus P_F$ is taut;
    \item However, $U_F$ is \emph{not hyperbolically embedded into $\bP^2$}.
\end{enumerate}
\end{prop}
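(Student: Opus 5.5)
The plan is to exploit the fact that $F$ is the quotient of the squaring map on a two-dimensional torus by the Weyl group $\mathfrak S_3$ of type $A_2$. Let $T=\{(u,v,w)\in(\bC^*)^3 : uvw=1\}\simeq(\bC^*)^2$, with $\mathfrak S_3$ permuting coordinates, and let
\[
\pi:T\to\bC^2=\{Z\neq0\},\qquad \pi(u,v,w)=(u+v+w,\ uv+vw+wu).
\]
A direct check gives $u^2+v^2+w^2=x^2-2y$ and $\sum(uv)^2=y^2-2x\,uvw=y^2-2x$. Hence $F\circ\pi=\pi\circ L$ with $L(u,v,w)=(u^2,v^2,w^2)$, and $\pi$ is the quotient map.

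\textbf{(1) PCF.} I would copy the argument of Section~\ref{sec:lattes-maps}, with $A$ replaced by $T$, which is non-compact. The map $L$ is étale on $T$, so $F^n(\Crit F\cap\bC^2)\subset B_\pi$ by \eqref{eq:lattes-differential}. Here $B_\pi$ is the discriminant curve $\Delta$, the image of the reflection loci $\{u=v\},\{v=w\},\{u=w\}$; it is a deltoid-type cubic. On the line $L_\infty=\{Z=0\}$ the map is $[X:Y]\mapsto[X^2:Y^2]$, so $L_\infty$ is invariant and is itself critical, since $\det dF$ contains the factor $Z$. Its critical points $[1:0:0],[0:1:0]$ are fixed. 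Therefore
\[
P_F=\overline{\Delta}\cup L_\infty ,
\]
which is an algebraic set of degree $4>3$.

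\textbf{(2) No invariant pencil for any $F^n$.} I would use Theorem~\ref{thm:DJ} applied to $F^n$.
\begin{itemize}
\item Elementary case. The base point must be totally invariant. An affine point cannot be totally invariant: its $L^n$-preimages in $T$ are $4^n$ points, so its $F^n$-preimages number at least $4^n/6$. On $L_\infty$, solving the equations shows that $[1:0:0]$ and $[0:1:0]$ are totally invariant. So I would check that the pencil of lines through each of them is not preserved. For instance, the lines $y=c$ through $[1:0:0]$ are sent to curves on which $y^2-2x$ is nonconstant. The same check works for iterates, by looking at leading terms.
\item Binomial case. This requires two distinct lines, $x=0$ and $y=0$ in adapted coordinates, forming a union that is totally invariant under $F^n$. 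A totally invariant curve of $F^n$ has a preimage in $T\cup(\text{boundary})$ that is $L^n$-totally invariant. In $T$ such curves are only finite unions of subtori and their translates, and their images are not lines. At infinity the only invariant line is $L_\infty$. So there is no second line, and this case is excluded.
\end{itemize}

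\textbf{(3) Tautness.} I would apply Theorem~\ref{tautness-criterion}. $F$ is $2$-polarized and PCF on $\bP^2$. The étale cover
\[
T':=T\setminus\{u=v,\ uv^2=1,\ u^2v=1\}\ \longrightarrow\ U_F
\]
(writing $w=1/(uv)$) is the natural source of units. Pulling back the tower, the boundary $F^{-M}(P_F)$ contains images of the curves $L^{-M}$(reflection loci). These provide units on $U_M$ such as symmetric functions built from $u^{2^{-M}}-\zeta v^{2^{-M}}$, or simply the norms of the defining equations of the new components. I expect that for $M$ large the unit map $\Theta:U_M\to W$ is generically finite: the units on $T'_M$ include $u,v$ up to roots, so they already separate points generically. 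I also expect $W$ to have finite stabilizer, because a positive-dimensional subtorus acting on $W$ would give an $L$-compatible torus fibration of $T'$ preserving all translated reflection curves, which is impossible since these curves have three different slopes. Then Theorem~\ref{tautness-criterion} gives tautness.

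\textbf{(4) Not hyperbolically embedded.} I would use the Kiernan--Green criterion: if $U_F$ were hyperbolically embedded in $\bP^2$, no nonconstant entire curve could arise as a Brody-type limit of discs in $U_F$. $L_\infty$ meets $\overline\Delta$ in at most two points, namely the points $[1:\omega:0]$ where the deltoid is tangent. So $L_\infty$ minus those points carries a $\bC^*$ and hence nonconstant entire curves. I would realize such a curve as a limit of
\[
\zeta\mapsto\pi\bigl(\lambda e^{a\zeta},\ \lambda e^{b\zeta}\bigr),\qquad \lambda\to\infty ,
\]
with generic $a,b$, so that the curves avoid the reflection loci on bounded discs. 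In homogeneous coordinates the first two entries dominate, and the normalized maps converge to a nonconstant map into $L_\infty$.

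\textbf{Main obstacle.} I expect the hardest steps to be (3), specifically verifying generic finiteness and finiteness of the stabilizer of the unit map, and making the limit in (4) precise. For (3), if it stalls, I would first try the fallback of proving directly that $T'$ is taut via Theorem~\ref{thm:pkob} and descending through the finite étale cover.
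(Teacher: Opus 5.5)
Your torus model is the right one: your $\pi$ is the paper's $\Pi$, since $uv+vw+wu=u^{-1}+v^{-1}+w^{-1}$ on $T$. Part (1) is fine. The elementary case of (2) is also a legitimate alternative to the paper, which instead uses $R_h^{\mathcal P}=\pi^*R_g$ to find two line components in $P_h$. Your bound $4^n/6>1$ needs $n\ge2$, which is harmless after replacing $n$ by a multiple. One small slip: $\overline\Delta$ is the quartic $X^2Y^2-4X^3Z-4Y^3Z+18XYZ^2-27Z^4=0$, so $\deg P_F=5$. It meets $L_\infty$ only at $[1:0:0]$ and $[0:1:0]$, tangentially, not at $[1:\omega:0]$. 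The real problems are in (4), (3) and the binomial case of (2).

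\textbf{Part (4) fails as written.} Take $(u,v,w)=(\lambda e^{a\zeta},\lambda e^{b\zeta},\lambda^{-2}e^{-(a+b)\zeta})$.
\begin{itemize}
\item At $\zeta=0$ you have $u=v$, so these discs meet $\Delta\subset P_F$ and are not maps into $U_F$.
\item Moreover $x\sim\lambda$ while $y=uv+vw+wu\sim\lambda^2e^{(a+b)\zeta}$, so $[x:y:1]\to[0:1:0]$, which is constant.
\end{itemize}
You need the three coordinates to have separated sizes, with the middle one bounded. For example take $(u,v,w)=(\lambda,e^{b\zeta},\lambda^{-1}e^{-b\zeta})$ with $b\neq0$. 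For $\zeta\in\bD_A$ and $\lambda>e^{2|b|A}$ the coordinates are pairwise distinct, so the discs lie in $U_F$. Also $[x:y:1]=[1+O(\lambda^{-1}):e^{b\zeta}+O(\lambda^{-1}):\lambda^{-1}]\to[1:e^{b\zeta}:0]$. Rescaling these discs to $\bD$ gives maps into $U_F$ whose centers converge in $\bP^2$ and whose derivatives tend to infinity. This contradicts the bound $F_{U_F}\ge c\|\cdot\|$ that a hyperbolic embedding into compact $\bP^2$ would force. With this correction, your Brody-limit argument is a valid alternative to the paper's explicit estimate of $d_{U_F}(\Pi(a_n),\Pi(b_n))$ inside the product of punctured discs $Q$.

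\textbf{Part (2), binomial case.} You claim that $L^n$-totally invariant curves in $T$ are translates of subtori whose images are not lines. This is unjustified, and the claim about images is false: $\pi(\{u=1\})=\{x=y\}$ is a line. What is true is that there are no such curves at all. If $L^{-n}(\tilde C)=\tilde C$, then $\tilde C$ is stable under translation by every $2^{nk}$-torsion point. That set is Zariski dense, so $\tilde C$ would be $T$-stable, which is impossible for a curve. Alternatively, use the paper's argument. A totally invariant line $\ell$ of $h^2$ satisfies $(h^2)^*\ell=d^2\ell$, so it is an invariant ramification component. Hence it lies in $P_{h^2}=P_F$, whose only line is $L_\infty$.

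\textbf{Part (3), main line.} This is not a proof, for two reasons.
\begin{itemize}
\item The functions $u,v$ are units on the cover $\pi^{-1}(U_M)$, not on $U_M$. Only $S_3$-invariant units descend, so generic finiteness of the unit map of $U_M$ is not established.
\item The three-slopes heuristic does not give a finite stabilizer. Already the unit map $(\delta,xy-1)$ of $U_1$, with $\delta$ the discriminant, is dominant onto $(\bC^*)^2$.
\end{itemize}
The paper avoids this altogether. Hyperbolicity of $U_F$ excludes the monomial case, and (2) excludes the fibration case, since a one-dimensional normal rationally connected base is $\bP^1$. Theorem~\ref{mainthm:Projective-space-pcf} then gives tautness.

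Your fallback does work, and it is more direct.
\begin{itemize}
\item The units $u,v,u-v,uv^2-1,u^2v-1$ embed $T^\circ$ as a closed subvariety of $(\bC^*)^5$.
\item $T^\circ$ contains no translate of a positive-dimensional subtorus, because it is Brody hyperbolic (it is finite \'etale over $W\subset\Omega^2$). So Yamanoi's exceptional set meets $T^\circ$ in $\operatorname{Sp}=\varnothing$.
\item Generalized Hurwitz upgrades taut embedding modulo $S$ to tautness of $T^\circ$. This uses that the boundary of $T^\circ$ in its closure is the support of a Cartier divisor.
\item Tautness descends along the proper covering $T^\circ\to U_F$ by lifting discs.
\end{itemize}
This route is independent of (2) and of Theorem~\ref{mainthm:Projective-space-pcf}.
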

The rest of this section is devoted to the proof of Proposition~\ref{hyper-emb-counterexample}.\\[.1cm]

\paragraph{\emph{$F$ is PCF}}
By computing the determinant of the homogeneous Jacobian of $F$, $\det J_F=8Z(XY-Z^2)$,
we obtain that
\[
 \Crit(F)=H_\infty\cup C,
 \qquad H_\infty=\{Z=0\},\qquad C=\{XY=Z^2\}.
\]
Restrict $F$ to the affine char $\{Z=1\}$. Denote by $x:=X/Z$ and $y:=Y/Z$ the affine coordinates. Then $F(x,y)=(x^2-2y,y^2-2x)$ and $C=\{xy=1\}$.

Now we consider a subtorus $T=\{(t_1,t_2,t_3)\in(\bC^*)^3:t_1t_2t_3=1\}\simeq(\bC^*)^2$ of $(\bC^*)^3$. Consider the map
\[
 \Pi:\, T \to \bC^2, \qquad  (t_1,t_2,t_3) \mapsto
 \left(t_1+t_2+t_3,\;t_1^{-1}+t_2^{-1}+t_3^{-1}\right).
\]
Since $t_1t_2t_3=1$, the two components of $\Pi$ are the first two elementary symmetric functions. Thus $\Pi:T\to\bC^2$ is induced by the finite quotient of $T$ by $S_3$.

Now we consider a self-map $M_2$ on the torus $T$
\[
M_2(t_1,t_2,t_3):=(t_1^2,t_2^2,t_3^2).
\]

Note that $M_2$ is \emph{semiconjugate with $F$ via $\Pi$}: By direct computation one obtains that $\sum_i t_i^2=x^2-2y$ and $\sum_i t_i^{-2}=y^2-2x$. Hence we have
\small
\[
\Pi \circ M_2(t_1,t_2,t_3) = \left( \sum_i t_i^2, \sum_i t_i^{-2}\right) = (x^2-2y, y^2-2x) = F(x,y) = F \circ \Pi(t_1,t_2,t_3).
\]
\normalsize
Let $D\subset\bP^2$ be the projective completion of the image curve of $\{t_1=t_2\}\subset T$ under the map $\Pi$.  The affine parametrization
$(x,y)=(s,s^{-1})$ of $C$ gives
\[
 F(s,s^{-1})=(s^2-2s^{-1},s^{-2}-2s)
            =\Pi(t,t,t^{-2}),\quad \textrm{with}\,\,  t=-s^{-1}.
\]
Hence $F(C)=D$. One can also check that $F(D)=D$: Since $M_2(t,t,t^{-2}) = (t^2,t^2,t^{-4})$, from the semiconjugacy between $M_2$ and $F$ we can obtain that $F(D)=D$. Clearly $F(H_\infty)=H_\infty$.
Therefore
\begin{equation}\label{eq:PF-counter}
 P_F=D\cup H_\infty.
\end{equation}
In particular $F$ is PCF.\\[.1cm]

\paragraph{\emph{$U_F$ is hyperbolic}}
Now we study the PCF complement $U_F:= \bP^2 \setminus P_F$. Consider a Zariski open of $T$:
\[
 T^\circ:=\{(t_1,t_2,t_3)\in T:t_i\ne t_j\text{ for }i\ne j\}.
\]
The $S_3$-action is free on $T^\circ$, and its quotient is the complement of the
 discriminant. Thus
\[
 \Pi:T^\circ\longrightarrow U_F:=\bP^2\setminus P_F=\bC^2\setminus D
\]
is a finite etale covering.

There is also a convenient projective quotient picture.  Projectivization gives
\[
 q:T^\circ\longrightarrow \bP^2,
 \qquad
 q(t_1,t_2,t_3)=[t_1:t_2:t_3].
\]
Its image is the complement of the six-line arrangement
\[
 \mathcal A=\{XYZ(X-Y)(Y-Z)(Z-X)=0\}\subset\bP^2,
\]
namely the classical Ceva arrangement, equivalently the complete quadrilateral
 determined by the four triple points $[1:0:0]$, $[0:1:0]$, $[0:0:1]$ and $[1:1:1]$. See the following Figure~\ref{fig:ceva-arrangement} for this line arrangement.

\begin{figure}[t]
\centering
\begin{tikzpicture}[x=0.013cm,y=-0.013cm,
                    line width=0.5pt,line cap=round]
  \coordinate (A) at (437.08,93.42);
  \coordinate (B) at (224.27,330.54);
  \coordinate (C) at (697.81,379.75);
  \coordinate (O) at (407.82,220.22);

  \draw ($(B)!-0.275289!(A)$) -- ($(B)!1.074053!(A)$);
  \draw ($(A)!-0.128971!(C)$) -- ($(A)!1.137875!(C)$);
  \draw ($(B)!-0.218029!(C)$) -- ($(B)!1.048943!(C)$);

  \draw ($(A)!-0.549003!(O)$) -- ($(A)!2.330767!(O)$);
  \draw ($(B)!-0.153535!(O)$) -- ($(B)!2.060755!(O)$);
  \draw ($(O)!-0.412182!(C)$) -- ($(O)!1.282300!(C)$);
\end{tikzpicture}
\caption{$W$ as complement of line arrangement}
\label{fig:ceva-arrangement}
\end{figure}

On the affine chart $Z=1$ with coordinates $r$, $s$, we obtain the domain
\[
 W:=\{(r,s)\in(\bC^*)^2:r\ne1,\ s\ne1,\ r\ne s\},
\]
and the map $q$ is written there as
\[
 \rho:T^\circ\longrightarrow W,
 \qquad
 \rho(t)=\left(\frac{t_1}{t_3},\frac{t_2}{t_3}\right).
\]
For fixed $(r,s)$ one has $t_3^3=(rs)^{-1}$, so $\rho$ is a three-sheeted
unramified covering. Denote $\Omega=\bC^*\setminus\{1\}\simeq\bP^1\setminus\{0,1,\infty\}$. Then $W\subset\Omega^2$, and hence $W$ is Kobayashi hyperbolic. Since hyperbolicity
is preserved under finite etale coverings,
we know that $T^\circ$ and $U_F$ are also Kobayashi hyperbolic.\\[.1cm]

\paragraph{\emph{$F$ does not preserve a rational pencil}}
Now we consider the iterate $h=F^m$, of degree $d \ge 2^m$ ($m \ge 1$). Note that iteration does not change the
postcritical set: set-theoretically one has $\Crit(F^m)=\bigcup_{i=0}^{m-1}F^{-i}(\Crit F)$,
and therefore
\[
 P_{F^m}=\bigcup_{j\ge1}\bigcup_{i=0}^{m-1}
 F^{mj-i}(\Crit F)=P_F.
\]
Thus
\begin{equation}\label{eq:PF-iterate}
 P_h=P_F=D\cup H_\infty.
\end{equation}

Suppose that $h$ preserves some rational pencil. Similar as we did in Sect.~\ref{Sect_classify_P2}, one takes the relative
algebraic closure of the factor field and obtains a one-dimensional $h$-invariant
function field.  Its normalized base is rational (it is dominated by $\bP^2$),
so it defines an $h$-invariant irreducible pencil
$\pi:\bP^2\dashrightarrow\bP^1$.  By
Theorem~\ref{thm:DJ}, the pencil is either elementary or binomial.

Assume first that $\pi$ is elementary.  In coordinates with
$\pi[x:y:z]=[x:y]$, the Dabija--Jonsson ramification identity is $R_h^{\mathcal P}=\pi^*R_g$, where $g:\bP^1\to\bP^1$ is the induced degree-$d$ map.  Then $g$ is PCF by Proposition~\ref{prop:pcf-descent}.  Moreover $\#P_g\ge2$ by the Riemann--Hurwitz formula.  For every $q\in P_g$ choose a critical point
$c$ of $g$ and $j\ge1$ with $q=g^j(c)$.  The fiber $L_c=\pi^{-1}(c)$ is a
critical line of $h$, and $h^j(L_c)=L_q$.  Hence every $L_q$ is a line
component of $P_h$.  Thus $P_h$ has at least two distinct line components,
contradicting \eqref{eq:PF-iterate}, whose unique line component is
$H_\infty$.

Assume next that the pencil is binomial.  In the Dabija--Jonsson normal form,
the reducible member $x^hy^{k-h}=0$ consists of two distinct lines $L_1,L_2$,
and $h$ fixes or interchanges them.  Consequently both lines are totally
invariant under $h^2$.  Since $(h^2)^*L_i$ has degree $d^2$ and is supported on
$L_i$, one has
\[
 (h^2)^*L_i=d^2L_i.
\]
Thus $L_i$ is a ramification component of $h^2$. Since it is also invariant,
$L_i\subset P_{h^2}$.  But $P_{h^2}=P_F$ by \eqref{eq:PF-iterate}, so $P_F$
would again have two distinct line components, a contradiction. Hence no positive iterate of $F$ preserves a rational pencil.

Recall that $U_F$ is Kobayashi hyperbolic. Thus $F$ cannot be a monomial power map. By Theorem~\ref{mainthm:Projective-space-pcf} we conclude that $U_F$ is \emph{taut}.\\[.1cm]

\paragraph{\emph{$U_F$ is not hyperbolically embedded into $\bP^2$}}
Now we construct two sequences of points on $T^\circ$, $\{a_n\}$ and $\{b_n\}$, and map them to sequences of points on $U_F$ via $\Pi$. Then we will show that the Kobayashi distance between $\Pi(a_n)$ and $\Pi b_n$ decreases to zero as $n \to \infty$.

Let $R_n=e^{n^2}$, $\epsilon_n=\frac1n$. We put $a_n=(R_n^{1+\epsilon_n},R_n^{-1},R_n^{-\epsilon_n})$ and $b_n=(R_n,R_n^{-1-\epsilon_n},R_n^{\epsilon_n})$. Then $a_n,b_n \in T^\circ$, and it is easy to check that
\[
 \Pi(a_n)\longrightarrow A=[1:0:0],\qquad
 \Pi(b_n)\longrightarrow B=[0:1:0].
\]
On the other hand, one has $\rho(a_n)=\bigl(R_n^{1+2\epsilon_n},R_n^{-1+\epsilon_n}\bigr)$, $\rho(b_n)=\bigl(R_n^{1-\epsilon_n},R_n^{-1-2\epsilon_n}\bigr)$.
Both points eventually lie in
\[
 Q=Q_1\times Q_2:=\{|r|>2\}\times\{0<|s|<1/2\}\subset W.
\]
Each $Q_i$ is biholomorphic to a punctured disc: $Q_1\simeq\Delta^*$ via
$r\mapsto 2/r$, and $Q_2\simeq\Delta^*$ via $s\mapsto 2s$. Hence, we can write the
Kobayashi--Royden metric as
\[
 F_{Q_1}(r;\xi)=\frac{|\xi|}{2|r|\log(|r|/2)},
 \qquad
 F_{Q_2}(s;\eta)=\frac{|\eta|}{2|s|\log(1/(2|s|))}.
\]
We choose a special path from $\rho(a_n)$ to $\rho(b_n)$: Join $\rho(a_n)$ to $\rho(b_n)$ inside $Q$ by first varying the $r$-coordinate
along the positive real axis and then varying the $s$-coordinate. 
The first
segment has Kobayashi length
\[
 L_n^{(r)}=
 \int_{R_n^{1-\epsilon_n}}^{R_n^{1+2\epsilon_n}}
 \frac{dr}{2r\log(r/2)}
 =\frac12\log
 \frac{(1+2\epsilon_n)\log R_n-\log2}
      {(1-\epsilon_n)\log R_n-\log2},
\]
which tends to $0$ because $\epsilon_n\to0$.  Similarly the second segment has
length
\[
 L_n^{(s)}=
 \int_{R_n^{-1-2\epsilon_n}}^{R_n^{-1+\epsilon_n}}
 \frac{ds}{2s\log(1/(2s))}
 =\frac12\log
 \frac{(1+2\epsilon_n)\log R_n-\log2}
      {(1-\epsilon_n)\log R_n-\log2}
 \longrightarrow0.
\]
Thus we have found a path $\gamma_n\subset Q$, from $\rho(a_n)$ to $\rho(b_n)$,
whose Kobayashi length tends to $0$ as $n \to \infty$.

Now we lift $\gamma_n$ to a path on $T^\circ$, connecting $a_n$ and $b_n$. Recall that $\rho:\, T^\circ \to W \subset (\bC^*)^2$ is a three-sheeted
unramified covering, which is locally defined by $t_3^3=(rs)^{-1}$. So we choose the positive real branch $t_3=(rs)^{-1/3}$, starting at $a_n$.  At the endpoint this
gives $t_3=R_n^{\epsilon_n}$ and hence exactly the point $b_n$. By the definition of the Kobayashi--Royden metric, we have
\[
 d_{T^\circ}(a_n,b_n)\le \ell_W(\gamma_n)\longrightarrow 0.
\]
Then the distance-decreasing property of $\Pi$ yields $d_{U_F}(\Pi(a_n),\Pi(b_n))\longrightarrow 0$.

However, the limits $A\ne B$ are distinct points of the boundary $H_\infty\subset P_F$.
Hence the Kobayashi distance on $U_F$ does not separate these two
boundary points, and therefore $U_F\hookrightarrow\bP^2$ is not a hyperbolic embedding.

\section*{Acknowledgement}
Our collaboration began during the participation of the second author to the conference "birational geometry and moduli spaces" organised by the first author in January 2026 at Xiamen University, though the work presented in this paper was mostly done later. We thank Xiamen University and other organisers for hospitality and organisation.

R.-R. Sun is supported by an NSFC grant (No. 12501059) and a grant from Xiamen University (No. X2450215). S.-Y. Zhao acknowledges partial support from the French National Research Agency under the projects GAG (ANR-24-CE40-3526-01) and DynAtrois (ANR-24-CE40-1163).
\section*{AI usage}
We developed the main framework of the materials presented in this paper, and used LLM models to draft the paper.

\bibliographystyle{amsplain}
\bibliography{biblio}

@article{KiernanKobayashi1973,
 author = {Kiernan, Peter and Kobayashi, Shoshichi},
 title = {Holomorphic mappings into projective space with lacunary hyperplanes},
 journal = {Nagoya Math. J.},
 volume = {50},
 pages = {199--216},
 year = {1973},
 doi = {10.1017/S0027763000015646}
}

@book{Lazarsfeld2004,
  author    = {Robert Lazarsfeld},
  title     = {Positivity in Algebraic Geometry I:
               Classical Setting: Line Bundles and Linear Series},
  series    = {Ergebnisse der Mathematik und ihrer Grenzgebiete. 3. Folge},
  volume    = {48},
  publisher = {Springer-Verlag},
  address   = {Berlin},
  year      = {2004}
}

@article{Bloch1926,
 author = {Bloch, Andr{\'e}},
 title = {Sur les syst{\`e}mes de fonctions uniformes satisfaisant {\`a}
          l'{\'e}quation d'une vari{\'e}t{\'e} alg{\'e}brique dont
          l'irr{\'e}gularit{\'e} d{\'e}passe la dimension},
 journal = {J. Math. Pures Appl. (9)},
 volume = {5},
 pages = {19--66},
 year = {1926}
}

@article{Ochiai1977,
 author = {Ochiai, Takushiro},
 title = {On holomorphic curves in algebraic varieties with ample irregularity},
 journal = {Invent. Math.},
 volume = {43},
 pages = {83--96},
 year = {1977}
}

@article{Brody1978,
 author = {Brody, Robert},
 title = {Compact manifolds and hyperbolicity},
 journal = {Trans. Amer. Math. Soc.},
 volume = {235},
 pages = {213--219},
 year = {1978},
 doi = {10.1090/S0002-9947-1978-0470252-3}
}

@article {FujinoQuasiAlbCorrection,
    AUTHOR = {Fujino, Osamu},
     TITLE = {Correction: {O}n quasi-{A}lbanese maps},
   JOURNAL = {Boll. Unione Mat. Ital.},
  FJOURNAL = {Bollettino dell'Unione Matematica Italiana},
    VOLUME = {18},
      YEAR = {2025},
    NUMBER = {4},
     PAGES = {1069--1071},
      ISSN = {1972-6724,2198-2759},
   MRCLASS = {14E05 (14E30 14L40 32M05)},
  MRNUMBER = {4985408},
       DOI = {10.1007/s40574-025-00481-8},
       URL = {https://doi-org.proxy3.library.mcgill.ca/10.1007/s40574-025-00481-8},
}

@article {FujinoQuasiAlb,
    AUTHOR = {Fujino, Osamu},
     TITLE = {On quasi-{A}lbanese maps},
   JOURNAL = {Boll. Unione Mat. Ital.},
  FJOURNAL = {Bollettino dell'Unione Matematica Italiana},
    VOLUME = {18},
      YEAR = {2025},
    NUMBER = {4},
     PAGES = {1027--1067},
      ISSN = {1972-6724,2198-2759},
   MRCLASS = {14E05 (14E30 14L40 32M05)},
  MRNUMBER = {4985407},
       DOI = {10.1007/s40574-024-00448-1},
       URL = {https://doi-org.proxy3.library.mcgill.ca/10.1007/s40574-024-00448-1},
}

@article{Abramovich1994,
 author = {Abramovich, Dan},
 title = {Subvarieties of semiabelian varieties},
 journal = {Compos. Math.},
 volume = {90},
 number = {1},
 pages = {37--52},
 year = {1994}
}

@article{Astorg,
 author = {Astorg, Matthieu},
 title = {Dynamics of post-critically finite maps in higher dimension},
 journal = {Ergodic Theory Dyn. Syst.},
 volume = {40},
 number = {2},
 pages = {289--308},
 year = {2020}
}

@article{Crass2007Klein,
  author        = {Crass, Scott},
  title         = {Solving the heptic by iteration in two dimensions: Geometry and dynamics under {Klein}'s group of order 168},
  journal       = {Journal of Modern Dynamics},
  volume        = {1},
  number        = {2},
  pages         = {175--203},
  year          = {2007}
}

@article{Crass2011Valentiner,
  author        = {Crass, Scott},
  title         = {New light on solving the sextic by iteration: An algorithm using reliable dynamics},
  journal       = {Journal of Modern Dynamics},
  volume        = {5},
  number        = {2},
  pages         = {397--408},
  year          = {2011}
}

@article{DabijaJonsson,
 author = {Dabija, Marius and Jonsson, Mattias},
 title = {Algebraic webs invariant under endomorphisms},
 journal = {Publ. Mat., Barc.},
 issn = {0214-1493},
 volume = {54},
 number = {1},
 pages = {137--148},
 year = {2010}
}

@article {DabijaJonsson2,
    AUTHOR = {Dabija, Marius and Jonsson, Mattias},
     TITLE = {Endomorphisms of the plane preserving a pencil of curves},
   JOURNAL = {Internat. J. Math.},
  FJOURNAL = {International Journal of Mathematics},
    VOLUME = {19},
      YEAR = {2008},
    NUMBER = {2},
     PAGES = {217--221},
      ISSN = {0129-167X,1793-6519},
   MRCLASS = {32H50 (37F10)},
  MRNUMBER = {2384901},
MRREVIEWER = {Charles\ Favre},
       DOI = {10.1142/S0129167X08004637},
       URL = {https://doi-org.proxy3.library.mcgill.ca/10.1142/S0129167X08004637},
}

@book{DebarreBook,
 author = {Debarre, Olivier},
 title = {Higher-dimensional algebraic geometry},
 series = {Universitext},
 issn = {0172-5939},
 isbn = {0-387-95227-6},
 year = {2001},
 publisher = {New York, NY: Springer}
}

@article{DKMpcfset,
 author = {DeMarco, Laura G. and Koch, Sarah C. and McMullen, Curtis T.},
 title = {On the postcritical set of a rational map},
 journal = {Math. Ann.},
 issn = {0025-5831},
 volume = {377},
 number = {1-2},
 pages = {1--18},
 year = {2020}
}

@incollection{FornaessSibonyCF,
 author = {Forn{\ae}ss, John Erik and Sibony, Nessim},
 title = {Critically finite rational maps on {{\({\mathbb{P}{}}^ 2\)}}},
 booktitle = {The Madison symposium on complex analysis. Proceedings of the symposium, held June 2-7, 1991, at the University of Wisconsin-Madison, Madison, WI, USA},
 pages = {245--260},
 year = {1992},
 publisher = {Providence, RI: American Mathematical Society}
}

@incollection{FornaessSibonyKobayashi,
 author = {Fornaess, John Erik and Sibony, Nessim},
 title = {Complex dynamics in higher dimension. {I}},
 booktitle = {Complex analytic methods in dynamical systems. Proceedings of the congress held at Instituto de Matem\'atica Pura e Aplicada, IMPA, Rio de Janeiro, Brazil, January 1992},
 pages = {201--231},
 year = {1994},
 publisher = {Paris: Soci{\'e}t{\'e} Math{\'e}matique de France}
}

@article{GTV,
 author = {Gauthier, Thomas and Taflin, Johan and Vigny, Gabriel},
 title = {Sparsity of postcritically finite maps of {{\(\mathbb{P}^k\)}} and beyond: a complex analytic approach},
 journal = {Publ. Math., Inst. Hautes {\'E}tud. Sci.},
 volume = {143},
 pages = {1--96},
 year = {2026}
}

@article{EGAIV,
  author  = {Alexander Grothendieck},
  title   = {Éléments de géométrie algébrique {IV}. Étude locale des schémas et des morphismes de schémas},
  journal = {Publ. Math. Inst. Hautes \'Etudes Sci.},
  year    = {1964},
  note    = {Vol. 20 (1964), 5--259; Vol. 24 (1965), 5--231; Vol. 28 (1966), 5--255; Vol. 32 (1967), 5--361},
  pages = {}
}

@book{SGA1,
 editor = {Grothendieck, A.},
 title = {S{\'e}minaire de g{\'e}om{\'e}trie alg{\'e}brique du {Bois} {Marie} 1960-61. {Rev{\^e}tements} {\'e}tales et groupe fondamental ({SGA} 1). {Un} s{\'e}minaire dirig{\'e} par {Alexander} {Grothendieck}. {Augment{\'e}} de deux expos{\'e}s de {M}. {Raynaud}.},
 edition = {{\'E}dition recompos{\'e}e et annot{\'e}e du original publi{\'e} en 1971 par {Springer}},
 fseries = {Documents Math{\'e}matiques},
 series = {Doc. Math. (SMF)},
 issn = {1629-4939},
 volume = {3},
 isbn = {2-85629-141-4},
 year = {2003},
 publisher = {Paris: Soci{\'e}t{\'e} Math{\'e}matique de France}
}

@article{Ji,
 author = {Ji, Zhuchao},
 title = {Structure of {Julia} sets for post-critically finite endomorphisms on {{\(\mathbb{P}^2\)}}},
 journal = {Math. Ann.},
 issn = {0025-5831},
 volume = {384},
 number = {1-2},
 pages = {857--880},
 year = {2022}
}

@article{Jonsson,
 author = {Jonsson, Mattias},
 title = {Some properties of 2-critically finite holomorphic maps {{\(\mathbb{P}^2\)}}},
 journal = {Ergodic Theory Dyn. Syst.},
 volume = {18},
 number = {1},
 pages = {171--187},
 year = {1998}
}

@article{Kawamata1980,
 author = {Kawamata, Yujiro},
 title = {On {Bloch}'s conjecture},
 journal = {Invent. Math.},
 volume = {57},
 number = {1},
 pages = {97--100},
 year = {1980}
}

@article{Kiernan,
 author = {Kiernan, Peter},
 title = {On the relations between taut, tight and hyperbolic manifolds},
 journal = {Bull. Am. Math. Soc.},
 volume = {76},
 pages = {49--51},
 year = {1970}
}

@book{Kobayashi1998,
 author = {Kobayashi, Shoshichi},
 title = {Hyperbolic Complex Spaces},
 series = {Grundlehren der mathematischen Wissenschaften},
 volume = {318},
 publisher = {Springer-Verlag},
 address = {Berlin},
 year = {1998}
}

@article{Koch,
 author = {Koch, Sarah},
 title = {Teichm{\"u}ller theory and critically finite endomorphisms},
 journal = {Adv. Math.},
 issn = {0001-8708},
 volume = {248},
 pages = {573--617},
 year = {2013}
}

@misc{KollarXu,
 author = {J{\'a}nos Koll{\'a}r and Chenyang Xu},
 title = {Fano {Varieties} with {Large} {Degree} {Endomorphisms}},
 year = {2009},
 howpublished = {Preprint, {arXiv}:0901.1692 [math.{AG}] (2009)},
 url = {https://arxiv.org/abs/0901.1692},
 arXiv = {arXiv:0901.1692}
}

@article{MengZhong,
 author = {Meng, Sheng and Zhong, Guolei},
 title = {Rigidity of rationally connected smooth projective varieties from dynamical viewpoints},
 journal = {Math. Res. Lett.},
 volume = {30},
 number = {2},
 pages = {589--610},
 year = {2023}
}

@article{MengZhangPolarized,
 author = {Meng, Sheng and Zhang, De-Qi},
 title = {Building blocks of polarized endomorphisms of normal projective varieties},
 journal = {Adv. Math.},
 volume = {325},
 pages = {243--273},
 year = {2018}
}

@article{MengZhang,
 author = {Meng, Sheng and Zhang, De-Qi},
 title = {Kawaguchi-{Silverman} conjecture for certain surjective endomorphisms},
 journal = {Doc. Math.},
 volume = {27},
 pages = {1605--1642},
 year = {2022}
}

@article{Nakayama,
 author = {Nakayama, Noboru},
 title = {Intersection sheaves over normal schemes},
 journal = {J. Math. Soc. Japan},
 volume = {62},
 number = {2},
 pages = {487--595},
 year = {2010}
}

@article{MYY25,
 author = {Moraga, Joaqu{\'{\i}}n and Yeong, Wern and Y{\'a}{\~n}ez, Jos{\'e} Ignacio},
 title = {Polarized endomorphisms of {Fano} varieties with complements},
 journal = {Int. Math. Res. Not.},
 volume = {2025},
 number = {8},
 pages = {18},
 year = {2025}
}

@article{RongFatou,
 author = {Rong, Feng},
 title = {The {Fatou} set for critically finite maps},
 journal = {Proc. Am. Math. Soc.},
 volume = {136},
 number = {10},
 pages = {3621--3625},
 year = {2008}
}

@article{Ueda,
 author = {Ueda, Tetsuo},
 title = {Critical orbits of holomorphic maps on projective spaces},
 journal = {J. Geom. Anal.},
 volume = {8},
 number = {2},
 pages = {319--334},
 year = {1998}
}

@article{Van2,
 author = {Le, Van Tu},
 title = {Periodic points of weakly post-critically finite all the way down maps},
 journal = {Ann. Mat. Pura Appl. (4)},
 volume = {202},
 number = {3},
 pages = {1187--1195},
 year = {2023}
}

@article{Van1,
 author = {Le, Van Tu},
 title = {Periodic points of post-critically algebraic holomorphic endomorphisms},
 journal = {Ergodic Theory Dyn. Syst.},
 volume = {42},
 number = {7},
 pages = {2382--2414},
 year = {2022}
}

@article{Vojta1999,
 author = {Vojta, Paul},
 title = {Integral points on subvarieties of semiabelian varieties. {II}},
 journal = {Amer. J. Math.},
 volume = {121},
 number = {2},
 pages = {283--313},
 year = {1999}
}

@article{Wu,
 author = {Wu, Hung-Hsi},
 title = {Normal families of holomorphic mappings},
 journal = {Acta Math.},
 issn = {0001-5962},
 volume = {119},
 pages = {193--233},
 year = {1967}
}

@article{Yamanoi2019,
 author = {Yamanoi, Katsutoshi},
 title = {Pseudo {Kobayashi} hyperbolicity of subvarieties of general type on abelian varieties},
 journal = {J. Math. Soc. Japan},
 volume = {71},
 number = {1},
 pages = {259--298},
 year = {2019}
}

@article{YamanoiAbelian,
 author = {Yamanoi, Katsutoshi},
 title = {Kobayashi hyperbolicity of the complements of ample divisors in abelian varieties},
 journal = {Publ. Res. Inst. Math. Sci.},
 issn = {0034-5318},
 volume = {58},
 number = {2},
 pages = {281--309},
 year = {2022}
}

@article{Yamanoi2026,
 author = {Yamanoi, Katsutoshi},
 title = {{Bloch}'s principle for holomorphic maps into subvarieties of semi-abelian varieties},
 journal = {Publ. Res. Inst. Math. Sci.},
 volume = {62},
 number = {3},
 pages = {455--628},
 year = {2026},
 doi = {10.4171/PRIMS/62-3-1}
}

\end{document}